\documentclass[11pt]{article}

\usepackage[english]{babel}
\usepackage{amsmath,amssymb,amsfonts,amsthm,mathrsfs,bbm}
\usepackage{graphicx,xcolor}
\usepackage{enumerate,geometry,marginnote,afterpage}
\usepackage{fancyhdr,courier,type1cm,dsfont,float,tikz}
\usepackage{hyperref}
\usetikzlibrary{arrows, positioning}

\numberwithin{equation}{section}

\theoremstyle{plain} 

\newtheorem{theorem}{Theorem}[section]
\newtheorem{corollary}[theorem]{Corollary}
\newtheorem{lemma}[theorem]{Lemma}

\newtheorem{definition}[theorem]{Definition}

\newtheorem{remark}[theorem]{Remark}

\newcommand{\Z}{\mathbb Z}

\newcommand{\N}{\mathbb N}
\newcommand{\R}{\mathbb R}

\newcommand{\dd}{\mathrm{d}}
\newcommand{\ee}{\mathrm{e}}

\newcommand{\EE}{\mathbb E}
\newcommand{\PP}{\mathbb{P}}
\newcommand{\Tt}{\mathcal{T}}

\newcommand{\tX}{\tilde{X}}

\newcommand{\cD}{\mathcal{D}}

\makeatletter
\renewcommand{\title}[1]{\gdef\@title{\LARGE #1}}
\makeatother

\begin{document}

\title{Large deviation principle for friendship-biases\\ 
in Galton--Watson trees}

\author{
\renewcommand{\thefootnote}{\arabic{footnote}}
Frank den Hollander
\footnotemark[1]
\\
\renewcommand{\thefootnote}{\arabic{footnote}}
Azadeh Parvaneh
\footnotemark[2]
}

\footnotetext[1]{
Mathematical Institute, Leiden University, Einsteinweg 55, 2333 CC Leiden, The Netherlands.\\
\emph{email}: denholla@math.leidenuniv.nl
}

\footnotetext[2]{
Faculty of Technology, Bielefeld University, 33501 Bielefeld, Germany.\\
\emph{email}: azadeh.parvaneh@uni-bielefeld.de
}

\maketitle

\begin{abstract}
In this paper we consider the friendship-bias of the vertices in an infinite rooted Galton--Watson tree. The friendship-bias of a vertex is the difference between the average degree of the neighbours of the vertex and the degree of the vertex itself. A vertex is said to be of type $\chi \in S$, with $S = \{-,0,+\}$, when its friendship-bias is, respectively, strictly negative, zero or strictly positive. We consider the fractions $f_l^\chi$ of vertices of type $\chi \in S$ along a random downward path up to branching depth $l \in \N$ and derive a large deviation principle (LDP) for the triple $(f_l^\chi)_{\chi \in S}$ as $l\to\infty$. The branching depth of a vertex counts the number of branchings that occur along the path that connects the vertex to the root of the tree. The rate in the LDP is $l$, while the rate function in the LDP is identified in terms of a variational formula minimising a relative entropy under a linear constraint. 

We focus on the case of binary branching, for which the rate function is already quite involved. We identify the qualitative properties of the rate function and show how it can be computed numerically. We briefly indicate how to proceed for more general branching and for vertex types along a tree consisting of a finite number of random downward paths. Our paper is the first to consider large deviations of vertex types. 

\medskip\noindent
{\it AMS} 2020 {\it subject classifications.}
60F10, 
60J80, 
60C05. 

\medskip\noindent
{\it Key words and phrases.} Galton--Watson trees; friendship-bias; vertex types; large deviation principle; rate function.

\medskip\noindent
{\it Acknowledgment.} The work in this paper was supported by the Netherlands Organisation for Scientific Research (NWO) through Gravitation-grant NETWORKS-024.002.003. FdH was supported by the National Science Foundation (NSF) under Grant No.\ DMS-1928930 while in residence at the Simons Laufer Mathematical Sciences Institute in Berkeley, California, USA during the Spring 2025 semester. AP received funding from the European Union's Horizon 2020 research and innovation programme under the Marie Sk\l odowska-Curie grant agreement Grant Agreement No.\ 101034253. The authors thank Rajat Hazra and Nelly Litvak for fruitful discussions.

\vspace{0.1cm}
\hfill\includegraphics[scale=0.1]{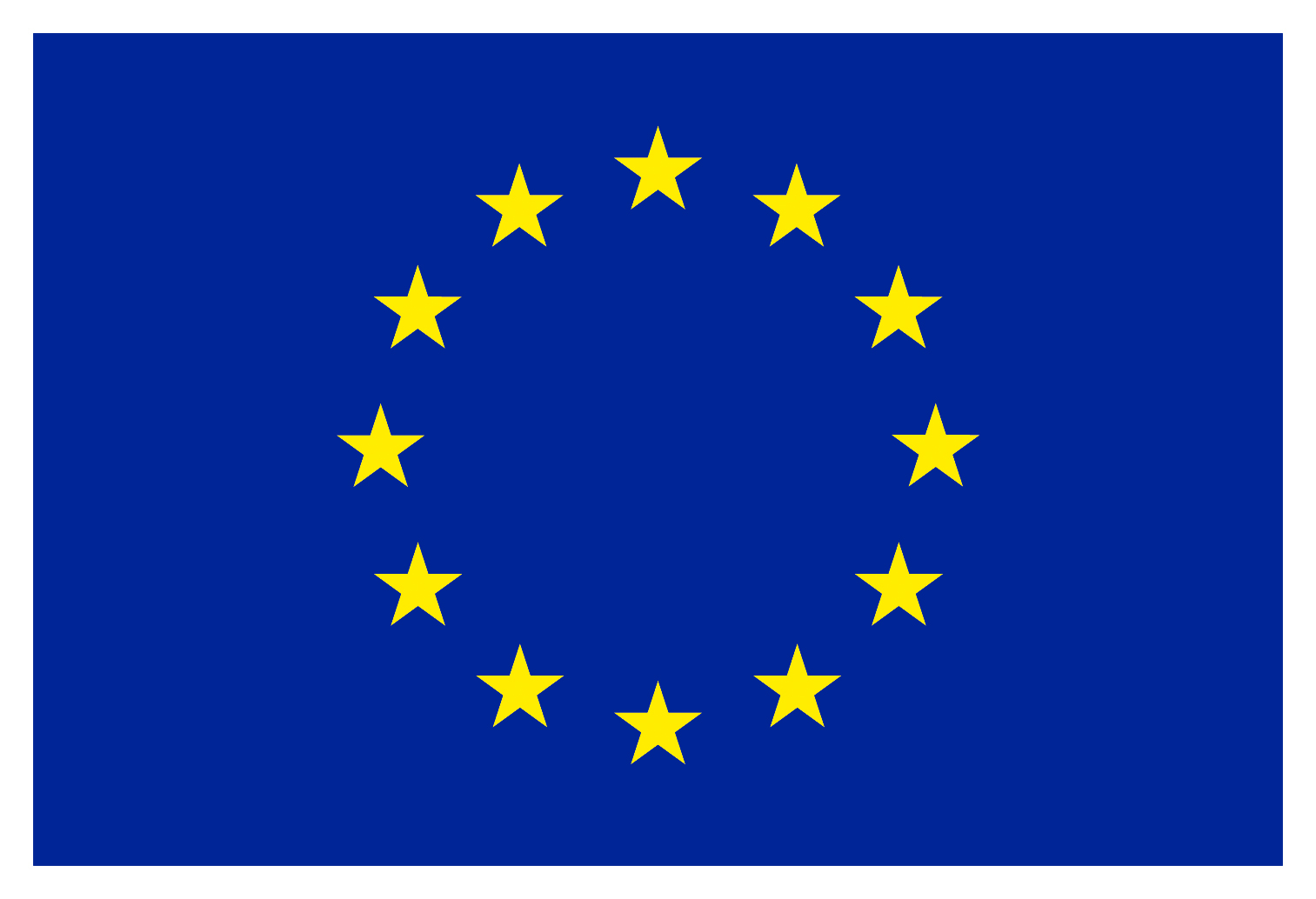}

\end{abstract}


\newpage 

\small
\tableofcontents
\normalsize


\section{Introduction}

In this paper we consider the friendship-bias of the vertices in an infinite rooted Galton--Watson tree. Our goal is to derive an LDP for the fractions of vertices in a random downward path whose friendship-bias is, respectively, strictly negative, zero or strictly positive. Our further goal is to identify the qualitative properties of the associated rate function, and show how the rate function can be computed numerically. Our main focus will be on binary branching, but we do indicate how to deal with more general offspring laws of the Galton--Watson tree and with more general random downward paths. 


\subsection{Friendship-bias}

Let $G_n$ be a \emph{finite undirected graph} with $n$ vertices, labelled by $u \in [n] = \{1,\ldots,n\}$. Define the friendship-bias of vertex $u$ as
\[
\Delta_{u,n} = \left(\frac{1}{d_u} \sum_{v \in [n]} A_{uv} d_v - d_u\right) \,\mathbbm{1}_{\{d_u \neq 0\}}, 
\]
where $A=(A_{uv})_{u,v\in [n]}$ is the adjacency matrix of $G_n$, i.e., $A_{uv}$ is the number of edges between $u \neq v$ and $A_{uu}$ is twice the number of self-loops at $u$, and $d_u = \sum_{v \in [n]} A_{uv}$ is the degree of $u$. Then the friendship paradox says that
\[
\frac{1}{n} \sum_{u \in [n]} \Delta_{u,n} \geq 0,
\] 
with equality if and only if all connected components of $G_n$ are regular. Typically, $G_n$ contains vertices of all three types, i.e., $u$ for which $\Delta_{u,n}$ is $>0$, $=0$, $<0$. Even though the average is non-negative, there are graphs for which the number of vertices $u$ with $\Delta_{u,n} < 0$ is larger than the number of vertices $u$ with $\Delta_{u,n} > 0$. The goal of the present paper is to derive an LDP for the fractions of such vertices in \emph{infinite Galton--Watson trees}. For background on the friendship paradox and references to the relevant literature, we refer to \cite{HHP1,HHP2,HHLP}. Mathematically, the friendship paradox has so far received only modest attention, and a proper \emph{quantification} of friendship-biases in large-size networks is still lacking.  


\subsection{Earlier work and present paper}


\paragraph{Large deviations on random trees.}

Large deviations for functions of random trees are technically challenging because of the underlying branching structure. An example is the work in \cite{DGPZ}, which establishes large deviation principles under the quenched law and the annealed law for the scaled height of a random walk on a supercritical Galton--Watson tree and shows that the associated rate functions coincide almost surely. The analysis crucially exploits the branching structure of the tree, together with a version of the ubiquity lemma for Galton--Watson trees introduced in \cite{GK}, which roughly states that any non-rare local property almost surely occurs in subtrees rooted at a positive fraction of the vertices along every ray of the tree (see \cite[Lemma 2.2]{DGPZ}).
 
In \cite{DMS}, a large deviation principle is proved for the empirical subtree measures of multitype Galton--Watson trees, where an empirical subtree measure records the law of rooted subtrees obtained by viewing the process from each vertex. For this purpose, a tree-adapted notion of shift-invariance is introduced for probability measures on trees, with a shift given by rerooting of the tree at a child and passing to the corresponding progeny subtree. The study in \cite{BMS} establishes a large deviation principles for the number of leaves in preferential attachment graphs and random recursive trees, as well as for the number of cherries in Yule trees.

In \cite{BB1,BB2}, large deviation principles are proved for population sizes in branching processes in random environments, viewed as a generalisation of Galton--Watson processes in which the offspring law is randomly selected at each generation. The first paper analyses both downward and upward large deviations and derives the lower-deviation bound by exhibiting a scenario in which the branching process evolves typically under an atypical sequence of environments. The second paper investigates upward large deviations in the presence of possibly heavy-tailed offspring laws and derives an upper-deviation bound also for Galton--Watson processes with heavy-tailed offspring laws.


\paragraph{The friendship paradox on random trees.}

In \cite{HHP1}, a quantification of the friendship paradox is carried out for sparse locally tree-like random graphs, and introduces the notion of \emph{significance} of the friendship paradox. Specifically, the friendship paradox is said to be asymptotically significant for a sequence of finite random graphs $\{G_n\}_{n\in\N}$ labelled by the number of vertices $n$ if the proportion of vertices with non-negative friendship-bias converges in probability as $n\to\infty$ to a number $\geq \tfrac{1}{2}$. Interestingly, we found that the friendship paradox is significant for the homogeneous Erd\H{o}s--R\'enyi random graph, the inhomogeneous Erd\H{o}s--R\'enyi random graph, as well as for several classes of configuration random graphs and preferential attachment random graphs (see \cite{vdH1,vdH2} for a description).  

In \cite{HHP2}, the \emph{multi-level} friendship paradox is analysed, which addresses the friendship paradox at higher levels of friendship. More specifically, by extending the notion of friendship-bias to a multi-level setting, using random walks on graphs and imposing additional constraints on the graph, we showed that the $k$-level average friendship-bias is non-negative for each fixed $k$, regardless of whether $k$-level friends are explored via backtracking or non-backtracking random walks. 

In \cite{HHLP}, a probabilistic framework is developed for understanding and analysing the friendship paradox on \emph{trees}. We derived sharp results on the density and the correlation of the three types of vertices in tree environments that arise naturally in real-world networks.

In \cite{BGHJS}, the friendship-bias is generalised to different types of subgraphs, i.e., instead of monitoring the number of edges a vertex is part of the focus is on the number of wedges, triangles, etc. It is shown that the \emph{generalised friendship paradox} holds for wedges, but not for triangles. A detailed analysis is given of the average friendship-bias for several classes of sparse and dense random graphs. In \cite{HV}, it is shown that the generalised friendship paradox holds for a class of centrality measures, including PageRank and Betweenness. 


\paragraph{Large deviations for vertex types on random trees.}

Establishing large deviation principles for quantities reflecting the friendship-bias inherently involves handling information on degrees in three generations of the tree simultaneously. In the present paper, we consider the fractions of vertices of a given type (strictly negative, zero or strictly positive) along a random ray (i.e., a random downward path from the root) up to a given branching depth, derive a large deviation principle (LDP) for the triple of these fractions, and identify the qualitative properties of the associated rate function as a function of the offspring law of the Galton--Watson tree. We focus on the case of \emph{binary branching}, for which the rate function is already quite complex, and indicate how to proceed for more general branching.

Our paper is the first to consider large deviations of vertex types. In order to derive the LDP, a number of hurdles have to be overcome, which we deal with as we go along. In particular, the exploration is captured in terms of a certain \emph{algorithm} that allows us to express the fractions of vertex types in terms of a certain Markov chain, and the large deviations of this Markov chain have to be analysed subject to certain \emph{constraints}. Other forms of exploration, different from the exploration up to a given branching depth considered here, are possible too, such as exploration up to a given generation. However, these tend to be more difficult to handle and we will not pursue them here.  


\subsection{Outline}

The remainder of this paper is organised as follows. In Section~\ref{sec:exploration} we focus on Galton--Watson trees with a binary offspring law, i.e., each vertex has an offspring of size either one or two. We introduce a pathwise exploration of the tree that keeps track of the branching depth and the type of the vertices it encounters, where the branching depth of a vertex counts the number of branchings that occur along the path that connects the vertex to the root of the tree. We state a lemma that identifies the vertex type in terms of the exploration, and formulate a recursion that counts the number of vertices of a given type along the path up to a given branching depth. We use this lemma to state two theorems that establish an LDP for these numbers and identify the qualitative properties of the associated rate function. The proofs of the lemma and the two theorems are given in Section~\ref{sec:proofs}. 

At the end of Section~\ref{sec:exploration} we briefly indicate how to go beyond binary branching. We do not pursue this extension in detail, because the rate function for binary branching is already quite involved. At the end of Section~\ref{sec:proofs} we show how the rate function for binary branching can be computed numerically.


\section{Pathwise exploration of the tree}
\label{sec:exploration}

In this section we consider a Galton--Watson tree with a \emph{binary} offspring law. In order to control the large deviations of the vertex types, we explore the tree along a \emph{random path} that starts from the root and ends at a vertex with a given \emph{branching depth}. Section~\ref{sec:AP-not} introduces notation, Section~\ref{sec:vertextypes} characterises the vertex types, Section~\ref{sec:expl} defines the exploration, Section~\ref{sec:typecount} counts the vertex types along the exploration path, while Section~\ref{sec:LDP} states our main results: a theorem that states an LDP for the fraction of vertex types up to a given branching depth and a theorem that identifies the quantitative properties of the associated rate function. In Section \ref{sec:comparison} we compare the typical fractions for our exploration according to branching depth with those found earlier for exploration according to generation, and show that they are \emph{different}. In Section~\ref{sec:beyondbinary} we briefly indicate how to move beyond binary branching.  


\subsection{Notation}
\label{sec:AP-not}

Let $(\Tt,\phi)$ be an infinite rooted tree with vertex set $V(\Tt)$ and root vertex $\phi$. To label the vertices, use the Ulam--Harris method of labelling a tree: each vertex $u \in V(\Tt) \setminus \{\phi\}$ is labelled by a finite word $\phi\, u^1 \ldots u^k$, where $k \in \N$, and, for $u \in V(\Tt)$, the $j$th offspring of $u$ is
\[
u_{j} = uj = \left\{
\begin{array}{ll}
\phi\, j, &\text{if $u = \phi$},\\[0.2cm]
\phi\, u^{1}\ldots u^{k} j, &\text{if $u = \phi\, u^{1} \ldots u^{k}$}.
\end{array} 
\right. 
\]
Denote the parent of $u \in V(\Tt) \setminus \{\phi\}$ by $u_{-1}$. Let $X_u$ be the number of \emph{offspring} of $u$. 

For $u, v \in V(\Tt)$, write $u\sim v$ to indicate that vertex $u$ is adjacent to vertex $v$. The edge set of $\Tt$ is $E(\Tt) = \{\{u,v\}\colon\, u,v\in V(\Tt), u \sim v\}$. Let $d_u$ be the degree of a vertex $u$. Define the \textit{friendship-bias} of $u$ as
\[
\Delta_{u} = \frac{1}{d_u} \sum_{\substack{v \in V(\Tt) \\ v \sim u}} d_v - d_u,
\]
which represents the difference between the average degree of the neighbours of $u$ and the degree of $u$ itself. Throughout the paper we follow the natural convention of defining the empty sum as $0$, in particular, the friendship-bias of the root in a trivial tree is $0$. A vertex is said to be \textit{positive} when its friendship-bias is strictly positive, \textit{negative} when its friendship-bias is strictly negative, and \textit{neutral} otherwise. 

For $i\in\Z$, abbreviate $\N_i = \{i,i+1,\ldots\}$. Write $\N=\N_1$. Let $\Tt_\infty$ denote the infinite Galton--Watson tree rooted at a single ancestor $\phi$, defined on a probability space $(\Omega, \mathscr{F},\PP)$, with offspring law $(p_k)_{k\in\N_0}$ satisfying
\[
p_0=0, \qquad p_1=p\in (0,1), \qquad p_{1}+\cdots+p_{m}=1 \text{ for some } m\in\N_{2}.
\]
In what follows we restrict to $m=2$, i.e., binary branching with $p_1 = p$ and $p_2 = 1-p$. 

Write
\[
\mathcal{S}_2=\{s = (s_\chi)_{\chi \in S} \in [0,1]^3\colon\, \sum_{\chi \in S} s_\chi=1\}
\]
to denote the two-dimensional probability simplex in $\R^3$, and equip it with the topology induced by the total variation distance
\[
d_{\mathrm{TV}}(s,s') = \tfrac{1}{2}\|s-s'\|_1,
\]
which turns $\mathcal{S}_2$ into a Polish space.
For a Polish space $E$, we also write $\mathcal{P}(E)$ for the space of probability measures on $E$, endowed with the \textit{weak topology} in the sense of the topology of weak convergence of probability measures. Throughout the sequel, whenever we say that a family of random variables satisfies an LDP, we actually mean that the family of \emph{laws} of these random variable satisfies that LDP.


\subsection{Characterisation of the vertex types}
\label{sec:vertextypes}

To describe the local structure of $\Tt_\infty$, we classify its non-root vertices into three categories: branching points, linear points, and branching-linear points (see Figure~\ref{fig:tree-structure}). The root is considered separately, but it may itself fall into the class of branching points.

\begin{definition}{\bf [Vertex types]}
$\mbox{}$
\begin{itemize}
\item[{\rm (a)}] 
{\rm A vertex $u \in V(\Tt_\infty)$ is called a \textit{branching point} if $X_u = 2$.}
\item[{\rm (b)}] 
{\rm A vertex $u \in V(\Tt_\infty) \setminus \{\phi\}$ with $d_u = 2$ is called a \textit{linear point} if its parent is not a branching point, and a \textit{branching-linear point} if its parent is a branching point.}
\item[{\rm (c)}] 
{\rm A \emph{linear piece} from a vertex $v$ is the maximal path
\[
v = v_0 \to v_1 \to \cdots \to v_{\ell(v)} = \tau(v),
\]
where the terminal vertex $\tau(v)$ is the first branching point on the path, i.e. $X_{\tau(v)} = 2$. The random integer $\ell(v)\in\mathbb N_0$ is called the \emph{length} of the linear piece from $v$.}
\end{itemize}
\end{definition}

\begin{figure}[htbp]
\centering
\tikzset{
branching/.style={circle, draw=black, fill=black!60, inner sep=3pt},
linear/.style={circle, draw=black, fill=white, inner sep=3pt},
blpoint/.style={circle, draw=black, fill=gray!50, inner sep=3pt},
cont/.style={draw=none, fill=none},
root/.style={circle, draw=black, thick, fill=white, inner sep=0.8pt}
}
\begin{tikzpicture}[level distance=1.0cm, sibling distance=2.0cm, every child/.style={line width=0.6pt}]
\node[root] (phi) {$\phi$}
child {node[linear] {}
child {node[linear] {}
child {node[branching] {}
child {node[blpoint] {}
child {node[linear] {}
child {node[branching] {}
child {node[cont] {$\cdots$}}
child {node[cont] {$\cdots$}}
}
}
}
child {node[branching] {}
child {node[cont] {$\cdots$}}
child {node[cont] {$\cdots$}}
}
}
}
};
\end{tikzpicture}
\caption{\small A schematic tree illustrating linear points (white), branching points (dark gray), and branching-linear points (light gray) in a tree with binary offspring law. The structure continues infinitely beyond the dotted nodes.}
\label{fig:tree-structure}
\end{figure}

The following lemma, which is proved in Section~\ref{sec:ProofLemma}, characterises the possible types of vertices in $\Tt_{\infty}$. See Fig.\ \ref{fig:tree-structure} for an illustration.

\begin{lemma}{\bf [Type characterisation lemma]}
\label{lem1}
Consider a Galton--Watson tree with the offspring law $(p_k)_{k\in\N_0}$ satisfying $p_1 =p\in (0,1)$ and $p_2= 1 -p$.
\begin{itemize}
\item[\rm{(a)}] 
The root can only be either positive or neutral. It is neutral if and only if it is a branching point and its two children are branching-linear points.
\item[\rm{(b)}] 
A linear point is negative if and only if it is the child of the root and is not the parent of a branching point. It is positive if and only if it is the parent of a branching point and is not the child of the root.
\item[\rm{(c)}] 
A branching-linear point is neutral if and only if its parent is the root and its child is a linear point, otherwise, it is positive.
\item[\rm{(d)}] 
A branching point that is not the root is either negative or neutral. It is neutral if and only if its parent and both of its children are also non-root branching points.
\end{itemize}
\end{lemma}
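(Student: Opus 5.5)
The proof is a direct case analysis on degrees. In $\Tt_\infty$ with binary branching, every non-root vertex $u$ has $d_u = 1 + X_u \in \{2,3\}$, and the root has $d_\phi = X_\phi \in \{1,2\}$. Since $\Delta_u = \frac{1}{d_u}\sum_{v\sim u} d_v - d_u$, the sign of $\Delta_u$ equals the sign of $\sum_{v\sim u} d_v - d_u^2$. So the plan is to list, for each class of vertex, the possible degrees of its neighbours and compare their sum with $d_u^2$. The neighbours of a non-root $u$ are its parent $u_{-1}$ and its children. A child $w$ of $u$ is a non-root vertex, so $d_w \in\{2,3\}$, with $d_w=3$ exactly when $w$ is a branching point. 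The parent has degree $X_\phi\in\{1,2\}$ if it is the root and degree $1+X_{u_{-1}}\in\{2,3\}$ otherwise.

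\emph{Root (a).} If $X_\phi=1$, then $d_\phi^2=1$ while the unique child has degree at least $2$, so the root is positive. If $X_\phi=2$, we compare $d_{\phi1}+d_{\phi2}$ with $4$. Each child degree is at least $2$, so $\Delta_\phi\ge 0$, with equality iff both children have degree $2$. A child of a branching root has degree $2$ exactly when it is a branching-linear point, which gives (a).

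\emph{Linear and branching-linear points (b), (c).} Here $d_u=2$, so we compare $d_{u_{-1}}+d_{u1}$ with $4$, where $d_{u1}\in\{2,3\}$.
\begin{itemize}
\item For a linear point, the parent is not branching. So $d_{u_{-1}}=1$ if the parent is the root (with $X_\phi=1$), and $d_{u_{-1}}=2$ otherwise. If the parent is the root, the sum is $1+2<4$ or $1+3=4$; thus $u$ is negative iff its child is not branching, and neutral otherwise. If the parent is not the root, the sum is $2+2=4$ or $2+3>4$; thus $u$ is positive iff its child is branching.
\item For a branching-linear point, the parent is branching. So $d_{u_{-1}}=2$ if the parent is the root, and $3$ otherwise. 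If the parent is not the root, the sum is at least $3+2>4$, so $u$ is positive. If the parent is the root, the sum is $2+d_{u1}$, which is neutral iff the child is linear (degree $2$) and positive otherwise.
\end{itemize}
This yields (b) and (c).

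\emph{Non-root branching points (d).} Here $d_u=3$, so we compare $d_{u_{-1}}+d_{u1}+d_{u2}$ with $9$. Every term is at most $3$, so $\Delta_u\le0$. Equality requires all three terms to equal $3$. For the children this means both are branching points. For the parent, degree $3$ means it is a non-root branching point, since a branching root has degree only $2$.

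The only delicate step is the bookkeeping at the root: its degree is $X_\phi$ rather than $X_\phi+1$. This shifts every comparison involving the root or its children by one, and so drives all the "child of the root" exceptions in (b)–(d). I would handle it by always writing $d_{u_{-1}}$ explicitly in the two cases, parent is $\phi$ versus parent is not $\phi$, before comparing with $d_u^2$. The rest is arithmetic.
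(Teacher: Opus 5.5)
Your proof is correct and takes essentially the same route as the paper: a direct case analysis on the degrees of the parent and children of each vertex class. In both arguments the root's degree $X_\phi$ (rather than $1+X_\phi$) produces the exceptional cases. The only difference is that you compare $\sum_{v\sim u} d_v$ with $d_u^2$ instead of evaluating $\Delta_u$ explicitly, and that is cosmetic.
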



\subsection{Exploration along a random path}
\label{sec:expl}

We next describe an exploration of the Galton--Watson tree via a random path along successive linear pieces. For this purpose we generate a sequence of \emph{active} vertices $(\phi_j)_{j\in\N}$, and for each $\phi_j$ denote by
\[
\ell_j = \ell(\phi_j), \qquad \tau_j=\tau(\phi_j),
\]
the length and the terminal vertex of the corresponding linear piece. At $\tau_j$ we choose one of its two children \emph{uniformly at random} and declare it to be the next active vertex $\phi_{j+1}$, while the other child becomes passive $\phi_{j+1}^\prime$. Starting from the root $\phi=\phi_1$ as the first active vertex, we thus construct a growing sequence of random paths
\[
\mathcal{T}_{1} \subseteq \mathcal{T}_{2} \subseteq \,\cdots \, (\subseteq \mathcal{T}_{\infty}).
\]
From each passive vertex $\phi_j^\prime$ we subsequently explore the associated linear piece, of length $\ell_j^\prime=\ell(\phi_j^\prime)$, until we reach its terminal vertex $\tau_j^\prime=\tau(\phi_j^\prime)$. The passive vertices $(\phi_j^\prime)_{j\in\N_2}$ and their descendants are viewed as \emph{ghosts}, in the sense that they do not belong to the active path.

We next describe the exploration algorithm that generates the random path together with the associated ghost vertices and edges.


\paragraph{EXPLORATION ALGORITHM:}

\begin{enumerate}[(1)]
\item 
\textbf{Initialisation.} 
Set $\phi_1=\phi$ (the root). Consider $\Tt_0$ as a path with only one vertex $\phi_1$. Put $j=1$.	
\item \textbf{Step \boldmath{$j$}:}
\begin{enumerate}[(2-1)]  
\item 
\textbf{Active exploration.} 
Starting from $\phi_j$, follow the linear piece until reaching the first branching vertex $\tau_j=\tau(\phi_j)$. Record $\ell_j=\ell(\phi_j)$. At $\tau_j$ let the two children be $\{\tau_{j,1},\tau_{j,2}\}$. Choose one of them uniformly at random (defined on the same probability space $(\Omega,\mathscr{F},\PP)$ and independent of the tree and of the past exploration) and declare it to be the next active vertex $\phi_{j+1}$, while the other becomes the next passive vertex $\phi_{j+1}^\prime$, i.e.,
\[
\phi_{j+1}=
\begin{cases}
\tau_{j,1}, & \text{with probability } \tfrac12\\
\tau_{j,2}, & \text{with probability }\tfrac1 2
\end{cases},
\qquad 
\phi_{j+1}^\prime= \{\tau_{j,1},\tau_{j,2}\}\setminus\{\phi_{j+1}\}.
\]
Add to the explored path $\mathcal{T}_{j-1}$ all vertices and edges of the linear piece starting at $\phi_j$, together with the new active vertex $\phi_{j+1}$ and the edge connecting it to $\tau_j$. The resulting graph is the new path $\mathcal{T}_{j}$.
\item 
\textbf{Passive exploration (if \boldmath{$j\ge2$}).} 
Starting from $\phi_{j}^\prime$, follow the linear piece and connecting edges until reaching $\tau_{j}^\prime=\tau(\phi_j^\prime)$. Record $\ell_j^\prime=\ell(\phi_j^\prime)$. 
All vertices and edges discovered here are \emph{ghosts} and do not enlarge $\mathcal T_j$.
\end{enumerate}
\item 
\textbf{Iteration.} 
Increase $j$ by $1$ and return to (2).
\end{enumerate}


\subsection{Counting process for the vertex types} 
\label{sec:typecount}

We define three random variables that, at each step of the exploration, record the number of vertices of the three types encountered along the path. For $j\in\N$, let
\[
V_j = \big\{u \in V(\Tt_j) : \text{the type of $u$ is determined by step $j$}\big\}.
\]
For $\chi\in S$ we set
\[
N_j^\chi = \big|\big\{u\in V_j: \mathrm{sign}(\Delta_u)=\chi\big\}\big|, 
\qquad 
N_j = |V_j| = N_j^- + N_j^0 + N_j^+,
\]
where $\Delta_u$ denotes the friendship bias of $u$ in $\Tt_\infty$.

At each step $j\in\N$, all vertices of $V(\Tt_j)$ belong to $V_{j}$, except for possibly two vertices: the branching point $\tau_{j}$ and its offspring, $\phi_{j+1}$. The types of $\phi_{j+1}$ remain undetermined at this step, while by Lemma~\ref{lem1}(d), if $\ell_j>0$, then the branching point $\tau_{j}$ also belongs to $V_j$ and its type is negative. Here we note that $\phi_{j}$ is a branching point if and only if $\ell_{j}=0$. More precisely, at each step $j\in\N$, we proceed as follows. First, we examine the vertices $\phi_{j}$ and $\tau_j$ and determine their types whenever possible. Next, we assign the types of the linear points on the path from $\phi_j$ to $\tau_j$, if such points exist. Finally, for $j\in\N_2$, in the case $\ell_{j-1}=0$, where $\tau_{j-1}=\phi_{j-1}$, we determine the type of $\tau_{j-1}$ provided it has not already been specified. 

Note that $\{\ell_j\}_{j\in\N}$ and $\{\ell_j^\prime\}_{j\in\N_2}$ are two independent families of i.i.d.\ random variables. 
Each variable represents the number of consecutive vertices with exactly one child before a branching point is reached. 
Consequently, both $\ell_j$ and $\ell_j^\prime$ follow a geometric law on $\N_0$ with success parameter $1-p$, i.e.,
\[
\PP(\ell_j = k) = \PP(\ell_j^\prime = k) = p^k(1-p), \qquad k \in \N_0.
\]

The types of vertices are determined step by step from the values of $\ell_j$ (and, if necessary, $\ell_j'$) as follows:

\begin{itemize}
	\item \textbf{Step 1:}
	\begin{itemize}
	\item If $\ell_1=0$, then $\phi_{1}$ is itself the branching point $\tau_{1}$. Its type remains undetermined at this step.  
	\item  If $\ell_1>0$, then by Lemma~\ref{lem1}(a) $\phi_{1}$ is positive. 
	\item  If $\ell_1>0$, then by Lemma~\ref{lem1}(d) $\tau_{1}$ is negative.
	\item  If $\ell_1=2$, then by Lemma~\ref{lem1}(b) the child of the root $\phi_1$ is neutral. 
	\item  If $\ell_1>2$, then by Lemma~\ref{lem1}(b) the child of the root $\phi_1$ is negative, the parent of the first branching point $\tau_1$ is positive, and the remaining $\ell_1-3$ linear points on the path from $\phi_{1}$ to $\tau_{1}$ are neutral.
	\end{itemize}
\end{itemize}
Thus, at step $1$ we obtain
\begin{align*}
	&N_{1}^{-}=\mathbbm{1}_{\{\ell_{1}>0\}}+\mathbbm{1}_{\{\ell_{1}>2\}},\\
	&N_{1}^{0}=\ell_{1}-\mathbbm{1}_{\{\ell_{1}>0\}}-2\,\mathbbm{1}_{\{\ell_{1}>2\}},\\
	&N_{1}^{+}=\mathbbm{1}_{\{\ell_{1}>0\}}+\mathbbm{1}_{\{\ell_{1}>2\}}.
\end{align*}

\begin{itemize}
	\item \textbf{Step 2:}
	\begin{itemize}
	\item If $\ell_2=0$, then $\phi_{2}$ is itself the branching point $\tau_{2}$. By Lemma~\ref{lem1}(d), its type is negative when $\ell_{1}=0$, while for $\ell_{1}>0$ its type remains undetermined at this step.
	\item If $\ell_2>0$, then  by Lemma~\ref{lem1}(c), $\phi_{2}$ is neutral when $\{\ell_1=0,\ell_2>1\}$, and positive otherwise.
	\item If $\ell_2>0$, then by Lemma~\ref{lem1}(d) the new branching point $\tau_{2}$ is negative.
	\item If $\ell_2>1$, then by Lemma~\ref{lem1}(b) the parent of $\tau_2$ is positive, while the other $\ell_2-2$ linear points on the path from $\phi_{2}$ to $\tau_{2}$ are neutral.
	\item If $\{\ell_1=\ell_2=0\}$ or $\{\ell_1=0,\ell_2>0,\ell_2^\prime=0\}$, then by Lemma~\ref{lem1}(a) the root $\tau_{1}=\phi_1$ is positive.  
	\item If $\{\ell_1=0,\ell_2>0,\ell_2^\prime>0\}$, then by Lemma~\ref{lem1}(a) the root $\tau_1=\phi_1$ is neutral.  
	\end{itemize}
\end{itemize}
Thus, at step $2$ we obtain
\begin{align*}
	&N_{2}^{-}=N_{1}^{-}+\mathbbm{1}_{\{\ell_{1}=0,\, \ell_{2}=0\}}+\mathbbm{1}_{\{\ell_{2}>0\}},\\
	&N_{2}^{0}=N_{1}^{0}+\mathbbm{1}_{\{\ell_{1}=0,\, \ell_{2}>1\}}+\ell_{2}-\mathbbm{1}_{\{\ell_{2}>0\}}-\mathbbm{1}_{\{\ell_{2}>1\}}+\mathbbm{1}_{\{\ell_{1}=0,\, \ell_{2}>0,\,\ell_2^\prime>0\}},\\
	&N_{2}^{+}=N_{1}^{+}+\mathbbm{1}_{\{\ell_{2}>0\}}-\mathbbm{1}_{\{\ell_{1}=0,\, \ell_{2}>1\}}+\mathbbm{1}_{\{\ell_{2}>1\}}+\mathbbm{1}_{\{\ell_{1}=0,\, \ell_{2}=0\}}+\mathbbm{1}_{\{\ell_{1}=0,\, \ell_{2}>0,\,\ell_2^\prime=0\}}.
\end{align*}

\begin{itemize}
	\item \textbf{Step $\boldsymbol{j\in\mathbb{N}_3}$:}    
	\begin{itemize}
	\item If $\ell_j=0$, then $\phi_{j}$ is itself the branching point $\tau_{j}$. Its type remains undetermined at this step. 
	\item If $\ell_j>0$, then by Lemma~\ref{lem1}(c) $\phi_j$ is positive. 
	\item If $\ell_j>0$, then by Lemma~\ref{lem1}(d) the new branching point $\tau_j$ is negative.  
	\item If $\ell_j>1$, then by Lemma~\ref{lem1}(b) the parent of $\tau_j$ is positive, while the other $\ell_j-2$ linear points on the path from $\phi_{j}$ to $\tau_{j}$ are neutral.
	\item For $j=3$, if $\{\ell_{j-2}>0,\ell_{j-1}=\ell_{j}=\ell_j^\prime=0\}$, then by Lemma~\ref{lem1}(d) the vertex $\tau_{j-1}=\phi_{j-1}$ is neutral.
	\item For $j=3$, if $\{\ell_{j-2}>0,\ell_{j-1}=0,\ell_j>0\}$ or $\{\ell_{j-2}>0,\ell_{j-1}=\ell_{j}=0,\ell_j^\prime>0\}$, then by Lemma~\ref{lem1}(d) the vertex $\tau_{j-1}=\phi_{j-1}$ is negative.
	\item For $j\in\N_4$, if $\{\ell_{j-1}=\ell_{j}=\ell_j^\prime=0\}$, then by Lemma~\ref{lem1}(d) the vertex $\tau_{j-1}=\phi_{j-1}$ is neutral.
	\item For $j\in\N_4$, if $\{\ell_{j-1}=0,\ell_j>0\}$ or $\{\ell_{j-1}=\ell_{j}=0,\ell_j^\prime>0\}$, then by Lemma~\ref{lem1}(d) the vertex $\tau_{j-1}=\phi_{j-1}$ is negative.
	\end{itemize}
\end{itemize}
Thus, at step~$j\in \N_3$ we obtain
\begin{equation*}
	\begin{aligned}
&N_j^- = N_{j-1}^{-} + \mathbbm{1}_{\{\ell_j>0\}}
		+ \begin{cases}
			\mathbbm{1}_{\{\ell_{j-2}>0,\, \ell_{j-1}=0,\, \ell_j>0\}}
			+ \mathbbm{1}_{\{\ell_{j-2}>0,\, \ell_{j-1}=0,\, \ell_j=0,\, \ell_j^\prime>0\}}, & j=3, \\[4pt]
			\mathbbm{1}_{\{\ell_{j-1}=0,\, \ell_j>0\}}
			+ \mathbbm{1}_{\{\ell_{j-1}=0,\,\ell_j=0,\, \ell_j^\prime>0\}}, & j\in\N_{4},
		\end{cases} \\[6pt]
&N_j^0 = N_{j-1}^0 + \ell_j-\mathbbm{1}_{\{\ell_{j}>0\}}-\mathbbm{1}_{\{\ell_{j}>1\}}
		+ \begin{cases}
			\mathbbm{1}_{\{\ell_{j-2}>0,\, \ell_{j-1}=0,\,\ell_j=0,\,\ell_j^\prime=0\}}, & j=3, \\[4pt]
			\mathbbm{1}_{\{\ell_{j-1}=0,\,\ell_j=0,\,\ell_j^\prime=0\}}, & j\in\N_{4},
		\end{cases} \\[6pt]
&N_j^+ = N_{j-1}^{+} + \mathbbm{1}_{\{\ell_j>0\}} + \mathbbm{1}_{\{\ell_j>1\}} .
	\end{aligned}
\end{equation*}

Note that the above expressions define a recursion formula for the triple $(N_j^-,N_j^0,N_j^+)$, which depends only on $\ell_j$ when $j=1$, on $(N_{j-1}^-,N_{j-1}^0,N_{j-1}^+)$ together with $(\ell_{j-1},\ell_j,\ell_j^\prime)$ when $j=2$, and on $(N_{j-1}^-,N_{j-1}^0,N_{j-1}^+)$ together with $(\ell_{j-2},\ell_{j-1},\ell_j,\ell_j^\prime)$ when $j\in\N_3$. Define
\[
\Pi_0 = (0,0,0), \qquad \Pi_j = (N_j^-,N_j^0,N_j^+), \quad j\in\N.
\]
By construction, there exists a deterministic function 
\[
g\colon\,\N_0^3\times\N_0^4\to\N_0^3
\]
such that
\[
\Pi_j = g(\Pi_{j-1},Y_j),
\]
where
\[
Y_1 = (0,0,\ell_1,0), \qquad 
Y_2 = (0,\ell_1,\ell_2,\ell_2^\prime), \qquad 
Y_j = (\ell_{j-2},\ell_{j-1},\ell_j,\ell_j^\prime), \quad j\in\N_3.
\]
Since $\{Y_j\}_{j\in\N}$ is a time-homogeneous Markov chain, the joint process $\{(\Pi_{j},Y_j)\}_{j\in\N}$ is also a time-homogeneous Markov chain.


\subsection{LDP for the vertex types along a random path up to a given branching depth}
\label{sec:LDP}


\paragraph{LDP.}

Our first main theorem, which is proved in Section~\ref{sec:ProofLDP1}, establishes an LDP for the sequence $\Upsilon = \{\Upsilon_j\}_{j\in\N_4}$ given by 
\[
\Upsilon_j = \left(\frac{N_j}{j}, \frac{\Pi_j}{N_j}\right) \in (0,\infty) \times \mathcal{S}_2.
\]
To state and derive the theorem we need an LDP for the empirical law of the extended Markov process $W=\{W_j\}_{j\in\N_4}$ given by
\[
W_j = \big((\ell_{j-2},\ell_{j-2}^\prime),(\ell_{j-1},\ell_{j-1}^\prime), (\ell_j,\ell_j^\prime)\big)\in E,
\]
where $E=(\N_0^2)^3$. We write $\zeta$ to denote the geometric law on $\N_0$ with success parameter $1-p$. 
 
\begin{theorem}{\bf [LDP for type fractions along a random path]}
\label{LDP1}
Under the joint law of the Galton--Watson tree $\mathcal{T}_\infty$ and the random exploration path, the family of random variables $\{\Upsilon_j\}_{j\in\N_4}$ satisfies the LDP in the Polish space $(0,\infty) \times \mathcal{S}_2$ with rate $j$ and with good rate function $J$ given by
\[
J(x,s) = \inf_{\nu \in \Phi(x,s)} I(\nu), \qquad x \in (0,\infty),\, s \in \mathcal{S}_2.
\]
Here,
\begin{itemize}
\item 
$I\colon\,\mathcal{P}(E) \to [0,\infty]$ is the good rate function given by
\[
I(\nu) =
\begin{cases}
H\left(\nu\,\middle\|\, \pi_{1,2}\nu \otimes \zeta^{\otimes 2}\right), & \text{if } \pi_{1,2}\nu=\pi_{2,3}\nu,\\[1ex]
\infty, & \text{otherwise,}
\end{cases}
\]
where $H(\cdot\, \|\, \cdot)$ denotes relative entropy and $\pi_{1,2}$, $\pi_{2,3}$ denote the projection onto the first two, respectively, the last two of the three coordinates.
\item
The constraint on $\nu$ is to the subset of $\mathcal{P}(E)$ given by
\[
\Phi(x,s) = \big\{\nu \in \mathcal{P}(E)\colon\,\|G(\nu)\|_1 = x,\,G(\nu)/\|G(\nu)\|_1 = s\big\},
\qquad x \in (0,\infty),\,s \in \mathcal{S}_2,
\]
where $G = (G^\chi)_{\chi \in S}$ with $G^\chi\colon\,\mathcal{P}(E) \to [0,\infty]$ is given by
\[
G^\chi(\nu) = \int_{E} h^\chi\,\mathrm{d}\nu
\]
with $h^\chi\colon\,E \to \N_0$ given by
\[
\begin{aligned}
h^{-}\big((a,a^\prime),(b,b^\prime),(c,c^\prime)\big)
&= \mathbbm{1}_{\{c>0\}}
+ \mathbbm{1}_{\{b=0,\, c>0\}}
+ \mathbbm{1}_{\{b=0,\,c=0,\, c^\prime>0\}}, \\[6pt]
h^{0}\big((a,a^\prime),(b,b^\prime),(c,c^\prime)\big)
&= c-\mathbbm{1}_{\{c>0\}}-\mathbbm{1}_{\{c>1\}}
+ \mathbbm{1}_{\{b=0,\,c=0,\,c^\prime=0\}}, \\[6pt]
h^{+}\big((a,a^\prime),(b,b^\prime),(c,c^\prime)\big)
&= \mathbbm{1}_{\{c>0\}} + \mathbbm{1}_{\{c>1\}}.
\end{aligned}
\]
\end{itemize}
\end{theorem}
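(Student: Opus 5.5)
The plan is to reduce the statement to a level-2 LDP for the empirical measure of the Markov chain $W$, followed by the contraction principle. The pairs $(\ell_j,\ell_j^\prime)_{j\in\N}$ are i.i.d.\ with law $\zeta^{\otimes 2}$. Hence $W_j$ is the $3$-window of an i.i.d.\ sequence, i.e.\ a Markov chain on $E$ whose transition kernel appends an independent $\zeta^{\otimes2}$ coordinate. For the empirical measure $L_j=\frac1j\sum_{i=4}^{j}\delta_{W_i}$ I would invoke the LDP for empirical pair/window measures of i.i.d.\ sequences (Donsker--Varadhan, or the level-2 LDP for the pair empirical measure as in Dembo--Zeitouni, Section 6.5). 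This gives the LDP on $\mathcal P(E)$ with rate $j$ and good rate function $H(\nu\|\pi_{1,2}\nu\otimes\zeta^{\otimes2})$ on shift-consistent $\nu$ ($\pi_{1,2}\nu=\pi_{2,3}\nu$), and $\infty$ otherwise. This is exactly $I$. The boundary effects (indices $j<4$, and the fact that the final window is not closed up periodically) change $L_j$ by $O(1/j)$ in total variation, so they are irrelevant for the LDP.

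Next I express $\Upsilon_j$ through $L_j$. Comparing the step-$j$ recursion for $\Pi_j$ with the definition of $h^\chi$, for $j\ge4$ we get
\[
\Pi_j = \sum_{i=4}^{j} h(W_i) + O(1) = j\,\big(\textstyle\int h\,\dd L_j\big) + R_j,
\]
where the error $R_j$ comes from steps $1,2,3$ and depends only on $\ell_1,\ell_2,\ell_3,\ell_2^\prime,\ell_3^\prime$. This error is exponentially negligible: each $\ell_i$ has exponential tails, so $\PP(|R_j|>\delta j)\le \ee^{-cj\delta}$ with $c>0$ fixed, and since we may take $\delta$ small the superexponential estimate holds after a standard truncation. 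Then $\Upsilon_j=(\|G(L_j)\|_1,\,G(L_j)/\|G(L_j)\|_1)$ up to exponentially equivalent errors, and the contraction principle yields $J(x,s)=\inf\{I(\nu):\nu\in\Phi(x,s)\}$. Note that $\|G(\nu)\|_1>0$ always holds, since $h^-+h^0+h^+\ge \mathbbm 1_{\{c>0\}}+\mathbbm 1_{\{b=0,c=0\}}$, and a window with $b=c=0$ or $c>0$ must occur.

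The main obstacle is that $G$ is \emph{not} continuous in the weak topology, because $h^0$ grows linearly in $c$ (it is unbounded). The contraction principle therefore does not apply directly. I would handle this by truncation: set $h_M=h\wedge M$ (only the term $c$ in $h^0$ needs cutting). Then $G_M$ is continuous, since $h_M$ is bounded and continuous on the discrete space $E$. Next I show exponential approximation, i.e.\ for every $\delta>0$,
\[
\lim_{M\to\infty}\limsup_{j\to\infty}\frac1j\log\PP\Big(\frac1j\sum_{i\le j}(\ell_i-M)^+>\delta\Big)=-\infty .
\]
This follows from Chebyshev with $\EE\,\ee^{\theta(\ell-M)^+}\to1$ as $M\to\infty$ for any fixed $\theta<\log(1/p)$, after which we let $\theta$ grow while keeping it in range; this gives $-\theta\delta$ with $\theta$ up to $\log(1/p)$. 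That bound is not $-\infty$, so I would refine it with a two-scale argument. The $\ell_i$ exceeding $M$ are geometric, so their sum above level $M$ has cumulant $\log(1+p^M(\cdots))\to0$ for every $\theta<\log(1/p)$. Hence the decay rate is at least $\theta\delta$ for all $\theta<\log(1/p)$, but it is bounded. If it truly stays bounded, the exponential approximation fails. In that case the identification of $J$ would use a lower semicontinuous regularisation instead: one shows that $I(\nu)<\infty$ together with $G(\nu)$ finite, and that the infimum over $\Phi(x,s)$ already captures the cost of the excess mass in $c$, since $H$ controls $\int c\,\dd\nu$ via the variational formula $\int c\,\dd\nu\le \frac1\theta(H+\log\EE\ee^{\theta c})$. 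Concretely, Theorem 4.2.23 of Dembo--Zeitouni (approximate contraction) then gives the stated formula with the infimum over $\nu$ whose $G(\nu)$ matches $(x,s)$. Goodness of $J$ follows because $I$ is good and the entropy bound above makes $G$ continuous on the level sets of $I$.
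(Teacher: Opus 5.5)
Your reduction to the level-2 LDP for $L_j$ is fine, but the step that carries it over to $\Upsilon_j$ is never carried out, and the fallback you propose does not work. As you noticed yourself, the truncations are \emph{not} exponentially good approximations. Indeed $\PP\big(\tfrac1j\sum_{i\le j}(\ell_i-M)^+>\delta\big)\ge\PP(\ell_1>M+\delta j)\ge p^{M+\delta j+1}$, so the double limit is at least $-\delta\log(1/p)$ for every $M$. Theorem 4.2.23 of Dembo--Zeitouni \emph{assumes} exactly such exponentially good approximations. It also assumes that $\sup_{\{I\le\alpha\}}|G^0_M-G^0|\to0$ as $M\to\infty$, and that fails too. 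Let $\nu_n$ be the $3$-window law of an i.i.d.\ sequence whose $\ell$-coordinates have law $\mu_n=(1-\tfrac1n)\zeta+\tfrac1n\delta_n$ and whose $\ell'$-coordinates have law $\zeta$. Then $\nu_n$ is admissible, $I(\nu_n)=H(\mu_n\|\zeta)\to\log(1/p)$ and $\nu_n\to\nu^*$ weakly, yet $G^0(\nu_n)-G^0(\nu^*)\to1$. So $G^0$ is not continuous on the level set $\{I\le\alpha\}$ for any $\alpha>\log(1/p)$. Your entropy inequality only gives $\sup_{\{I\le\alpha\}}\int c\,\dd\nu<\infty$, not uniform integrability, because $\theta$ cannot exceed $\log(1/p)$. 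Hence neither the LDP for $\Upsilon_j$ nor the goodness of $J$ follows from what you wrote. The same defect appears in your boundary term. With $L_j$ started at $i=4$, $R_j$ contains $\ell_1+\ell_2+\ell_3$, so $\PP(|R_j|>\delta j)$ decays only at rate $\delta\log(1/p)$ and $R_j/j$ is not exponentially negligible. This part is easy to repair: sum from $i=1$ with auxiliary initial variables, which turns the remainder into a bounded combination of indicators, as in the paper.

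The missing idea is to stop contracting through the discontinuous map $G$ and to apply the G\"artner--Ellis theorem to $Z_j=G(L_j)$ directly, which is what the paper does. Since $h(W_i)$ depends on the past only through $\mathbbm{1}_{\{\ell_{i-1}>0\}}$, one has $\EE[\ee^{j\langle t,Z_j\rangle}]=\mu M(t)^j\mathbf{1}$ with $\mu=(q,p)$ and an explicit $2\times2$ matrix $M(t)$. So $\Lambda$ is the logarithm of the Perron--Frobenius eigenvalue of $M(t)$, finite and analytic on the open half-space $\{t_0<\log(1/p)\}$ and steep at its boundary. Steepness is what handles the escape of mass to large $c$ at cost $\log(1/p)$ per unit, the very phenomenon that defeats your truncation. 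The variational form of the rate function is then recovered by convex duality. One shows $\sup_\nu[\langle t,G(\nu)\rangle-I(\nu)]=\Lambda(t)$ for all $t\in\R^3$: Varadhan's lemma with truncation $G_M$ and monotone convergence when $t_0>0$, the Donsker--Varadhan formula with the Perron--Frobenius eigenvector as test function when $t_0<0$, and the measures $\delta_{((n,0),(n,0),(n,0))}$ when $t_0\ge\log(1/p)$. Fenchel--Moreau then gives the formula. Goodness comes from $0\in\mathrm{int}(\cD_\Lambda)$, and the passage to $\Upsilon_j$ is a contraction under $\theta\mapsto(\|\theta\|_1,\theta/\|\theta\|_1)$.
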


Our second main theorem, which is proved in Section~\ref{sec:ProofLDP2}, establishes key properties of the rate function. Let
\[
\cD = \{(x,s) \in (0,\infty) \times \mathcal{S}_2\colon J(x,s)<\infty\}
\]
be the effective domain of $J$. 

\begin{theorem}{\bf [Properties of the rate function]}
\label{LDP2}
\begin{itemize}
\item[{\rm (a)}] On $\cD$, $(x,s) \mapsto J(x,s)$ is marginally convex and achieves its unique zero at $(x^*,s^*)$ given by
\[
(x^*,s^*) = \big(\|G(\nu^*)\|_1,G(\nu^*)/\|G(\nu^*)\|_1\big)
\] 
with $\nu^* = (\zeta^{\otimes 2})^{\otimes 3}$, which equals
\[
x^* = \frac{1}{1-p}, \qquad s^* = \big((1-p)-(1-p)^4,\, p^3 + (1-p)^4,\, p-p^3\big).          
\]
\item[{\rm (b)}] 
For every $(x,s) \in \mathrm{int}(\cD)$, the infimum defining $J(x,s)$ is achieved and equals
\[
J(x,s) = -\lambda(x,s) - x \sum_{\chi \in S} s^\chi \lambda^\chi (x,s), 
\]
where $\lambda(x,s)$ and $\lambda^\chi(x,s)$, $\chi \in S$, are Lagrangian multipliers that uniquely solve four equations involving rational functions whose coefficients depend on $(x,s)$ and $p$ (see Section \ref{sec:ProofLDP2}).
\item[{\rm (c)}]
On $\mathrm{int}(\cD)$, $(x,s) \mapsto J(x,s)$ is analytic and marginally strictly convex.
\item[{\rm(d)}] 
$\mathrm{int}(\cD) = \big\{(x,s)\in(0,\infty)\times\mathcal S_2\colon\, 1 < x < \infty,\,s^- < 1/x < s^-+s^0 < 1 < 3s^-+s^0\big\}$.
\end{itemize}
\end{theorem}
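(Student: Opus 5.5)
The plan is to prove the four parts of Theorem~\ref{LDP2} in the order (a), (b), (d), (c), using Theorem~\ref{LDP1} together with convex duality for the constrained relative-entropy minimisation.

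\emph{Part (a).} Since $I$ is convex and $\Phi$ is defined through the linear maps $\nu\mapsto G(\nu)$, the function $(y^\chi)_{\chi\in S}\mapsto \inf\{I(\nu)\colon G(\nu)=y\}$ is convex on $[0,\infty)^3$. The change of variables $y=xs$ is linear in each of $x$ and $s$ separately. This gives marginal convexity of $J$ on $\cD$.

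The zero of $I$ is unique. Indeed, $I(\nu)=0$ forces $\nu=\pi_{1,2}\nu\otimes\zeta^{\otimes 2}$ together with shift-invariance, and iterating this gives $\nu=(\zeta^{\otimes2})^{\otimes3}=\nu^*$. Hence $J$ vanishes only at $(\|G(\nu^*)\|_1,G(\nu^*)/\|G(\nu^*)\|_1)$.

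The explicit values follow by direct computation under independent geometric laws. We have $\EE[h^+]=2p-p^2$. The second term of $h^-$ contributes $(1-p)p$, and the third contributes $(1-p)^2p$. For $h^0$ we get $\EE[c]=p/(1-p)$ minus $p+p^2$, plus $(1-p)^3$.

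Summing the components, $x^*=\EE[c]+(1-p)+(1-p)^2p+(1-p)^3$ up to simplification. I would check that this equals $1/(1-p)$, and then compute each $s^{*,\chi}$ by dividing by $x^*$. This is a routine algebra check.

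\emph{Part (b).} Set up the Lagrangian
\[
I(\nu)-\lambda\Big(\textstyle\int\dd\nu-1\Big)-\sum_\chi\lambda^\chi\Big(\int h^\chi\dd\nu - xs^\chi\Big).
\]
Because $I$ carries the shift-invariance constraint, I would first reduce to a Markov/Donsker--Varadhan form. Since $h^\chi$ depends only on $(b,c,c')$, the optimal $\nu$ is an exponential tilt of a Markov kernel on pairs, with transition weights proportional to $\zeta(c)\zeta(c')\ee^{\sum_\chi\lambda^\chi h^\chi}$ times a ratio of right eigenfunction values.

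The $h^\chi$ depend on $c$ only through $\{c=0\}$, $\{c=1\}$ and $\{c\ge2\}$, plus the linear term $c$ in $h^0$, and on $c'$ only through $\{c'=0\}$. So the eigenproblem reduces to a finite ($2\times2$, in the state $b=0$ versus $b>0$) matrix. Its entries are geometric sums and hence rational functions of $\ee^{\lambda^\chi}$ and $p$.

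The four equations are then: the normalisation (Perron eigenvalue equal to $\ee^{-\lambda}$) and the three moment constraints $\partial/\partial\lambda^\chi$ of the log-eigenvalue equal to $xs^\chi$. Existence of a solution for interior points and its uniqueness follow from strict convexity of the log-spectral radius as a function of $(\lambda^\chi)$, which holds since the $h^\chi$ are affinely independent. The value formula is the Legendre duality identity, valid because $J$ is the Legendre transform of the log-spectral radius, by G\"artner--Ellis and contraction.

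\emph{Part (d).} This is where I expect the main obstacle to lie: one has to identify exactly which moment vectors $y=xs$ are attainable by shift-invariant $\nu$ with finite entropy. I would describe the extreme configurations of $(b,c,c')$ sequences and derive the constraints from the structure of the $h^\chi$:
\begin{itemize}
\item $h^-+h^0+h^+\ge1$ per step, giving $x>1$;
\item $h^-\le1$ on average in the appropriate sense, giving $s^-<1/x$;
\item the stated relations $1/x<s^-+s^0$ and $1<3s^-+s^0$ as the remaining combinatorial bounds.
\end{itemize}
For each bound I would exhibit sequences (periodic patterns of $\ell$'s) that approach it, and take their convex hull. The difficulty is proving that no other constraints arise. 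To do this I would compute the support function of the convex hull of the image of $G$, i.e.\ the asymptotics of the log-spectral radius as $|\lambda|\to\infty$.

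\emph{Part (c).} Analyticity follows from the implicit function theorem applied to the analytic four-equation system, since the Jacobian is the Hessian of the log-spectral radius, which is non-degenerate. Marginal strict convexity follows from the positive definiteness of that Hessian under the Legendre transform.
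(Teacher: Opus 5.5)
\textbf{The gap is in (d).} You flag (d) as the hard part but do not establish it, and your first bullet is false. When $b>0$ and $c=0$, all three $h^\chi$ vanish, because the new branching point's type is fixed only at the next step. So $h^-+h^0+h^+\geq1$ fails per step. Likewise $h^-=2$ whenever $b=0$, $c>0$, so $h^-\leq1$ is not a pointwise fact either. Your third bullet only restates the claim.

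The missing idea is the consistency relation $\pi_{1,2}\nu=\pi_{2,3}\nu$, which gives $\nu(b=0)=\nu(c=0)$. With it, every admissible $\nu$ with $\theta=G(\nu)=xs$ satisfies
\begin{gather*}
\theta^-=1-\nu(b=c=c'=0),\qquad \theta^-+\theta^0=1+\int(c-2)^+\,\dd\nu,\\
2\theta^--\theta^+=\nu(c=1)+2\,\nu\big(b=0,\,(c,c')\neq(0,0)\big),\qquad \theta^+=\nu(c\geq1)+\nu(c\geq2).
\end{gather*}
These yield $s^-\leq1/x\leq s^-+s^0\leq1\leq3s^-+s^0$, and hence $x\geq1$. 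They are the measure-level counterparts of the paper's identities $xs^-=1-A^0_0\rho_0$ and $x(s^-+s^0)=1+(1-\rho_0)\Sigma^0$, which the paper reads off its explicit parametrisation.

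Once you have these identities, showing that no further constraints arise is the easy part, not the hard one. In coordinates $(\theta^-,\theta^0,\theta^+)$ the polyhedron they cut out has vertices $(1,0,0)$, $(0,1,0)$, $(1,0,2)$ and recession cone spanned by $(0,1,0)$. Each vertex is $G$ of a finite-entropy measure on a constant configuration: $\ell\equiv0$ with $\ell'\geq1$; $\ell\equiv\ell'\equiv0$; $\ell\equiv2$. The recession direction is reached in the closure by mixing in $\ell\equiv n$, which gives $(1,n-2,2)$, with vanishing weight as $n\to\infty$. Convexity of $\mathrm{dom}\,\bar I$, together with $\mathrm{int}\,C=\mathrm{int}\,\overline{C}$ for convex $C$, then gives the stated $\mathrm{int}(\cD)$.

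\textbf{Parts (b)--(c): a different but sound route.} The paper redoes the minimisation at the level of measures with Lagrange multipliers. It reduces the consistency equations to the explicit parametrisation $(\ast\ast)$, $(\diamondsuit)$, $(\blacktriangle)$, which feeds its numerics and its boundary analysis. You instead reuse $J(x,s)=\Lambda^*(xs)$ from Step~8 of the proof of Theorem~\ref{LDP1}, with $\Lambda=\log\vartheta$ for the explicit $2\times2$ matrix $M(t)$. With $t=-(\lambda^\chi)_{\chi\in S}$ and $\lambda=\Lambda(t)$, the two value formulas coincide. Also $\nabla^2\Lambda^*=(\nabla^2\Lambda)^{-1}$ gives strict convexity of $\Lambda^*$ on the interior directly. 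Two corrections are needed:
\begin{itemize}
\item Existence of $t$ with $\nabla\Lambda(t)=xs$ at every interior point does not follow from strict convexity. It needs the lower semicontinuity and steepness of Lemma~\ref{lem:regularity}, which make $\Lambda$ of Legendre type, so that $\nabla\Lambda$ maps $\cD_\Lambda$ onto $\mathrm{int}(\mathrm{dom}\,\Lambda^*)$.
\item Non-degeneracy of $\nabla^2\Lambda$ does not follow from affine independence of the $h^\chi$. You need that no $a\neq0$ makes $\langle a,h\rangle$ a constant plus $g(\eta_1)-g(\eta_0)$. This is easily checked: for $b>0$, comparing $c=1,2,3$ forces $a^+=a^0=0$; then $b=c=0$ with varying $c'$ forces $a^-=0$.
\end{itemize}

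\textbf{Part (a): arithmetic slips.} $\EE[h^+]=\PP(c>0)+\PP(c>1)=p+p^2$, not $2p-p^2$. The correct total is $\EE[c]+p+pq+pq^2+q^3=p/q+1=1/q$ with $q=1-p$. The sum you display does not equal $1/(1-p)$, so your routine check would fail as written.
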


The following law of large numbers (LLN) is immediate from the fact that $J$ has a unique zero. 

\begin{corollary}{\bf [LLN for type fractions along a random path]}
\label{LLN}
The sequence $\Upsilon$ converges to $(x^*,s^*)$ almost surely.
\end{corollary}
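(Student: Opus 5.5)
The plan is to derive the corollary directly from Theorem~\ref{LDP1} and Theorem~\ref{LDP2}(a), using the standard argument that an LDP with a good rate function having a unique zero yields almost sure convergence via Borel--Cantelli.

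Fix $\varepsilon>0$ and let $B_\varepsilon$ be the open ball of radius $\varepsilon$ around $(x^*,s^*)$ in $(0,\infty)\times\mathcal{S}_2$, with the product of the Euclidean metric and $d_{\mathrm{TV}}$. Put $F_\varepsilon = B_\varepsilon^c$, which is closed. The first step is to show that $c_\varepsilon := \inf_{F_\varepsilon} J > 0$.

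Suppose instead that $c_\varepsilon=0$. Then there are points $z_n\in F_\varepsilon$ with $J(z_n)\to 0$, so eventually $z_n\in\{J\le 1\}$. This level set is compact because $J$ is a good rate function (Theorem~\ref{LDP1}). Hence a subsequence converges to some $z\in F_\varepsilon$, using that $F_\varepsilon$ is closed. Lower semicontinuity gives $J(z)=0$, and since $z\in F_\varepsilon$ we have $z\neq(x^*,s^*)$. This contradicts the uniqueness of the zero in Theorem~\ref{LDP2}(a).

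One point deserves care: the state space is not compact, since $x$ may escape to $0$ or $\infty$. This is exactly why goodness, rather than mere lower semicontinuity, is needed. I regard this as the only genuine obstacle. The LDP upper bound as stated is a statement about closed subsets of the Polish space $(0,\infty)\times\mathcal{S}_2$, so no extra exponential tightness argument is required.

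The LDP upper bound for the closed set $F_\varepsilon$ then gives
\[
\limsup_{j\to\infty}\frac1j\log\PP(\Upsilon_j\in F_\varepsilon)\le -c_\varepsilon .
\]
Therefore $\PP(\Upsilon_j\in F_\varepsilon)\le \ee^{-jc_\varepsilon/2}$ for all $j$ large enough. These probabilities are summable in $j$, so Borel--Cantelli yields that almost surely $\Upsilon_j\in B_\varepsilon$ for all but finitely many $j$.

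Intersecting over the countable family $\varepsilon=1/k$, $k\in\N$, gives $\Upsilon_j\to(x^*,s^*)$ almost surely. The explicit values of $x^*$ and $s^*$ are those computed in Theorem~\ref{LDP2}(a).

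As a consistency check, and an alternative route, I would note the following. $N_j^\chi/j$ equals $\frac1j\sum_{i\le j}h^\chi(W_i)$ up to boundary terms from the first three steps. The entries of $W$ are i.i.d.\ geometric, so $W$ is an ergodic $3$-block chain, and the ergodic theorem gives $N_j^\chi/j\to\int h^\chi\,\dd\nu^*$ almost surely. This produces the same limit $(\|G(\nu^*)\|_1,\,G(\nu^*)/\|G(\nu^*)\|_1)$.
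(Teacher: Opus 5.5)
Your proposal is correct and is exactly the argument the paper has in mind when it calls the LLN immediate from the unique zero of $J$: goodness of $J$ (Theorem~\ref{LDP1}) and the unique zero (Theorem~\ref{LDP2}(a)) give $\inf_{F_\varepsilon}J>0$, and the LDP upper bound plus Borel--Cantelli do the rest. If $c_\varepsilon=\infty$, use $\min\{c_\varepsilon,1\}$ in the exponential bound. Your alternative via the strong law for the $1$-dependent sums $\sum_{i\le j}h^\chi(W_i)$ is also valid, and it has the advantage of not relying on the LDP machinery at all.
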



\paragraph{Discussion.}

Theorem \ref{LDP1} shows that the large deviation properties of the sequence $\Upsilon$ can be analysed through the large deviation properties of the empirical law of the Markov process $W$. The rate function $J$ of the former is an infimum of the rate function $I$ of the latter subject to certain constraints that are given by the functional $G$. This functional is an integral with respect to the measure $\nu$ of the function $h$ dictated by the exploration algorithm, and records how the path of the Markov chain $W$ gives rise to the deviation $(x,s)$ of interest. 

Theorem \ref{LDP2} identifies the key properties of the rate function $J$. 
\begin{itemize}
\item
The fact that, according to Part (a), $J$ has a unique zero guarantees that the strong law of large numbers holds, as stated in Corollary \ref{LLN}. The LDP does not imply the central limit theorem. However, if the latter would hold, then the variance would scale like $j$ times the inverse curvature of the rate function at its unique zero, according to the folklore of large deviation theory \cite[Chapter I]{FdH}. The curvature can be read off from the Hessian matrix of the Lagrange multipliers (which are identified in Section~\ref{sec:ProofLDP2}). 
\item
Part (b) allows us to \emph{numerically compute} the rate function (as explained in Remark~\ref{rem:numcomp} and Figs.~\ref{fig:plot1}--\ref{fig:plot2}). 
\item
The marginal strict convexity in Part (c) is an important property, because it allows us to extend the LDP in Theorem \ref{LDP1} to the setting where, instead of a \emph{single} downward random path, we consider a subtree consisting of a \emph{finite} number of downward random paths. Suppose that for fixed $\ell$ we run a downward random path of branching depth $\alpha_0\ell$, subsequently split this path and run two downward random paths of branching depths $\alpha_{10}\ell$ and $\alpha_{11}\ell$, and so on. Suppose that we fix $\alpha_0,\alpha_{10},\alpha_{11}$, and so on, to be all strictly positive, and let $\ell \to \infty$. Then the sequences $\Upsilon$ associated with the different pieces of the tree are asymptotically independent as $\ell\to\infty$. Hence their joint large deviations factorise and, by the marginal strict convexity of the rate function, a joint large deviation is optimal when all the pieces make the \emph{same} large deviation. Thus, by counting the vertex types and dividing by the total size of the subtree, we get the \emph{exact same} LDP.
\item
Part (d) of Theorem \ref{LDP2} shows that $\mathrm{int}(\cD)$ does not depend on $p$ and is therefore universal. It makes no statement about the relation between $\cD$ and the closure of $\mathrm{int}(\cD)$. The computations and the numerics in Section~\ref{sec:ProofLDP2} suggest that $\cD$ is closed, i.e., $J(x,s)$ converges to a finite limit as $(x,s) \to \partial\cD$ within $\mathrm{int}(\cD)$. 
\end{itemize}
 

\subsection{Comparison with exploration according to generation} 
\label{sec:comparison}

In \cite{HHLP} we computed the typical fractions of vertex types $(f^\chi)_{\chi \in S}$ according to \emph{generation depth}, i.e., all the vertices up to a given generation. For a general offspring law $(p_k)_{k\in\N_0}$ under the restriction that $p_0=0$, $p_1<1$ and $\sum_{k\in\N} (k\log k) p_k<\infty$, we found that
\[
f^\chi = \sum_{k \in \N_0} p_k\,\PP\big({\rm sign}\big[\tX+S_k-k(k+1)\big] = \chi\big),
\]
where $S_k= \sum_{l=1}^k X_l$ with $(X_l)_{l=1}^k$ drawn independently from the offspring law, and $\tilde{X}$ drawn independently from the size-biased offspring law. For the special case of \emph{binary branching}, we have
\[
\PP(X_1 = k)  = p\,\mathbbm{1}_{\{k=1\}} + (1-p)\,\mathbbm{1}_{\{k=2\}},  \quad 
\PP(\tilde{X} = k) = \frac{p\,\mathbbm{1}_{\{k=1\}} + 2(1-p)\,\mathbbm{1}_{\{k=2\}}}{p+2(1-p)}, \quad k \in \N_0.
\]
Substitution gives
\[
f^\chi = p\,\PP\big({\rm sign}\big[\tX+X_1-2\big] = \chi\big) + (1-p)\,\PP\big({\rm sign}\big[\tX+X_1+X_2-6\big] = \chi\big).
\]
Clearly,
\[
\PP\big({\rm sign}\big[\tX+X_1-2\big] = \chi\big) = \left\{\begin{array}{ll}
0, &\chi = -,\\
\tfrac{p^2}{2-p},  &\chi = 0,\\
1-\tfrac{p^2}{2-p}, &\chi = +,
\end{array}
\right.
\]
and
\[ 
\PP\big({\rm sign}\big[\tX+X_1+2-6\big] = \chi\big) = \left\{\begin{array}{ll}
1-\tfrac{2(1-p)^2}{2-p}, &\chi = -,\\
\tfrac{2(1-p)^2}{2-p},  &\chi = 0,\\
0, &\chi = +.
\end{array}
\right.
\]
Hence
\[
f^- = (1-p)\big[1-\tfrac{2(1-p)^2}{2-p}\big], \quad f^0 = 1- f^- - f^+, \quad f^+ = p\big[1-\tfrac{p^2}{2-p}\big].
\]
Note that these do \emph{not} coincide with the typical fractions that we found for our exploration along a random path up to a given  \emph{branching depth}. 

\begin{remark}{\bf [Other forms of exploration]}
{\rm We expect that the LDP depends sensitively on the type of exploration. For instance, we may consider an exploration that includes \emph{all} the vertices up to a given branching depth, not just those along a random path, and ask what is the associated LDP for their types. We expect that an LDP holds, but with a different rate function. We do not pursue such generalisations in the present paper because they are harder. The reason is that the type of a vertex depends on the degrees of all its neighbours and their neighbours, which on a tree can lie in future generations. On trees, such future generations are comparable in size to the union of all the previous generations.}
\end{remark} 


\subsection{Beyond binary branching}
\label{sec:beyondbinary}

The exploration in Section~\ref{sec:exploration} can be extended beyond binary branching. Let $(p_k)_{k\in\N_0}$ be an offspring law satisfying
\[
p_0=0, \qquad p_1\in (0,1), \qquad p_k=0 \quad \forall\, k>m ,
\]
for some $m \in \N_3$. Under this assumption the Galton--Watson tree is infinite and locally finite almost surely. In particular, each vertex has either one child (a linear step) or $k \in \{2,\dots,m\}$ children (a branching step).

Linear pieces are defined as before: starting from a vertex $v$, we follow successive vertices having exactly one child until the next vertex $\tau$ with $X_\tau \geq 2$ children, which is a branching point. At this branching point, one child is chosen uniformly at random on the underlying probability space $(\Omega,\mathscr{F},\PP)$, independently of the tree and of the past explorations, to continue the active exploration path, while the remaining $k-1$ children are declared passive (ghost) roots. As in the binary case, passive subtrees are explored only insofar as needed to determine the vertex types along the active path.

Compared to the binary case, a single branching step may now create more than one passive subtree. Consequently, the state space of the block process must be enlarged to encode, at each branching step, the number of offspring together with the lengths of the linear pieces generated at that step. Concretely, let $\tau_j$ be the $j$th branching point encountered by the active exploration, and let $X_{\tau_j} = K_j \in \{2,\dots,m\}$. After we select the active child, we index this child as $\phi_{j,0}$, and index the remaining $K_j-1$ offspring as $\phi_{j,1},\dots,\phi_{j,K_j-1}$ in increasing order of their Ulam--Harris labels. Define the step variable
\[
\xi_j = \Big(K_j, \ell_{j}^{(0)}, \ell_{j}^{(1)},\dots,\ell_{j}^{(K_j-1)}\Big)
\in \bigcup_{k=2}^{m} \{k\}\times\N_0^{k}, \qquad j \in \N,
\]
where $\ell_{j}^{(r)}$ is the length of the linear piece starting from $\phi_{j,r}$ (with $r=0$ for the active child and $r \geq 1$ for the passive children). The sequence $\{\xi_j\}_{j\in \N}$ is i.i.d.\ under $\PP$, because the offspring numbers and the descendant subtrees of different children are independent in a Galton--Watson tree. 

To determine vertex types, we need local information spanning a bounded number of successive steps, exactly as in the binary case. Therefore we can build a time-homogeneous Markov chain by working with overlapping blocks of successive $\xi_j$'s (analogous to the $3$-block process $W$ in Section~\ref{sec:LDP}). The counting process of vertex types can be written as a measurable function of this Markov chain. The analysis extends along the same lines as in the binary case and leads to a rate function with a variational structure similar to that of Theorem~\ref{LDP1}, namely, a relative entropy minimised under linear constraints determined by the exploration algorithm (which determines new functions $h^\chi$, $\chi \in S$). Since the binary case already leads to a somewhat complex rate function, we do not pursue the general offspring setting further.


\section{Proofs}
\label{sec:proofs}

Section~\ref{sec:ProofLemma} contains the proof of Lemma~\ref{lem1}, Section~\ref{sec:ProofLDP1} of Theorem~\ref{LDP1}, Section~\ref{sec:ProofLDP2} of Theorem~\ref{LDP2}. The proof of Theorem~\ref{LDP1} requires several approximation steps.


\subsection{Proof of the type characterisation lemma}
\label{sec:ProofLemma}

In this section we provide the proof of Lemma~\ref{lem1}. 

\begin{proof}
We prove parts (a)--(d) of the lemma separately.

\medskip\noindent 
(a) Consider the root $\phi$. Each child $\phi_j$ of $\phi$ has degree $d_{\phi_j} = 1+X_{\phi_j}$ (one edge to the root and $X_{\phi_j}$ edges to its own children). Hence
\[
\Delta_\phi
=\frac{1}{d_{\phi}}\sum_{j=1}^{d_{\phi}}(1+X_{\phi_j})-d_{\phi}
=\frac{1}{d_{\phi}}\sum_{j=1}^{d_{\phi}}X_{\phi_j}-(d_{\phi}-1).
\]
If $d_{\phi} = 1$, then $\Delta_\phi = X_{\phi_1}\in\{1,2\}$, and $\Delta_\phi>0$. If $d_{\phi}=2$, then $\Delta_\phi = \tfrac12(X_{\phi_1}+X_{\phi_2})-1$. Since $X_{\phi_1},X_{\phi_2} \in \{1,2\}$, the sum $X_{\phi_1}+X_{\phi_2} \in \{2,3,4\}$, so $\Delta_\phi\in\{0,\tfrac12,1\}$. In particular, $\Delta_\phi \ge 0$ always, and $\Delta_\phi=0$ if and only if $X_{\phi_1}=X_{\phi_2}=1$, which means that $\phi_1$ and $\phi_2$ are branching-linear points. Therefore, for $d_{\phi}=2$ the root is neutral if and only if both its children are branching-linear points.

Combining the two cases $d_{\phi}=1$ and $d_{\phi}=2$, the root can only be positive or neutral, and it is neutral precisely in case it is a branching point and its two children are branching-linear points. This proves Lemma~\ref{lem1}(a).
		
\medskip\noindent 
(b) Let $u\neq \phi$ be a linear point, i.e., $d_u=2$ and the parent of $u$ is not a branching point. Write $u_{-1}$ for the parent of $u$, and consider $u_1$ as the unique child of $u$.
Then
\[
\Delta_u = \tfrac{1}{2}\big(d_{u_{-1}}+1+X_{u_1}\big)-2.
\]
If $u_{-1}=\phi$, then $d_{u_{-1}}=1$, and hence
\[
\Delta_u =\tfrac{1}{2}X_{u_1}-1.
\]
Thus, $\Delta_u<0$ if $X_{u_1}=1$, i.e., if $u$ is not the parent of a branching point. Moreover, $\Delta_u=0$ if $X_{u_1}=2$, i.e., if $u$ is the parent of a branching point. If $u_{-1}\neq\phi$, then $d_{u_{-1}}=2$, and hence
\[
\Delta_u=\tfrac{1}{2}X_{u_1}-\tfrac{1}{2}.
\]
Thus, $\Delta_u>0$ if $X_{u_1}=2$, i.e., if $u$ is the parent of a branching point, while $\Delta_u=0$ if $X_{u_1}=1$. Combining the two cases, we conclude that a linear point is negative if and only if it is the child of the root and not the parent of a branching point, and it is positive if and only if it is the parent of a branching point and not the child of the root. In other cases it is neutral. This proves Lemma~\ref{lem1}(b).

\medskip\noindent 
(c) Let $u$ be a branching-linear point, i.e., $d_u=2$ with a branching parent $u_{-1}$ and a unique child $u_{1}$. Then
\[
\Delta_u = \tfrac{1}{2}\big(d_{u_{-1}} + d_{u_{1}}\big) - 2.
\]
First, let $u_{-1}=\phi$, i.e., $d_{u_{-1}}=2$. If $u_{1}$ is a linear point, then $d_{u_{1}}=2$, so $\Delta_u=0$, and $u$ is neutral. If instead $u_{1}$ is a branching point, then $d_{u_{1}}=3$, so $\Delta_u=\frac{1}{2}>0$, and $u$ is positive. Next, let $u_{-1}\neq \phi$, i.e., $d_{u_{-1}}=3$. If $u_{1}$ is a linear point, then $d_{u_{1}}=2$, so $\Delta_u=\frac{1}{2}>0$. If $u_{1}$ is a branching point, then $d_{u_{1}}=3$, hence $\Delta_u=1>0$. Therefore, in both cases $u$ is positive. Hence a branching-linear point is neutral if and only if its parent is the root and its child is a linear point; otherwise it is positive. This proves Lemma~\ref{lem1}(c).

\medskip\noindent 
(d) Let $u\neq \phi$ be a branching point, so $d_u=3$ with the parent $u_{-1}$ and two children $u_1$ and $u_2$. The parent $u_{-1}$ can have degree $d_{u_{-1}}\in\{1,2,3\}$, while each child $u_i$ has degree $d_{u_i}\in\{2,3\}$. Therefore
\[
\Delta_u = \tfrac{1}{3}\big(d_{u_{-1}}+d_{u_1}+d_{u_2}) - 3 \leq \tfrac{1}{3}(3+3+3)-3=0.
\]
Thus $\Delta_u\leq 0$, so $u$ is either negative or neutral. Moreover, $\Delta_u=0$ if and only if $d_{u_{-1}}=d_{u_1}=d_{u_2}=3$, i.e., the parent and both children are also non-root branching points. Hence a branching point that is not the root is either negative or neutral, and it is neutral precisely when its parent and both of its children are also non-root branching points. This proves Lemma~\ref{lem1}(d).
\end{proof}


\subsection{Proof of the first LDP theorem} 
\label{sec:ProofLDP1}

In this section we prove Theorem~\ref{LDP1}. 

\begin{proof}
The proof is divided into 8 Steps and runs via the G\"artner--Ellis theorem \cite[Theorem V.6]{FdH}. In Steps 1--3 we introduce the key underlying Markov chain and state the standard LDP for its empirical measure. In Steps 4--5 we change variables, compute an associated moment generating function in terms of iterates of $2 \times 2$ matrices (Lemma~\ref{lem:matrix-rep} below), and derive the limit of the associated cumulant generating function (Lemma~\ref{lem:existence} below). In Steps 6 we derive the key properties of the limit that are needed for the application of the G\"artner--Ellis theorem (Lemma~\ref{lem:regularity} below). In Step 7 we apply the G\"artner--Ellis theorem to derive the LDP for the sequence of empirical averages of the vertex types (Lemma~\ref{lem:LDP-type-fractions} below). The rate function in this LDP is given by the Fenchel--Legendre transform of the cumulant generating function. In Step 8, finally, we complete the proof of Theorem~\ref{LDP1}.

\paragraph{Step 1.\ Block process and Markov property.}

Consider the independent families $\{\ell_j\}_{j\in\N_{-1}}$ and $\{\ell_j^\prime\}_{j\in\N_{-1}}$ of i.i.d.\ random variables with law $\zeta$ (some of which arise from the exploration process, while others are auxiliary) given by
\[
\zeta(k)=p^k\, (1-p), \qquad k\in\N_0.
\]
For $j\in\N$, define the $3$-block process
\[
W_{j}=((\ell_{j-2},\ell_{j-2}^{\prime}),(\ell_{j-1},\ell_{j-1}^{\prime}),(\ell_{j},\ell_{j}^{\prime})),
\]
where $\{\ell_j\}_{j\in\N_{-1}}$ and $\{\ell_j^{\prime}\}_{j\in\N_{-1}}$ are i.i.d.\ geometric random variables on $\N_0$ and mutually independent. The stochastic process $\{W_j\}_{j\in \N}$ is a time-homogeneous Markov chain on $E$ with transition kernel
\[
P\left(((a,a^\prime),(b,b^\prime),(c,c^\prime)),
((d,d^\prime),(e,e^\prime),(f,f^\prime))\right) 
= \mathbbm{1}_{\{(b,b^\prime)=(d,d^\prime)\}}\,\mathbbm{1}_{\{(c,c^\prime)=(e,e^\prime)\}}\, \zeta(f)\, \zeta(f^\prime)
\]
for $a,b,c,d,e,f,a^\prime,b^\prime,c^\prime,d^\prime,e^\prime,f^\prime\in\N_0$.


\paragraph{Step 2.\ Increments of the counting process.}

To study the evolution of the counting process $\{\Pi_{j}\}_{j\in\N}$, we introduce the measurable functions $h^\chi\colon\,E\to\N_0$ by
\[
\begin{aligned}
&h^{-}\big(((a,a^\prime),(b,b^\prime),(c,c^\prime))\big)
= \mathbbm{1}_{\{c>0\}}
+ \mathbbm{1}_{\{b=0,\, c>0\}}
+ \mathbbm{1}_{\{b=0,\,c=0,\, c^\prime>0\}}, \\[6pt]
&h^{0}\big(((a,a^\prime),(b,b^\prime),(c,c^\prime))\big)
= c - \mathbbm{1}_{\{c>0\}}-\mathbbm{1}_{\{c>1\}}
+ \mathbbm{1}_{\{b=0,\,c=0,\,c^\prime=0\}}, \\[6pt]
&h^{+}\big(((a,a^\prime),(b,b^\prime),(c,c^\prime))\big)
= \mathbbm{1}_{\{c>0\}} + \mathbbm{1}_{\{c>1\}} .
\end{aligned}
\]
For $\chi\in S$, the increments $N_j^{\chi} - N_{j-1}^{\chi} = h^{\chi}(W_j)$ for $j\in\N_{4}$, and
\begin{align*}
 N_3^{\chi} = h^{\chi}(W_1) + h^{\chi}(W_2)+h^{\chi}(W_3) + R^{\chi},
\end{align*}
where $R^{\chi}$ is a finite linear combination of indicator functions and hence $\sup_{\omega\in\Omega}|R^{\chi}(\omega)|<\infty$. Thus
\[
N_j^{\chi} = \sum_{i=1}^{j} h^\chi(W_i) + R^{\chi}, \qquad \chi\in S,\quad j\in\N_4.
\]


\paragraph{Step 3.\ LDP for the empirical measure of the block process.}

We next establish an LDP for the empirical measures
\[
L_j = \frac{1}{j} \sum_{i=1}^{j} \delta_{W_{i}}\in\mathcal{P}(E),\qquad j\in\N.
\]
In order to describe the possible limit points of the sequence $\{L_j\}_{j\in\N}$, it is necessary to take into account the overlap structure of the triples. Successive triples $W_i$ and $W_{i+1}$ share the coordinates $(\ell_{i-1},\ell_{i-1}^{\prime})$ and $(\ell_{i},\ell_{i}^{\prime})$, and therefore any limiting law must satisfy a consistency relation between the $(1,2)$-marginal and the $(2,3)$-marginal. More precisely, for $j\in\N$, define
\[
\pi_{1,2}L_j
= \frac{1}{j}\sum_{i=1}^j
\delta_{((\ell_{i-2},\ell_{i-2}^{\prime}),(\ell_{i-1},\ell_{i-1}^{\prime}))},
\qquad
\pi_{2,3}L_j = \frac{1}{j}\sum_{i=1}^j
\delta_{((\ell_{i-1},\ell_{i-1}^{\prime}),(\ell_i,\ell_i^{\prime}))}.
\]
By shifting the index in the first sum, we see that all terms coincide except for the two boundary pairs $((\ell_{-1},\ell_{-1}^{\prime}),(\ell_{0},\ell_{0}^{\prime}))$ and $((\ell_{j-1},\ell_{j-1}^{\prime}),(\ell_j,\ell_j^{\prime}))$. Hence, with $\|\cdot\|_{\mathrm{TV}}$ the total variation norm on $\mathcal{P}(E)$,
\[
\big\|\pi_{1,2}L_j-\pi_{2,3}L_j\big\|_{\mathrm{TV}}
=\frac{1}{2}\sum_{x\in(\N_0^2)^2}\big|\pi_{1,2}L_j(x)-\pi_{2,3}L_j(x)\big|
\leq \frac{1}{2}\times\frac{2}{j} = \frac{1}{j}\to 0,
\qquad j\to\infty.
\]
Since the projection maps
\[ 
\pi_{i,j}\colon\,\mathcal{P}(E)\to\mathcal{P}((\N_0^2)^2),\qquad i\neq j,\quad i,j\in\{1,2,3\}, 
\]
are continuous with respect to the weak topology, this property is preserved under weak limits. Consequently, any weak limit point $\nu$ of $(L_j)_{j\in\N}$ must satisfy the consistency relation
\[
\pi_{1,2}\nu = \pi_{2,3}\nu.
\]
By \cite[Theorem II.18]{FdH} and the consistency relation, the sequence $\{L_j\}_{j\in\N}$, taking values in the Polish space $\mathcal{P}(E)$, satisfies the LDP with rate $j$ and good rate function
\[
I(\nu) =
\begin{cases}
\begin{aligned}[t]
\sum_{x,x^{\prime},y,y^{\prime},z,z^{\prime}\in\N_0}
&\nu\big((x,x^{\prime}),(y,y^{\prime}),(z,z^{\prime})\big)\\[-3mm]
&\times
\log\bigg(\dfrac{\nu\big((x,x^{\prime}),(y,y^{\prime}),(z,z^{\prime})\big)}
{\pi_{1,2}(\nu)\big((x,x^{\prime}),(y,y^{\prime})\big)\,\zeta(z)\,\zeta(z^{\prime})}\bigg),
\end{aligned}
&\text{if } \pi_{1,2}\nu=\pi_{2,3}\nu,\\[15mm]
\infty, 
& \text{otherwise,}
\end{cases}
\]
where
\[
\pi_{1,2}(\nu) \big((x,x^{\prime}),(y,y^{\prime})\big) = \sum_{(z,z^{\prime})\in\N_0^2}
\nu \big((x,x^{\prime}),(y,y^{\prime}),(z,z^{\prime})\big).
\]
Note that if $\nu$ is admissible, in the sense that it satisfies the consistency condition $\pi_{1,2}\nu=\pi_{2,3}\nu$, then $I(\nu) = H(\nu ~\|~ \pi_{1,2}\nu \otimes \zeta^{\otimes 2})$, the relative entropy of $\nu$ with respect to $\pi_{1,2}\nu \otimes \zeta^{\otimes 2}$.


\paragraph{Step 4.\ Functional triple and change of variables.}

For $\nu\in\mathcal{P}(E)$, define the functional triple
\[
G(\nu) = (G^{\chi}(\nu))_{\chi\in S},
\qquad
G^{\chi}(\nu) = \int_{E} h^{\chi} \, \mathrm{d}\nu.
\]
Note that
\[
G\colon\, \mathcal{P}(E) \to F,
\]
with
\[
F = [0,2]\times[0,\infty]\times[0,2].
\]
To metrise $F$, we use the map $\psi\colon\, [0,\infty] \to [0,1]$ given by
\[
\psi(a) = \frac{a}{1+a}, \quad a\in[0,\infty), \qquad \psi(\infty) = 1.
\]
We endow $[0,\infty]$ with the metric
\[
d_{\infty}(a,b) = |\psi(a)-\psi(b)|.
\]
Indeed, $([0,1],\psi)$ is homeomorphic to the one-point (Alexandroff) compactification of $[0,\infty)$. The map $\psi$ is a homeomorphism between $[0,\infty)$ and $[0,1)$, and it extends continuously to a homeomorphism between $[0,\infty]$ and $[0,1]$. Hence, the topology induced by $d_{\infty}$ coincides with the usual topology on $[0,\infty)$, and extends it naturally to include the point $\infty$. We equip $F$ with the product metric
\[
d_F\big((x^-,x^0,x^+),(y^-,y^0,y^+)\big) = |x^{-} - y^{-}| + d_{\infty}(x^{0},y^{0}) + |x^{+} - y^{+}|.
\]
This metric induces the natural product topology on $F$, and since each coordinate space is Polish, the metric space $(F,d_F)$ is Polish as well.
 
From the representation of $N_j^{\chi}$ and the definition of $L_j$, we obtain the vector of normalised counts
\[
Q_{j}=\Big(\frac{N_j^{\chi}}{j}\Big)_{\chi\in S} = G(L_j) + \frac{1}{j}\,R, \qquad j \in \N_4,
\]
where $R = (R^{\chi})_{\chi\in S}$ is a uniformly bounded remainder term, i.e.
\[
\sup_{\omega\in\Omega}\|R(\omega)\|_{1} 
= \sup_{\omega\in\Omega}\left(|R^{-}(\omega)| + |R^{0}(\omega)| + |R^{+}(\omega)|\right) < \infty.
\]
Introduce the map $\Psi\colon\, (0,\infty)^3 \to (0,\infty) \times \mathcal{S}_{2}$ by setting
\[
\Psi(\theta) = \big(\|\theta\|_1,\, \theta/ \|\theta\|_1\big).
\]
Note that $\Psi$ is a homeomorphism with inverse $\Psi^{-1}(x,s)=xs$, and that
\[
\Upsilon_{j}=\Psi(Q_{j}), \qquad j\in\N_4.
\]


\paragraph{Step 5.\ The cumulant generating function.}

Define the empirical type averages
\[
Z_j = \frac{1}{j} \sum_{i=1}^{j} h(W_i),  \qquad j \in \N,
\]
where $h = (h^-,h^0,h^+)$ and $Z_{j} \in F_0$ with
\[
F_0 = [0,2] \times [0,\infty) \times [0,2].
\]
Note that $Z_j = G(L_j)$. For $t = (t_{-},t_{0},t_{+}) \in \R^3$, define the cumulant generating function as the limit
\[
\Lambda(t) = \lim_{j\to\infty} \frac{1}{j} \log \EE\big[\ee^{j \langle t, Z_j \rangle}\big],
\]
where $\langle \cdot,\cdot \rangle$ denotes the standard inner product. Since
\[
j \langle t,Z_j \rangle = \sum_{i=1}^j \langle t,h(W_i) \rangle,
\]
the analysis of $\Lambda$ reduces to the study of an additive functional of the Markov chain $\{W_i\}_{i \in \N}$.

From the definitions of $h^{\chi}$, $\chi \in S$, we see that the vector $h(W_i)$ depends only on whether or not $\ell_{i-1} = 0$ and on the pair $(\ell_i,\ell_i^{\prime})$. Put
\[
\eta_i = \mathbbm{1}_{\{\ell_i>0\}}.
\]
Since $h(W_i)$ depends only on $(\ell_{i-1},\ell_i,\ell_i^{\prime})$ and $\{(\ell_i,\ell_i^{\prime})\}_{i\in\N_{-1}}$ is i.i.d., the conditional law of $(h(W_i),\eta_i)$ given $\eta_{i-1}$ does not depend on $i$. For $t \in \R^3$, define the $2 \times 2$ matrix $M(t) = (M(t)_{r,s})_{r,s \in \{0,1\}}$ by
\[
M(t)_{r,s} = \EE\big[\ee^{\langle t,h(W_1)\rangle}\, \mathbbm{1}_{\{\eta_1 = s\}} \,\big|\, \eta_{0} = r \big].
\]
A straightforward computation gives, for $t_0 < -\log p$,
\[
M(t) =
\begin{pmatrix}
q\,(p\,\ee^{t_-} + q\,\ee^{t_0})
&
q\,p\, \ee^{2t_-+t_+} + \frac{q\,p^2\, \ee^{2t_-+2t_+}}{1 - p\ee^{t_0}}
\\[1.2em]
q
&
q\,p\, \ee^{t_-+t_+} + \frac{q\, p^2\, \ee^{t_- + 2t_+}}{1 - p\ee^{t_0}}
\end{pmatrix},
\]
where $q = 1-p$.

\begin{lemma}{\bf[Matrix representation of the moment generating function]}
\label{lem:matrix-rep}
For every $j\in\N$ and every $t=(t_{-},t_{0},t_{+}) \in \R^3$ with $t_0 < \log(1/p)$,
\[
\EE\big[\ee^{j\langle t , Z_j\rangle}\big] = \mu M(t)^j \mathbf{1},
\]
where
\[
\mu = (q,p), \qquad \mathbf{1}=(1,1)^{\mathsf T}.
\]
\end{lemma}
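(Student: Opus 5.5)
The plan is a standard transfer-matrix argument: condition successively on the indicators $\eta_i=\mathbbm{1}_{\{\ell_i>0\}}$ and peel off one factor of $M(t)$ per step.

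\textbf{Step 1: locality and conditional independence.} First record that, by the definitions of $h^\chi$, each $h(W_i)$ is a function of $(\eta_{i-1},\ell_i,\ell_i^\prime)$ only. Indeed, $h(W_i)$ depends on $\ell_{i-1}$ only through $\mathbbm{1}_{\{\ell_{i-1}=0\}}=1-\eta_{i-1}$, and it does not depend on $(\ell_{i-2},\ell_{i-2}^\prime,\ell_{i-1}^\prime)$. Write $h(W_i)=\tilde h(\eta_{i-1},\ell_i,\ell_i^\prime)$.

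Since the pairs $(\ell_i,\ell_i^\prime)$, $i\in\N_{-1}$, are i.i.d., the following holds conditionally on $\mathcal F_{i-1}=\sigma((\ell_k,\ell_k^\prime)\colon k\le i-1)$: the pair $(\tilde h(\eta_{i-1},\ell_i,\ell_i^\prime),\eta_i)$ depends on the past only through $\eta_{i-1}$. Its conditional law is the same as that of $(h(W_1),\eta_1)$ given $\eta_0=r$. The latter is well defined because $\eta_0$ is independent of $(\ell_1,\ell_1^\prime)$.

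\textbf{Step 2: induction.} Prove by induction on $j$ that, for $r\in\{0,1\}$,
\[
\EE\Big[\ee^{\sum_{i=1}^j\langle t,h(W_i)\rangle}\,\mathbbm{1}_{\{\eta_j=r\}}\Big]=\big(\mu M(t)^j\big)_r .
\]
The base $j=0$ is the identity $\PP(\eta_0=r)=\mu_r$. Here $\PP(\eta_0=0)=\zeta(0)=q$ and $\PP(\eta_0=1)=p$, which matches $\mu=(q,p)$ when index $0$ is the first coordinate.

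For the inductive step, split according to the value of $\eta_{j-1}$ and condition on $\mathcal F_{j-1}$. By Step 1, the conditional expectation of $\ee^{\langle t,h(W_j)\rangle}\mathbbm{1}_{\{\eta_j=r\}}$ given $\mathcal F_{j-1}$ equals $M(t)_{\eta_{j-1},r}$. Summing over $\eta_{j-1}\in\{0,1\}$ gives
\[
\big(\mu M(t)^{j-1}\big)_0\,M(t)_{0,r}+\big(\mu M(t)^{j-1}\big)_1\,M(t)_{1,r}=\big(\mu M(t)^{j}\big)_r .
\]
Summing over $r$ then yields $\EE[\ee^{j\langle t,Z_j\rangle}]=\mu M(t)^j\mathbf 1$, since $jZ_j=\sum_{i=1}^j h(W_i)$.

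\textbf{Step 3: finiteness and the matrix entries.} All quantities involved are nonnegative, so Tonelli justifies the conditioning. It remains to check that the entries of $M(t)$ are finite exactly when $t_0<\log(1/p)$. The only unbounded component is $h^0$, which is of order $\ell_1$, so the entries involve sums $\sum_k p^k\ee^{t_0k}$. These converge precisely when $p\ee^{t_0}<1$.

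A sanity check of the explicit formula for $M(t)$ is optional, since the lemma only uses the definition of $M$. Still, I would verify it by splitting into the cases $\ell_1=0$ (with subcases $\ell_1^\prime=0$ and $\ell_1^\prime>0$), $\ell_1=1$, and $\ell_1\ge2$. For $\ell_1\ge2$ one sums the geometric series $\sum_{k\ge2}q p^k \ee^{t_0(k-2)}$.

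\textbf{Main obstacle.} I expect the main difficulty to be bookkeeping rather than analysis: getting the ordering convention of rows, columns and $\mu$ right so that row $r$ corresponds to $\eta_0=r$, and confirming carefully that $h$ depends on the past only through $\eta_{i-1}$. The remaining steps are routine.
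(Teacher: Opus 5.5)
Your proposal is correct and is essentially the paper's argument. Both use that $h(W_i)$ depends only on $(\eta_{i-1},\ell_i,\ell_i^\prime)$ with $(\ell_i,\ell_i^\prime)$ i.i.d., and both peel off one factor of $M(t)$ per step; the only difference is that the paper runs the recursion backward via $A^{j_1,j_2}_r=\sum_{s}M(t)_{r,s}A^{j_1+1,j_2}_s$ and averages over $\eta_0\sim\mu$ at the end, whereas you run it forward on the row vector $\mu M(t)^j$ by conditioning on $\mathcal F_{j-1}$.
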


\begin{proof}
Let $t = (t_{-},t_{0},t_{+}) \in \R^3$ with $t_0 < \log(1/p)$. For $j_1 , j_2 \in\N$ such that $j_1 \leq j_2$, set
\[
A^{j_1 ,j_2}_r = \EE\bigg[\ee^{\sum_{i=j_1}^{j_2} \langle t,h(W_i)\rangle} \,\Big|\, 
\eta_{j_1-1}=r \bigg],\qquad r\in\{0,1\}.
\]
Then
\[ 
\begin{aligned}
A^{j_1 ,j_2}_{\eta_{j_1-1}} 
&= \EE\bigg[\ee^{\sum_{i=j_1}^{j_2} \langle t,h(W_i)\rangle} \,\Big|\, \eta_{j_1-1} \bigg]\\
&= A^{j_1 ,j_2}_0\,\mathbbm{1}_{\{\eta_{j_1-1}=0\}} + A^{j_1 ,j_2}_1\,\mathbbm{1}_{\{\eta_{j_1-1}=1\}}
\qquad \text{a.s.}
\end{aligned}
\]
Also set $A^{j_2+1,j_2}_0 = A^{j_2+1,j_2}_1= 1$. We claim that
\[
A^{j_1 ,j_2}_r = \sum_{s\in\{0,1\}} M(t)_{r,s} \, A^{j_1+1 ,j_2}_s.
\]
The proof goes as follows. For $j\in\N$, let $\mathcal{G}_{j}$ be the sigma-field generated by $(\eta_{j-1},\ell_{j},\ell_{j}^{\prime})$. By the tower property, we have
\[
\begin{aligned}
A^{j_1,j_2}_r 
&= \EE\bigg[ \ee^{\langle t,h(W_{j_1})\rangle}\, 
\EE\Big[\ee^{\sum_{i=j_1+1}^{j_2} \langle t,h(W_i)\rangle} \,\Big|\, \mathcal{G}_{j_1} \Big] 
\,\Big|\, \eta_{j_1-1}=r \bigg].
\end{aligned}
\]
Since $\{(\ell_i,\ell_i^{\prime})\}_{i\in\N_{-1}}$ is i.i.d., the random variables $\{(\ell_i,\ell_i^{\prime})\}_{i\in\N_{ j_1+1}}$ are independent of $\mathcal{G}_{j_1}$. Moreover, for $i\geq j_1+1$, the quantity $h(W_i)$ depends on the past only through $\eta_{j_1}$. Hence
\[
\EE\Big[\ee^{ \sum_{i=j_1+1}^{j_2} \langle t,h(W_i)\rangle} \,\Big|\, \mathcal{G}_{j_1} \Big]
= A^{j_1+1,j_2}_{\eta_{j_1}} \qquad \text{a.s.}
\]
Therefore, for every $r\in\{0,1\}$,
\[
\begin{aligned}
A^{j_1,j_2}_r 
&= \EE\bigg[\ee^{\langle t,h(W_{j_1})\rangle}\, A^{j_1+1,j_2}_{\eta_{j_1}} \,\Big|\, \eta_{j_1-1}=r \bigg]\\
&= \sum_{s\in\{0,1\}} \EE\bigg[\ee^{\langle t,h(W_{j_1})\rangle}\, \mathbbm{1}_{\{\eta_{j_1}=s\}} 
\,\Big|\, \eta_{j_1-1}=r \bigg]\, A^{j_1+1,j_2}_s\\
&= \sum_{s\in\{0,1\}} M(t)_{r,s}\,A^{j_1+1,j_2}_s,
\end{aligned}
\]
which proves the claim. 

For each $j\in\N$, iteration of the recursion gives
\[
\begin{aligned}
A^{1,j}_r 
&= \sum_{s_{1},\ldots, s_{j}\in\{0,1\}} M(t)_{r,s_1}\,M(t)_{s_1,s_2} \times\cdots\times M(t)_{s_{j-1},s_j}\\
&= \sum_{s\in\{0,1\}} M(t)^j_{r,s} = (M(t)^j \mathbf{1})_r, \qquad r\in\{0,1\},
\end{aligned}
\]
where $(M(t)^j \mathbf{1})_r$ denotes the entry of the vector $M(t)^j \mathbf{1}$ corresponding to state $r$. Finally, since
\[
j\langle t,Z_j\rangle = \sum_{i=1}^j \langle t,h(W_i)\rangle,
\]
we get
\[
\begin{aligned}
\EE\big[\ee^{j\langle t,Z_j\rangle}\big] 
= \sum_{r\in\{0,1\}} \PP(\eta_0=r)\,A^{1,j}_r = \mu M(t)^j\mathbf{1},
\end{aligned}
\]
where $\mu = (q,p)$.
\end{proof}

Using the representation derived in Lemma~\ref{lem:matrix-rep}, we next identify the cumulant generating function associated with the process $\{Z_j\}_{j\in\N}$.

\begin{lemma}{\bf[Spectral representation of $\Lambda$]}
\label{lem:existence}
For every $t = (t_-,t_0,t_+) \in \R^3$, $\Lambda(t)$ exists in $[-\infty,\infty]$. In particular,
\[
\Lambda(t) =
\begin{cases}
\log \vartheta(t), & \text{if } t_0 < \log(1/p),\\
\infty, & \text{if } t_0\geq \log(1/p),
\end{cases}
\]
where $\vartheta(t)$ is the Perron--Frobenius eigenvalue of $M(t)$.
\end{lemma}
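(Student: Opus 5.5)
Two cases, according to whether $t_0<\log(1/p)$.

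\emph{Case $t_0<\log(1/p)$.} Lemma~\ref{lem:matrix-rep} gives $\EE[\ee^{j\langle t,Z_j\rangle}]=\mu M(t)^j\mathbf{1}$. In this regime every entry of $M(t)$ is finite. Every entry is also strictly positive: $q>0$, $p\in(0,1)$, the exponentials are positive, and $1-p\ee^{t_0}>0$. So $M(t)$ is a strictly positive $2\times2$ matrix, and the Perron--Frobenius theorem applies. It gives a simple eigenvalue $\vartheta(t)>0$ equal to the spectral radius, strictly larger in modulus than the other eigenvalue, with strictly positive right and left eigenvectors $v(t)$, $w(t)$ normalised by $\langle w(t),v(t)\rangle=1$. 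Hence
\[
M(t)^j=\vartheta(t)^j\big(v(t)w(t)^{\mathsf T}+O(\rho^j)\big)
\]
for some $\rho<1$. Since $\mu=(q,p)$ and $\mathbf{1}$ have strictly positive entries, $c(t)=(\mu v(t))(w(t)^{\mathsf T}\mathbf{1})>0$. Therefore
\[
\tfrac1j\log\mu M(t)^j\mathbf{1}=\log\vartheta(t)+\tfrac1j\log\big(c(t)+o(1)\big)\to\log\vartheta(t).
\]
Alternatively, without the spectral expansion one can sandwich $\mu M^j\mathbf{1}$ between $\min_r\mu_r\,\mathbf{1}^{\mathsf T}M^j\mathbf{1}$ and $\mathbf{1}^{\mathsf T}M^j\mathbf{1}$. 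Then one uses Gelfand's formula together with $\|M^j\|\asymp\mathbf{1}^{\mathsf T}M^j\mathbf{1}$ for nonnegative matrices. The limit is finite, so $\Lambda(t)\in\R$ here.

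\emph{Case $t_0\ge\log(1/p)$.} We show the expectation is already infinite for each fixed $j\ge1$, so $\Lambda(t)=\infty$. All $h^\chi\ge0$ and are bounded except $h^0$, and $h^0(W_i)\ge \ell_i-2$. So
\[
\ee^{j\langle t,Z_j\rangle}\ge \ee^{-C(t)j}\,\ee^{t_0\ell_j}
\]
with $C(t)<\infty$ depending only on $|t_-|,|t_+|,|t_0|$; this uses $t_0>0$, which holds here since $\log(1/p)>0$. Taking expectations gives $\EE[\ee^{t_0\ell_j}]=\sum_k q\,(p\ee^{t_0})^k=\infty$, because $p\ee^{t_0}\ge1$. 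Hence $\frac1j\log\EE[\ee^{j\langle t,Z_j\rangle}]=\infty$ for every $j$, and the limit exists and equals $\infty$. Since $\EE[\ee^{j\langle t,Z_j\rangle}]>0$ always, $\Lambda>-\infty$ in both cases, and $\Lambda(t)$ exists in $[-\infty,\infty]$.

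\emph{Main obstacle.} The only delicate point is positivity of the boundary term $c(t)$. It is needed so that no cancellation kills the leading exponential rate. I handle it through the strict positivity of all entries of $M(t)$, which forces strictly positive Perron vectors. I will also check the matrix entries against the definition of $h$ to confirm that none vanishes; the $(1,0)$ entry is $q>0$, and all others are sums of positive terms. This irreducibility and aperiodicity also guarantees the spectral gap used above.
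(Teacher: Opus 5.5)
Your proof is correct and follows essentially the same route as the paper. For $t_0<\log(1/p)$, the paper notes that $M(t)$ has strictly positive entries, applies Perron--Frobenius in the form $\frac{1}{j}\log(M(t)^j\mathbf{1})_r\to\log\vartheta(t)$, and sandwiches $\mu M(t)^j\mathbf{1}$ using the positivity of $\mu$, which is essentially your alternative argument. For $t_0\ge\log(1/p)$, it uses the same lower bound via $h^0\ge \ell-2$ with the remaining terms bounded, taking $\ell_1$ where you take $\ell_j$.
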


\begin{proof}
Fix $t = (t_-,t_0,t_+) \in \R^3$. First suppose that $t_0 < \log(1/p)$. Then all entries of $M(t)$ are finite and strictly positive. Hence $M(t)$ is irreducible. By the Perron--Frobenius theorem (see, for instance, \cite[Theorem 3.1.1]{DZ}), $M(t)$ has a Perron--Frobenius eigenvalue $\vartheta(t)>0$ such that, for every positive vector $\mathbf{v}=(v_0,v_1)^{\mathsf T} \in\R^2$, 
\[
\lim_{j\to\infty} \frac{1}{j} \log (M(t)^j\mathbf{v})_r = \log\vartheta(t), \qquad r\in\{0,1\},
\]
where $(M(t)^j\mathbf{v})_r$ denotes the entry of the vector $M(t)^j\mathbf{v}$ indexed by $r$. In particular, taking $\mathbf{v} = \mathbf{1} = (1,1)^{\mathsf T}$, we get
\[
\lim_{j\to\infty} \frac{1}{j} \log (M(t)^j\mathbf {1})_r = \log\vartheta(t), \qquad r\in\{0,1\}.
\]
Since $\mu=(q,p)$ has strictly positive coordinates, we have
\[
q (M(t)^j\mathbf{1})_0 \leq \mu M(t)^j\mathbf{1} 
\leq 2\max\big\{(M(t)^j\mathbf{1})_0 , (M(t)^j\mathbf{1})_1\}.
\]
Therefore, by Lemma~\ref{lem:matrix-rep},
\[
\Lambda(t) = \lim_{j\to\infty} \frac{1}{j} \log \mu M(t)^j\mathbf{1} = \log\vartheta(t).
\]

Next, suppose that $t_0 \geq \log(1/p)$. Since $0 \leq h^- \leq 2$ and $0 \leq h^+\leq 2$, we have
\[
\sum_{i=1}^j \langle t,h(W_i)\rangle \geq t_0\, h^0(W_1) - 2j\,(|t_-| + |t_+|).
\]
Hence, noting also that $h^0(W_1)\geq \ell_1-2$, we get
\[
\EE\big[\ee^{j\langle t,Z_j\rangle}\big] \geq \ee^{-2j(|t_-|+|t_+|)}\, \EE\big[\ee^{t_0\,h^0(W_1)}\big] 
\geq \ee^{-2j(|t_-|+|t_+|)}\,\EE\big[\ee^{t_0(\ell_1-2)}\big] = \infty.
\]
Therefore $\Lambda(t) = \infty$.
\end{proof}


\paragraph{Step 6.\ Effective domain and regularity properties.}

By Lemma~\ref{lem:existence},
\[
\cD_{\Lambda} = \{t = (t_- , t_0 , t_+) \in \R^3\colon\, t_0 < \log(1/p)\}
\]
is the effective domain of $\Lambda$, which is an open set. Hence
\[
(0,0,0) \in \mathrm{int}(\cD_{\Lambda}) = \cD_{\Lambda}.
\]
Moreover, for every $t\in \cD_{\Lambda}$, all entries of $M(t)$ are finite, strictly positive and analytic functions of $t$. Since $M(t)$ is a $2 \times 2$ matrix, its Perron--Frobenius eigenvalue is given by
\[
\vartheta(t) = \frac{\mathrm{tr}(M(t)) + \sqrt{(\mathrm{tr}(M(t)))^2 - 4\,\mathrm{det}(M(t))}}{2},
\]
where $\mathrm{tr}(M(t))$ and $\mathrm{det}(M(t))$ denote the trace and the determinant of $M(t)$, respectively. Since
\[
(\mathrm{tr}(M(t)))^2 - 4\,\mathrm{det}(M(t)) = \big(M(t)_{0,0}-M(t)_{1,1}\big)^2 + 4\, M(t)_{0,1}\,M(t)_{1,0} > 0,
\]
it follows that $\vartheta$ is analytic on $\cD_{\Lambda}$. Since $\Lambda(t) = \log \vartheta(t)$ and $\vartheta(t)>0$ for $t \in \cD_{\Lambda}$, it follows that $\Lambda$ is analytic on $\cD_{\Lambda}$.

We next establish the regularity properties of $\Lambda$ that are needed for the application of the G\"artner-Ellis theorem.

\begin{lemma}{\bf[Smoothness and steepness of $\Lambda$]}
\label{lem:regularity}
The cumulant generating function $\Lambda$ satisfies the following:
\begin{itemize}
\item[{\rm (a)}] $\Lambda$ is differentiable on $\cD_{\Lambda}$. 
\item[{\rm (b)}] $\Lambda$ is lower semi-continuous on $\R^3$. 
\item[{\rm (c)}] $\Lambda$ is steep at $\partial\cD_{\Lambda}$, i.e.,
\[
\lim_{t \to \partial\cD_{\Lambda}\colon\, t\in \cD_{\Lambda}}\|\nabla \Lambda(t)\|_2 = \infty,
\]
where $\nabla$ denotes the gradient operator and $\|\cdot\|_2$ denotes the Euclidean norm.
\end{itemize}
\end{lemma}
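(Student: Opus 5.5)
Parts (a) and (b) will follow from Step~6, and the real work is in (c). For (a), on the open set $\cD_{\Lambda}=\{t_0<\log(1/p)\}$ we already have $\Lambda=\log\vartheta$ with $\vartheta$ analytic and strictly positive, so $\Lambda$ is analytic and in particular differentiable. For (b) we argue pointwise. If $t\in\cD_\Lambda$, lower semicontinuity at $t$ follows from continuity there. If $t_0\ge\log(1/p)$, then $\Lambda(t)=\infty$, and we must show $\Lambda(t^{(n)})\to\infty$ for every sequence $t^{(n)}\to t$. Points of the sequence outside $\cD_\Lambda$ already have value $\infty$. For points inside, we bound $\vartheta$ from below by a diagonal entry of $M$ (Perron--Frobenius eigenvalue $\ge$ any diagonal entry of a nonnegative matrix). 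Specifically,
\[
\vartheta(t)\ \ge\ M(t)_{1,1}\ \ge\ \frac{q\,p^2\,\ee^{t_-+2t_+}}{1-p\,\ee^{t_0}},
\]
which tends to $\infty$ as $t_0\uparrow\log(1/p)$, uniformly for $(t_-,t_+)$ in compact sets. Hence $\liminf\Lambda(t^{(n)})=\infty$.

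For the steepness in (c), take $t^{(n)}\in\cD_\Lambda$ with $t^{(n)}\to\bar t\in\partial\cD_\Lambda$, so that $\bar t_0=\log(1/p)$ and $(t^{(n)}_-,t^{(n)}_+)$ stays bounded. It suffices to show $\partial_{t_0}\Lambda(t^{(n)})\to\infty$. Write $\varepsilon=1-p\,\ee^{t_0}\downarrow0$. Every entry of $M$ has the form $a+b/\varepsilon$ with $a,b\ge0$ and $\partial_{t_0}\varepsilon=-(1-\varepsilon)$, so $\partial_{t_0}M_{r,s}=a'+b(1-\varepsilon)/\varepsilon^2$. In both right-column entries $b>0$, bounded away from $0$ and $\infty$ along the sequence. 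The derivative can then be computed in either of two ways.

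First, we can use the explicit formula $\vartheta=\tfrac12\big(\mathrm{tr}M+\sqrt{\mathrm{tr}^2-4\det}\big)$ and expand in $\varepsilon$. Since $M_{0,0}$ and $M_{1,0}$ stay bounded while the second column is of order $1/\varepsilon$, we get $\vartheta=M_{1,1}+O(1)=c/\varepsilon+O(1)$ with $c=qp^2\ee^{t_-+2t_+}>0$. Differentiating, $\partial_{t_0}\vartheta\sim c/\varepsilon^2$, so $\partial_{t_0}\Lambda=\partial_{t_0}\vartheta/\vartheta\sim1/\varepsilon\to\infty$.

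Second, and more cleanly, we can use the Hellmann--Feynman formula $\partial\vartheta=\frac{u^{\mathsf T}(\partial M)v}{u^{\mathsf T}v}$ with positive left and right Perron eigenvectors $u,v$. Since $\partial_{t_0}M\ge M\cdot\frac{1-\varepsilon}{\varepsilon}$ entrywise on the second column, and the eigenvectors have $v_1/v_0$ bounded below, this again yields $\partial_{t_0}\Lambda\gtrsim 1/\varepsilon$.

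The main obstacle is making the asymptotic expansion uniform along arbitrary sequences approaching the boundary, and checking that cancellations in $\mathrm{tr}^2-4\det$ do not destroy the leading order. I will control this directly: $\det M=O(1/\varepsilon)$, because the $1/\varepsilon$ terms enter linearly. Hence $\sqrt{\mathrm{tr}^2-4\det}=M_{1,1}+O(1)$, and the same expansion applies to derivatives since all entries are explicit rational functions of $\ee^{t_0}$. Once $\|\nabla\Lambda\|_2\ge|\partial_{t_0}\Lambda|\to\infty$ is established, (c) follows.
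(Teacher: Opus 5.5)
Your proposal is correct and follows essentially the paper's route: analyticity of $\log\vartheta$ for (a), blow-up of $\vartheta$ through the $1/(1-p\,\ee^{t_0})$ entries of $M(t)$ for (b) (your bound $\vartheta\ge M(t)_{1,1}$ makes this step more explicit than the paper's), and the trace/discriminant expansion for (c), which gives $\partial_{t_0}\Lambda\sim 1/(1-p\,\ee^{t_0})$. One small slip in your optional Hellmann--Feynman variant: the entrywise bound $\partial_{t_0}M_{r,1}\ge M_{r,1}\frac{1-\varepsilon}{\varepsilon}$ is not literally true, because the right-column entries contain the bounded $t_0$-independent terms $a_r$ ($q p\,\ee^{2t_-+t_+}$ and $q p\,\ee^{t_-+t_+}$); the correct identity is $\partial_{t_0}M_{r,1}=(M_{r,1}-a_r)\frac{1-\varepsilon}{\varepsilon}$, which still suffices since $a_r/M_{r,1}=O(\varepsilon)$.
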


\begin{proof}
(a) Since $\Lambda$ is analytic on $\cD_{\Lambda}$, it is differentiable on $\cD_{\Lambda}$.

\medskip\noindent
(b) Fix $t = (t_- , t_0 , t_+)\in \R^3$. Let $t_n \to t$, as $n\to\infty$. We claim that
\[
\Lambda(t) \leq \liminf_{n\to\infty} \Lambda(t_n).
\]
If $t \in \cD_{\Lambda}$, then $\Lambda$ is continuous at $t$, and the claim follows. Now suppose that $t \notin \cD_{\Lambda}$. Then $t_0 \geq -\log p$ and $\Lambda(t) = \infty$. Write $t_n = (t_{n,-},t_{n,0},t_{n,+})$. We consider the following two cases separately:

\medskip\noindent
\textit{Case 1}: Let $t_0 > \log(1/p)$. Choose $\varepsilon>0$ such that
\[
t_0 > \log(1/p) + \varepsilon.
\]
Since $t_n\to t$ as $n\to\infty$, there exists $N_{\varepsilon} \in \N$ such that, for all $n\geq N_{\varepsilon}$,
\[
t_{n,0} > t_0 - \tfrac12 \varepsilon > \log(1/p) + \tfrac12 \varepsilon.
\]
Hence $t_n \notin \cD_{\Lambda}$ for all $n\geq N_{\varepsilon}$. Therefore $\Lambda(t_n)=\infty$ for all $n \geq N_{\varepsilon}$. Consequently,
\[
\liminf_{n\to\infty} \Lambda(t_n) \geq \inf_{n\geq N_{\varepsilon}} \Lambda(t_n) = \infty = \Lambda (t),
\]
which verifies the claim in this case.

\medskip\noindent
\textit{Case 2}: Let $t_0 = \log(1/p)$. If $t_n\notin\cD_{\Lambda}$ eventually, then the claim follows as above. Otherwise, there exists a subsequence $\{t_{n_k}\}_{k\in\N}$ such that
\[
\liminf_{n\to\infty} \Lambda(t_n) = \lim_{k\to \infty} \Lambda(t_{n_k}),
\]
and $t_{n_k}\in\cD_{\Lambda}$ for all $k\in\N$. Since 
\[
\lim_{k\to\infty} t_{n_{k},0} = \lim_{n\to\infty} t_{n,0} = \log(1/p),
\]
we have 
\[
\lim_{k\to\infty}(1- p\,\ee^{t_{n_k,0}}) = 0.
\]
Hence $\lim_{k\to\infty} M(t_{n_k})_{0,1} = \infty$ and $\lim_{k\to\infty} M(t_{n_k})_{1,1} = \infty$. Consequently, $\lim_{k\to\infty} \vartheta(t_{n_k}) = \infty$, and therefore
\[
\lim_{k\to\infty} \Lambda(t_{n_k}) = \infty.
\]
Hence $\liminf_{n\to\infty} \Lambda(t_n) = \infty = \Lambda(t)$, which verifies the claim in this case. Since $t\in \R^3$ is arbitrary, the claim implies that $\Lambda$ is lower semi-continuous on $\R^3$.

\medskip\noindent
(c) For $t = (t_-,t_0,t_+) \in \cD_{\Lambda}$, set
\[
C(t) = \big(M(t)_{0,0}-M(t)_{1,1}\big)^2 + 4M(t)_{0,1}M(t)_{1,0}.
\]
Then
\[
\frac{\partial\Lambda(t)}{\partial t_0} = \frac{1}{2\vartheta(t)} \left[A(t)+\frac{B(t)}{\sqrt{C_(t)}}\right],
\]
where
\[
A(t)=\frac{\partial}{\partial t_0}\mathrm{tr}(M(t))
\]
and
\[
B(t) = \big(M(t)_{0,0}-M(t)_{1,1}\big)\,\frac{\partial}{\partial t_0} \big(M(t)_{0,0}-M(t)_{1,1}\big) 
+ 2M(t)_{1,0}\,\frac{\partial}{\partial t_0}M(t)_{0,1}.
\]
Set $\alpha_{t_0} = 1-p\,\ee^{t_0}$ and $\alpha_{t_-,t_+} = q\,p^2\,\ee^{t_- + 2 t_+}$. As $t \to \partial\cD_{\Lambda}$, we have $t_0 \to \log(1/p)$ and therefore $\alpha_{t_0}\to 0$. In this case, 
\[
M(t)_{1,1} \sim \frac{\alpha_{t_-,t_+}}{\alpha_{t_0}},
\qquad
M(t)_{0,1} \sim \frac{\ee^{t_-}\alpha_{t_-,t_+}}{\alpha_{t_0}},
\qquad
M(t)_{0,0}-M(t)_{1,1} \sim -\frac{\alpha_{t_-,t_+}}{\alpha_{t_0}}.
\]
Hence
\[
C(t) \sim \frac{\alpha_{t_-,t_+}^2}{\alpha_{t_0}^2},
\qquad
\sqrt{C_t}\sim \frac{\alpha_{t_-,t_+}}{\alpha_{t_0}}.
\]
Since $\vartheta(t) = \tfrac12[\mathrm{tr}(M(t)) + \sqrt{C(t)}]$ and  $\mathrm{tr}(M(t)) \sim \frac{\alpha_{t_-,t_+}}{\alpha_{t_0}}$, we obtain
\[
\vartheta(t) \sim \frac{\alpha_{t_-,t_+}}{\alpha_{t_0}}.
\]
Furthermore, since $p\,\ee^{t_0}\sim 1$, we have
\[
A(t) = q^2\ee^{t_0}+\frac{p\,\ee^{t_0}\,\alpha_{t_-,t_+}}{\alpha_{t_0}^2}
\sim \frac{\alpha_{t_-,t_+}}{\alpha_{t_0}^2}
\]
and
\[
B(t) = \big(M(t)_{0,0}-M(t)_{1,1}\big)\,\left[q^2\ee^{t_0}-\frac{p\,\ee^{t_0}\,\alpha_{t_-,t_+}}{\alpha_{t_0}^2}\right] 
+ \frac{2\,q\,p\,\ee^{t_- +t_0}\,\alpha_{t_-,t_+}}{\alpha_{t_0}^2}
\sim \frac{\alpha_{t_-,t_+}^2}{\alpha_{t_0}^3}.
\]
Therefore
\[
A(t) + \frac{B(t)}{\sqrt{C(t)}} \sim \frac{2\alpha_{t_-,t_+}}{\alpha_{t_0}^2}.
\]
Combining the above estimates, we get
\[
\frac{\partial\Lambda(t)}{\partial t_0} \sim \frac{1}{\alpha_{t_0}}.
\]
Since $\alpha_{t_0}\to 0$, we get
\[
\|\nabla\Lambda(t)\|_2 \geq \left|\frac{\partial\Lambda(t)}{\partial t_0} \right| \to\infty,
\]
which settles the claim.
\end{proof}


\paragraph{Step 7.\ LDP for the empirical type averages.}

Steps 5--7 allow us to derive the LDP for the sequence of empirical type averages.

\begin{lemma}{\bf [Target LDP]}
\label{lem:LDP-type-fractions}
\begin{itemize}
\item[{\rm (a)}] 
The family of random variables $\{Z_j\}_{j\in\N}$ satisfies the LDP on $F_0$ with rate $j$ and with good rate function
\[
\Lambda^*(\theta) = \sup_{t\in\R^3} \{\langle t,\theta\rangle-\Lambda(t)\}, \qquad \theta\in F_0,
\]
which is the Fenchel--Legendre transform of $\Lambda$.
\item[{\rm (b)}] 
For every $\theta\in F_0 = [0,2] \times [0,\infty) \times [0,2]$,
\[
\Lambda^*(\theta) = \inf_{\nu\in G^{-1}(\theta)} I(\nu),
\]
where $G^{-1}(\theta)$ denotes the pre-image set $\{\nu\in\mathcal{P}(E)\colon\, G(\nu)=\theta\}$.
\end{itemize}
\end{lemma}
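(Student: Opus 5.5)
For part (a) we apply the G\"artner--Ellis theorem \cite[Theorem V.6]{FdH} to $\{Z_j\}_{j\in\N}$, viewed first as random variables in $\R^3$. Lemma~\ref{lem:existence} shows that $\Lambda(t)$ exists in $(-\infty,\infty]$ for every $t\in\R^3$, and that $0\in\mathrm{int}(\cD_\Lambda)$. Lemma~\ref{lem:regularity} supplies the remaining hypotheses: $\Lambda$ is differentiable on $\cD_\Lambda$, lower semi-continuous, and steep. Hence $\Lambda$ is essentially smooth, and the theorem gives the LDP on $\R^3$ with rate $j$ and good rate function $\Lambda^*$, which is convex and lower semi-continuous. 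Since $Z_j\in F_0$ almost surely and $F_0$ is closed in $\R^3$, the LDP restricts to $F_0$ with the same rate function. Goodness follows because $0\in\mathrm{int}(\cD_\Lambda)$ makes $\Lambda^*$ have compact level sets. Concretely, $\Lambda^*=\infty$ off $F_0$: for $\theta\notin F_0$ we let $t$ run off to $\pm\infty$ in the offending coordinate, using $0\le h^\pm\le 2$ and $h^0\ge 0$, so that $\Lambda(t)$ grows more slowly than $\langle t,\theta\rangle$.

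For part (b) we use contraction together with uniqueness of rate functions. Step~3 gives the LDP for $L_j$ in $\mathcal{P}(E)$ with good rate function $I$, and $Z_j=G(L_j)$. The map $G$ is not weakly continuous, since $h^0$ is unbounded, so we cannot contract directly. Instead we truncate. Set $h^0_M=h^0\wedge M$ and $G_M=(G^-,G^0_M,G^+)$; this map is continuous because $h^-,h^0_M,h^+$ are bounded. Then
\[
\|G_M(L_j)-G(L_j)\|\le \frac1j\sum_{i=1}^j (\ell_i-M)_+ .
\]
A Chernoff bound with $\EE[\ee^{s\ell_1}]<\infty$ for $s<\log(1/p)$ gives
\[
\limsup_{j\to\infty}\frac1j\log\PP\Big(\frac1j\sum_{i=1}^j(\ell_i-M)_+>\delta\Big)\le -s\delta+\log\EE\big[\ee^{s(\ell_1-M)_+}\big],
\]
and this tends to $-s\delta$ as $M\to\infty$. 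Letting $s\uparrow\log(1/p)$ only yields a finite exponential rate, so we do \emph{not} obtain exponential equivalence in the strict sense. This is the main obstacle. To overcome it, we use \cite[Theorem 4.2.23]{DZ} (approximate contraction) in its general form, which only requires that $G_M$ approximate $G$ uniformly on level sets of $I$ and that the exponential approximation hold in the limit $M\to\infty$ for each fixed $\delta$. For the uniform approximation on level sets we use the entropy inequality
\[
\int (h^0-M)_+\,\dd\nu \le \frac{1}{s}\Big(I(\nu)+\log\EE\big[\ee^{s(\ell_1-M)_+}\big]\Big),
\]
obtained via the Donsker--Varadhan variational formula applied to the kernel with fresh $\zeta$-marginal. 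It gives $\sup_{I(\nu)\le\alpha}|G_M(\nu)-G(\nu)|\le (\alpha+o(1))/s$. This bound is not small, but it is small after also letting $s\uparrow\log(1/p)$ and $M\to\infty$ in the correct order, so the order of limits will need care.

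An alternative route to (b), which we would attempt first because it sidesteps these tails, is purely convex-analytic. By the contraction principle applied to the bounded-truncated functionals, or directly by Varadhan's lemma applied to $\nu\mapsto\langle t,G(\nu)\rangle$ (valid for $t\in\cD_\Lambda$ by the exponential moment bound), we get
\[
\Lambda(t)=\sup_\nu\{\langle t,G(\nu)\rangle-I(\nu)\}.
\]
The function $\tilde J(\theta)=\inf_{\nu\in G^{-1}(\theta)}I(\nu)$ is convex, because $I$ is convex and $G$ is affine, and its Legendre transform is $\Lambda$. It remains to show that $\tilde J$ is lower semi-continuous. For this we take $\theta_n\to\theta$ with near-minimisers $\nu_n$ of bounded entropy; these are tight since $I$ is good, and uniform integrability of $h^0$ under bounded $I$, from the entropy inequality, gives $G(\nu_{n})\to G(\nu)$ along limits. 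Then $\tilde J=\tilde J^{**}=\Lambda^*$, which proves (b).
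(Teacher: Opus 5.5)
Part (a) is fine and is the paper's argument. For (b), your second route is also the paper's strategy: $\tilde J$ convex, $\tilde J^*=\Lambda$, then Fenchel--Moreau. One repairable point first: Varadhan's lemma cannot be applied \emph{directly} to $\nu\mapsto\langle t,G(\nu)\rangle$. That map is only upper semicontinuous for $t_0<0$ and only lower semicontinuous for $t_0>0$, so each sign gives one inequality. The paper supplies the other through the Perron--Frobenius eigenvector tilt in the Donsker--Varadhan formula ($t_0<0$), respectively truncation plus continuity of $\vartheta$ ($t_0>0$), and handles $t_0\ge\log(1/p)$ separately.

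The genuine gap is your lower semicontinuity step. Uniform integrability of $h^0$ on sublevel sets of $I$ is false, and the same failure sinks route~1. Take the admissible measures $\nu_n=\mu_n^{\otimes 3}$ with $\mu_n=\kappa_n\otimes\zeta$ and $\kappa_n=(1-\tfrac{r}{n})\delta_0+\tfrac{r}{n}\delta_n$, $r>0$. Then $I(\nu_n)=H(\kappa_n\,\|\,\zeta)\to\log(1/q)+r\log(1/p)$, and $\nu_n\to(\delta_0\otimes\zeta)^{\otimes 3}$ weakly, whose $G$-value is $(p,q,0)$. Yet $G(\nu_n)\to(p,q+r,0)$. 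So $G$ is not continuous on level sets of $I$, and $\sup_{I\le\alpha}|G_M-G|\not\to 0$ as $M\to\infty$. Your bound $(\alpha+o(1))/s$ has the correct order and does not shrink as $s\uparrow\log(1/p)$, whatever the order of limits. Nor is $G_M(L_j)$ an exponentially good approximation of $Z_j$, since a single piece $\ell_i\approx rj$ costs only $r\log(1/p)$. Hence \cite[Theorem~4.2.23]{DZ} is not available.

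In fact the lower semicontinuity you want is false, so (b) cannot hold on all of $F_0$. Let $\theta=(p,q+r,0)$. Suppose $\nu$ is admissible with $G^+(\nu)=0$. Then $c=0$ $\nu$-a.s., and $b=0$ a.s.\ because $\pi_2\nu=\pi_3\nu$. Hence $h^-+h^0=\mathbbm{1}_{\{c'>0\}}+\mathbbm{1}_{\{c'=0\}}=1$, so $G^-(\nu)+G^0(\nu)=1$. Therefore $\tilde J(\theta)=\infty$, whereas $\tilde J(G(\nu_n))\le I(\nu_n)$ stays bounded.

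Probabilistically: the event that all $\ell_i$ vanish except one of length $\approx rj$ has probability $\approx q^{j}p^{rj}$ and puts $Z_j$ near $\theta$. So by (a), $\Lambda^*(\theta)\le\log(1/q)+r\log(1/p)<\infty=\tilde J(\theta)$.

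The paper's Remark~\ref{rem:lsc} does not close this either. A finite limit from $\mathrm{int}(\cD)$ at a boundary point where $\Phi(x,s)=\emptyset$ violates lower semicontinuity rather than proving it.

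What your convex-analytic route does establish, once $\tilde J^*=\Lambda$ is proved as in the paper, is $\Lambda^*=\tilde J^{**}$, the lower semicontinuous hull of $\tilde J$. This coincides with $\tilde J$ on the interior of its effective domain, since a finite convex function on an open subset of $\R^3$ is continuous. It does not coincide on the part of the boundary reached by mass escaping to infinity in the $c$-coordinate, which can feed $G^0$ at cost $\log(1/p)$ per unit.
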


\begin{proof}
(a) The claim follows from Lemma~\ref{lem:regularity} and the G\"artner--Ellis theorem \cite[Theorem V.6]{FdH}.

\medskip\noindent
(b) Define
\[
\bar{I}(\theta) = \left\{\begin{array}{ll}
\inf_{\nu\in G^{-1}(\theta)} I(\nu), &\theta\in F_0,\\
\infty, &\theta \notin F_0.
\end{array}
\right.
\]
Also define $\Lambda^*(\theta)=\infty$ for $\theta \notin F_0$. We need to show that $\bar{I}(\theta) = \Lambda^*(\theta)$ for all $\theta \in \R^3$. Since $\R^3$ equipped with the Euclidean topology is a locally convex Hausdorff topological vector space, and the dual space of $\R^3$ is canonically isomorphic to $\R^3$, the following duality argument can be applied.

Since $\nu \mapsto I(\nu)$ is convex and $\nu \mapsto G(\nu)$ is linear, it follows that $\theta \mapsto \bar{I}(\theta)$ is convex on $\R^3$. Let $\nu^* = (\zeta^{\otimes 2})^{\otimes 3}$, and set $\theta^* = G(\nu^*)$. Since $I(\nu^*) = 0$, we have
\[
\bar{I}(\theta^*) \leq I(\nu^*) = 0.
\]
Hence $\bar{I}(\theta^*) < \infty$, and therefore $\bar{I} \not\equiv \infty$. In Section~\ref{sec:ProofLDP2} we show that $\bar{I}$ is lower semi-continuous (see Remark~\ref{rem:lsc}). 

Next, consider the Fenchel--Legendre transform of $\bar{I}$ given by
\[
\bar{I}^*(t) = \sup_{\theta \in \R^3} \big[\langle t,\theta\rangle- \bar{I}(\theta)\big]
= \sup_{\theta \in F_0} \big[\langle t,\theta\rangle- \bar{I}(\theta)\big], \qquad t \in \R^3.
\]
To complete the proof, it remains to show that
\[
(\spadesuit) \qquad \bar{I}^*(t) = \Lambda(t), \qquad t \in \R^3.
\]
Indeed, once this identity is established, the Fenchel--Moreau theorem \cite[Theorem 1.11]{Brezis} gives
\[
\bar{I}(\theta) = \bar{I}^{**}(\theta) = \Lambda^*(\theta), \qquad \theta\in F_0,
\]
where $\bar{I}^{**}$ denotes the Fenchel--Legendre transform of $\bar{I}^*$. This is in accordance with the fact that the rate function in an LDP is necessarily unique \cite[Chapter III]{FdH}. 

\medskip\noindent
{\bf Proof of $(\spadesuit)$.} For $t = (t_- , t_0 , t_+) \in \R^3$, we have
\[
\bar{I}^*(t) = \sup_{\theta \in F_0} \big[\langle t,\theta\rangle - \inf_{\nu\in G^{-1}(\theta)} I(\nu)\big]
= \sup_{\theta \in F_0} \sup_{\nu\in G^{-1}(\theta)} \big[\langle t,G(\nu)\rangle- I(\nu)\big]
= \sup_{\nu\in\mathcal{P}(E)} \big[\langle t,G(\nu)\rangle - I(\nu)\big].
\]
Since $E$ is endowed with the discrete topology, each function $h^\chi$ is continuous on $E$. Hence, for $\chi\in\{-,+\}$ the functionals $G^{\chi}$ are continuous with respect to the weak topology, because the integrands $h^\chi$ are bounded and continuous functions. In contrast, $h^0$ is unbounded, so $G^0$ is not continuous. For $M \in \N$, define the truncated function $h^0_M = h^0 \wedge M$ and the corresponding functional
\[
G_{M}^{0}(\nu) = \int_{E} h^0_M\, \mathrm{d}\nu, \qquad \nu\in\mathcal{P}(E).
\]
Then $G^{0}_{M}$ is continuous on $\mathcal{P}(E)$. For $\nu\in\mathcal{P}(E)$, set
\[
G_{M}(\nu) = \big(G^{-}(\nu),G_{M}^{0}(\nu),G^{+}(\nu)\big).
\]
Since $G_{M}$ is continuous and non-decreasing in $M$, the map 
\[
\nu\mapsto G_{M}(\nu)
\]
is continuous from $\mathcal{P}(E)$ into $F_{M} = [0,2] \times [0,M] \times [0,2]$, and non-decreasing in $M$.

\medskip\noindent
\underline{Case $t_0 \leq 0$}. For $t_0=0$, $G^0$ drops out and and Varadhan's Lemma gives
\[
\Lambda(t) = \bar{I}^*(t).
\]
For $t_0<0$, the map $\nu \mapsto \langle t , G(\nu)\rangle$ is upper semi-continuous under the weak topology, because $G^-$, $G^+$ are continuous and $G^0$ is lower semi-continuous (by the Portmanteau theorem, because it is the integral of the non-negative continuous function $h^0$). Moreover, since $G^\chi \leq 2$ for $\chi \in\{-,+\}$, we have $\langle t,G(L_j)\rangle \leq  2 |t_-| + 2|t_+|$. Therefore, for $M > 2 |t_-| + 2|t_+|$,
\[
\EE\big[\ee^{j \langle t,G(L_j)\rangle}\, \mathbbm{1}_{\{\langle t,G(L_j)\rangle \geq M\}}\big] = 0.
\]
Consequently,
\[
\lim_{M\to\infty}\limsup_{j\to\infty}\frac{1}{j}\log\EE\Big[\ee^{j \langle t,G(L_j)\rangle}\, 
\mathbbm{1}_{\{\langle t,G(L_j)\rangle \geq M\}}\Big] = -\infty,
\]
which, by \cite[Lemma 4.3.6]{DZ}, implies that
\[
\Lambda(t) \leq \bar{I}^*(t).
\]
To derive the reverse bound, we use the Donsker--Varadhan variational formula for the relative entropy \cite[Lemma 1.4.3(a) and Appendix C]{DE}. It suffices to show that
\[
\langle t, G(\nu) \rangle - I(\nu) \leq \Lambda(t) \qquad \forall\,\nu\in\mathcal{P}(E) \text{ admissible}.
\]

Let $\nu\in\mathcal{P}(E)$ be an admissible measure. From the Donsker--Varadhan variational formula it follows that
\[
I(\nu) = H\big(\nu ~\big\|~ \pi_{1,2}\nu \otimes \zeta^{\otimes 2}\big)
= \sup_{\psi\in \mathcal{B}(E)} \left\{ \int_{E} \psi\, \mathrm{d}\nu
- \log \int_{E} \mathrm{e}^{\psi}\,\mathrm{d}(\pi_{1,2}\nu \otimes \zeta^{\otimes 2})\right\},
\]
where $\mathcal{B}(E)$ denotes the space of bounded measurable functions $\psi\colon\,E\to\R$. Hence
\[
(\clubsuit) \qquad \int_{E} \psi\, \mathrm{d}\nu - I(\nu) 
\leq \log \int_{E} \mathrm{e}^{\psi}\,\mathrm{d}(\pi_{1,2}\nu \otimes \zeta^{\otimes 2}) \qquad \forall\,\psi\in \mathcal{B}(E).
\]
Let $\mathbf{u}=(u_0,u_1)^{\mathsf T}$ be the right eigenvector associated with the Perron--Frobenius eigenvalue $\ee^{\Lambda(t)}$, whose coordinates are strictly positive. Such an eigenvector exists by the Perron--Frobenius theorem (see \cite[Theorem 3.1.1(c)]{DZ}). Now, for $x=((a,a^\prime),(b,b^\prime),(c,c^\prime))\in E$, set
\[
f(x) = \langle t , h(x)\rangle + \log u_{\eta_1(x)} - \log u_{\eta_0(x)},
\]
where
\[
\eta_0(x) = \mathbbm{1}_{\{b>0\}},\qquad
\eta_1(x) = \mathbbm{1}_{\{c>0\}}.
\]
Since $\pi_{1,2}\nu = \pi_{2,3}\nu$, we have $\pi_{2}\nu = \pi_{3}\nu$, which implies that $\nu(\eta_0 =r) = \nu(\eta_1 =r)$ for all $r\in\{0,1\}$.
Consequently,
\[
\int_{E} \log u_{\eta_0}\, \mathrm{d}\nu = \int_{E} \log u_{\eta_1}\, \mathrm{d}\nu,
\]
and hence
\[
\int_{E} f\, \mathrm{d}\nu = \int_{E} \langle t , h \rangle\, \mathrm{d}\nu = \langle t , G(\nu) \rangle .
\]
Moreover, by Tonelli's theorem,
\[
\begin{aligned}
\int_{E} \mathrm{e}^{f}\,\mathrm{d}(\pi_{1,2}\nu \otimes \zeta^{\otimes 2}) 
&= \int_{E} \mathrm{e}^{\langle t,h \rangle}\,\frac{u_{\eta_1}}{u_{\eta_0}}\,
\mathrm{d}(\pi_{1,2}\nu \otimes \zeta^{\otimes 2})\\
&= \int_{(\N_0^2)^2}\frac{1}{u_{\eta_0}}\bigg(\int_{\N_0^2} \mathrm{e}^{\langle t,h\rangle}\, 
u_{\eta_1}\,\mathrm{d}\zeta^{\otimes 2}\bigg)\mathrm{d}\pi_{1,2}\nu \\
&= \int_{(\N_0^2)^2}\frac{1}{u_{\eta_0}}\bigg(\sum_{s\in\{0,1\}}u_s \int_{\N_0^2} 
\mathrm{e}^{\langle t , h \rangle}\, \mathbbm{1}_{\{\eta_1=s\}}\,
\mathrm{d}\zeta^{\otimes 2}\bigg)\mathrm{d}\pi_{1,2}\nu \\
&= \int_{(\N_0^2)^2}\frac{1}{u_{\eta_0}}\sum_{s\in\{0,1\}}u_s \,M(t)_{\eta_0 , s}\,\mathrm{d}\pi_{1,2}\nu,
\end{aligned}
\]
where in the last line we use the definition of $M(t)$ and the fact that $h(x)$ depends on $((a,a'),(b,b'))$ only through $\eta_0(x)$, so that the inner integral in the penultimate line is the conditional expectation defining $M(t)_{\eta_0,s}$. Now, using the eigenvalue equation $M(t)\,\mathbf{u} = \ee^{\Lambda(t)}\,\mathbf{u}$, we have 
\[
\sum_{s\in\{0,1\}}M(t)_{r,s}\,u_s = \ee^{\Lambda(t)}\,u_r, \qquad r\in\{0,1\}.
\]
Consequently, 
\[
\int_{E} \mathrm{e}^{f}\,\mathrm{d}(\pi_{1,2}\nu \otimes \zeta^{\otimes 2}) = \ee^{\Lambda(t)}.
\]
Finally, take $\psi = \psi_N = f\vee(-N)$, $N\in\N$, in $(\clubsuit)$. Since $t_0 < 0$, the function $f$ is bounded from above, say $ f\leq C$. Letting $N\to\infty$, and applying the monotone convergence theorem to $C-\psi_N$ and the dominated convergence theorem to $\ee^{\psi_N}$, we get that
\[
\langle t,G(\nu) \rangle - I(\nu) = \int_{E} f\, \mathrm{d}\nu - I(\nu) 
\leq \log \int_{E} \mathrm{e}^{f}\,\mathrm{d}(\pi_{1,2}\nu \otimes \zeta^{\otimes 2}) = \Lambda(t),
\]
which implies that
\[
\bar{I}^*(t) \leq \Lambda(t),
\]
and completes the proof of $(\spadesuit)$ in this case.

\medskip\noindent
\underline{Case $0 < t_0 < \log(1/p)$}. For $M\in\N$, define
\[
\Lambda_M(t) = \sup_{\nu\in\mathcal{P}(E)} \big\{\langle t,G_M(\nu)\rangle - I(\nu)\big\}.
\]
Since $G_M$ is bounded and continuous, Varadhan's Lemma \cite[Theorem~III.13]{FdH} yields
\[
\Lambda_M(t) = \lim_{j\to\infty}\frac{1}{j} \log\EE\big[\ee^{j\langle t,G_M(L_j)\rangle} \big].
\]
Moreover, since $t_0>0$ and $G_M^0 \leq G^0$,
\[
(\blacksquare) \qquad \Lambda_M(t) \leq \sup_{\nu\in\mathcal{P}(E)} 
\big\{ \langle t,G(\nu)\rangle - I(\nu) \big\} = \bar{I}^*(t), \qquad \forall\, M\in\N.
\]
Let $M_M(t)$ be the matrix obtained after replacing $h^0$ by $h_M^0$ in the definition of the matrix $M(t)$ introduced before Lemma~\ref{lem:matrix-rep}. The arguments of Lemmas~\ref{lem:matrix-rep} and \ref{lem:existence} remain valid with $h^0$ replaced by $h_M^0$. Hence
\[
\Lambda_M(t)=\log\vartheta(M_M(t)).
\]
Since $t_0 < \log(1/p)$, all entries of $M(t)$ are finite. Moreover, $h_M^0 \uparrow h^0$ as $M \to \infty$, and therefore, by the monotone convergence theorem,
\[
M_M(t)_{r,s} \uparrow M(t)_{r,s}, \qquad M\to\infty \qquad r,s \in \{0,1\}.
\]
Since the Perron--Frobenius eigenvalue depends continuously on the matrix entries, we have
\[
\vartheta(M_M(t)) \to \vartheta(M(t)), \qquad M \to \infty,
\]
where $\vartheta(M_M(t))$ is the Perron--Frobenius eigenvalue of $M_M(t)$. Consequently,
\[
\Lambda_M(t) = \log\vartheta(M_M(t)) \to \log\vartheta(t) = \Lambda(t), \qquad M\to\infty.
\]
Hence, from $(\blacksquare)$,
\[
\Lambda(t) \leq \bar{I}^*(t).
\]

The reverse bound is easy. Indeed, by Varadhan's Lemma applied to the bounded continuous functional $G_M$ for $M \in \N$, in combination with $t_0 > 0$ and $G^0(\nu) = \sup_{M\in\N} G^0_{M}(\nu)$ for all $\nu \in \mathcal{P}(E)$ by the monotone convergence theorem, we also have
\[
\begin{aligned}
\Lambda(t)
&\geq \sup_{M\in\N} \sup_{\nu\in\mathcal{P}(E)} \big[\langle t, G_{M}(\nu) \rangle - I(\nu)\big]\\
&= \sup_{\nu\in\mathcal{P}(E)} \sup_{M\in\N} \big[\langle t, G_{M}(\nu) \rangle - I(\nu)\big]
= \sup_{\nu\in\mathcal{P}(E)} \big[\langle t, G(\nu) \rangle - I(\nu)\big] = \bar{I}^*(t),
\end{aligned}
\]
and so $(\spadesuit)$ holds.

\medskip\noindent
\underline{Case $t_0 \geq \log(1/p)$}. In this case we know from Lemma~\ref{lem:existence} that $\Lambda(t) = \infty$, and so it remains to show that also $\bar{I}^*(t) = \infty$. First assume that $t_0 > \log(1/p)$. Let $n \in \N_2$ be arbitrary, and define the measure
\[
\nu_n = \delta_{((n,0),(n,0),(n,0))} \in \mathcal{P}(E).
\]
Note that $\nu_n$ is admissible because $\pi_{1,2}\nu_n = \pi_{2,3}\nu_n = \delta_{((n,0),(n,0))}$. Therefore
\[
\begin{aligned}
I(\nu_n) &= H(\nu_n ~\|~ \pi_{1,2}\nu_n \otimes \zeta^{\otimes 2})\\
& = -\log\, (\pi_{1,2}\nu_n \otimes \zeta^{\otimes 2})\big(\big\{(n,0),(n,0),(n,0)\big\}\big)\\
& = -\log \big(\zeta(n)\,\zeta(0)\big) \\
& = -n\log p - 2\log q.
\end{aligned}
\]
Noting that $G(\nu_n)=(1,n-2,2)$, we have
\[
\bar{I}^*(t) \geq \langle t,G(\nu_n)\rangle - I(\nu_n) = n\,(t_0 +\log p) 
+ 2\left(\tfrac12 t_- - t_0 + t_+ + \log q \right) \qquad \forall\, n \in \N_2.
\]
Since $t_0 > \log(1/p)$, letting $n\to\infty$, we get $\bar{I}^*(t)=\infty$, and so $(\spadesuit)$ holds.

Next, assume that $t_0 = \log(1/p)$. Suppose, by contradiction, that $\bar{I}^*(t) < \infty$. Choose $s = (t_- ,t_0 -1 ,t_+)$. Then $s$ belongs to the effective domain of $\bar{I}^*$ because $\bar{I}^*(s) = \Lambda(s) < \infty$. For $\lambda\in (0,1)$, set
\[
t_{\lambda} = (1-\lambda)\, t + \lambda\, s.
\]
Then $t_{\lambda}$ also belongs to the effective domain of $\bar{I}^*$. On the other hand, since $\bar{I}^*$ is convex and $\bar{I}^*(t_\lambda) = \Lambda(t_\lambda)$, we have
\[
\Lambda(t_{\lambda}) \leq (1-\lambda)\,\bar{I}^*(t)+\lambda\, \bar{I}^*(s).
\]
The right-hand side remains bounded as $\lambda \downarrow 0$ because both $\bar{I}^*(t)$ and $\bar{I}^*(s)$ are finite. However, $\Lambda(t_{\lambda}) = \log \vartheta(t_{\lambda}) \to \infty$ as $\lambda \downarrow 0$, which leads to a contradiction. Therefore $\bar{I}^*(t)=\infty$, and so $(\spadesuit)$ holds in this case as well.
\end{proof}


\paragraph{Step 8.\ Completion of the proof of Theorem \ref{LDP1}.}

By Lemma~\ref{lem:LDP-type-fractions}, the sequence of random elements $\{Z_j\}_{j\in\N}$ satisfies the LDP with values in the Polish space $F_0$, with rate $j$, and with good rate function $\Lambda^*$. Recalling from Step 4 that 
\[
Q_j = Z_j + \tfrac{1}{j} R,\qquad j \in \N_4,
\]
with $R$ a uniformly bounded remainder term, we conclude that $\{Q_j\}_{j\in\N_4}$ and $\{Z_j\}_{j\in\N_4}$ are exponentially equivalent and hence share the same LDP by \cite[Theorem~4.2.13]{DZ}.  Applying the homeomorphism $\Psi$ introduced in Step 4, which yields that $\Upsilon_j = \Psi(Q_j)$, and using the contraction principle \cite[Theorem~III.20]{FdH}, we obtain that the sequence $\{\Upsilon_j\}_{j\in\N_4}$ satisfies the LDP with values in the Polish space $(0,\infty) \times \mathcal{S}_2$, with rate $j$, and with good rate function
\[
J(x,s) = \inf_{\theta \in \Psi^{-1}(x,s)} \Lambda^*(\theta) = \Lambda^*(xs) = \inf_{\nu\in G^{-1}(xs)} I(\nu).
\]
Since $\|s\|_1 = 1$ for all $s\in\mathcal{S}_2$, the constraint $G(\nu) = x s$ is equivalent to $\|G(\nu)\|_1 = x$ and $G(\nu)/\|G(\nu)\|_1 = s$. Hence the rate function derived above coincides with
\[
J(x,s) = \inf_{\nu\in \Phi(x,s)} I(\nu),
\]
where
\[
\Phi(x,s) = \big\{\nu \in \mathcal{P}(E)\colon\,\|G(\nu)\|_1 = x,\,G(\nu)/\|G(\nu)\|_1 = s\big\}.
\]
This completes the proof of Theorem~\ref{LDP1}.
\end{proof}


\subsection{Proof of the second LDP theorem} 
\label{sec:ProofLDP2}

In this section we prove Theorem~\ref{LDP2}. 

\begin{proof}
(a) Write 
\[
\Phi(x,s) = \{\nu \in \mathcal{P}(E)\colon\,\|G(\nu)\|_1 = x,\,G(\nu) = xs\}.
\] 
Fix $s \in \mathcal{S}_2$. Pick $x_1, x_2 \in (0,\infty)$ with $x_1 \neq x_2$ and $\alpha \in (0,1)$. Note that if $\nu_1 \in \Phi(x_1,s)$ and $\nu_2 \in \Phi(x_2,s)$, then $\alpha \nu_1 + (1-\alpha) \nu_2 \in \Phi(\alpha x_1 + (1-\alpha) x_2,s)$, because $\nu \mapsto \|G(\nu)\|_1$ and $\nu \mapsto G(\nu)$ are linear. Hence
\[
\begin{aligned}
J(\alpha x_1 + (1-\alpha) x_2,s) 
&\leq \inf_{ {\nu_1 \in \Phi(x_1,s)} \atop {\nu_2 \in \Phi(x_2,s)} } I(\alpha \nu_1 + (1-\alpha) \nu_2)\\
&\leq \inf_{ {\nu_1 \in \Phi(x_1,s)} \atop {\nu_2 \in \Phi(x_2,s)} } [\alpha I(\nu_1) + (1-\alpha) I(\nu_2)]
= \alpha J(x_1,s) + (1-\alpha) J(x_2,s), 
\end{aligned}
\]
where the first inequality and the equality use the variational formula for $J$, while the second inequality uses the convexity of $\nu \mapsto I(\nu)$. Hence $J$ is convex in its first coordinate. A similar argument shows that $J$ is convex in its second coordinate. 

Note that $I(\nu) = 0$ if and only if $\nu = \pi_{1,2} \nu \otimes \zeta^{\otimes 2}$. Writing $\pi_j\nu$ for the $j$th marginal of $\nu$ on $\N_0^2$, and using the consistency relation $\pi_{1,2}\nu=\pi_{2,3}\nu$, we get $\nu = \pi_1\nu \otimes (\zeta^{\otimes 2})^{\otimes 2}$. Because $\pi_{1,2}\nu = \pi_{2,3}\nu$, it in turn follows that $\nu = (\zeta^{\otimes 2})^{\otimes 3} = \nu^*$.

A straightforward computation, based on the explicit form of the functions $h^\chi$ and $G^\chi$ and the formula for $\zeta$, gives
\[
\begin{aligned}
&G^-(\nu^*) = p[1+(1-p)+(1-p)^2], \quad G^0(\nu^*) = \frac{p}{1-p} -(p+p^2) + (1-p)^3, \quad G^+(\nu^*) = p + p^2,\\
&G^-(\nu^*) + G^0(\nu^*) + G^+(\nu^*) = \frac{p}{1-p} + 1 = \frac{1}{1-p}. 
\end{aligned}
\] 
Since 
\[
x^* = G^-(\nu^*) + G^0(\nu^*) + G^+(\nu^*), \qquad 
s^* = \frac{(G^-(\nu^*),G^0(\nu^*),G^+(\nu^*))}{G^-(\nu^*) + G^0(\nu^*) + G^+(\nu^*)},
\] 
this identifies the zero of the rate function as claimed.

\medskip\noindent
(b) The proof proceeds in 5 Steps. At the end of Step 5 we find how to compute the rate function on $\mathrm{int}(\cD)$, the interior of its effective domain. 

\medskip\noindent
\emph{Step 1.}
First, we carry out the infimum of $I(\nu)$ over $\nu \in \Phi(x,s)$ with the help of the technique of Lagrange multipliers. Abbreviate $\alpha = (a,a')$, $\beta = (b,b')$, $\gamma = (c,c')$. For $\nu\in\mathcal{P}(E)$ such that $\pi_{1,2}\nu=\pi_{2,3}\nu$, the rate function $I(\nu)$ can be written as 
\[
I(\nu) = \sum_{\alpha,\beta,\gamma \in \N_0^2} \nu_{\alpha\beta\gamma} \log \nu_{\alpha\beta\gamma}
- \sum_{\alpha,\beta \in \N_0^2} (\pi_{1,2}\nu)_{\alpha\beta} \log (\pi_{1,2}\nu)_{\alpha\beta}
- \sum_{\gamma\in\N_0^2} (\pi_3\nu)_\gamma \log \zeta_{\gamma},
\]
where the variables appear as indices, and we abbreviate $\zeta_\gamma = \zeta(c)\,\zeta(c')$. Define the Lagrangian
\[
\mathcal{L}(\nu) = I(\nu) + \lambda (|\nu| - 1) 
+ \sum_{\alpha,\beta \in \N_0^2} \lambda^{\alpha\beta} \big[(\pi_{1,2}\nu)_{\alpha\beta} - (\pi_{2,3}\nu)_{\alpha\beta}\big]
+ \sum_{\chi \in S} \lambda^\chi\, (G^\chi(\nu) - xs^\chi),
\]
where $\lambda$, $(\lambda^{\alpha\beta})_{\alpha,\beta\in\N_0^2}$ and $(\lambda^\chi)_{\chi \in S}$ are Lagrange multipliers that must be chosen such that the constraints $|\nu| = \int _E \dd\nu = 1$, $\pi_{1,2}\nu = \pi_{2,3}\nu$ and $G(\nu) = xs$ in $\Phi(x,s)$ are met, respectively. We do not need a Lagrange multiplier for the constraint $\sum_{\chi \in S} G^\chi(\nu) = x$, which is automatic because $\sum_{\chi \in S} s^\chi = 1$. Having chosen the Lagrangian, for fixed $\alpha,\beta,\gamma\in\N_0$ we compute 
\[
\frac{\partial \mathcal{L}(\nu)}{\partial \nu_{\alpha\beta\gamma}} 
= \log\left(\frac{\nu_{\alpha\beta\gamma}}{(\pi_{1,2}\nu)_{\alpha\beta}\,\zeta_\gamma}\right) 
+ \lambda + (\lambda^{\alpha\beta} - \lambda^{\beta\gamma}) + \sum_{\chi \in S} \lambda^\chi h^\chi_{\alpha\beta\gamma}. 
\]
Setting this equal to zero, we get
\[
\nu_{\alpha\beta\gamma} = (\pi_{1,2}\nu)_{\alpha\beta}\, 
\ee^{- \lambda - (\lambda^{\alpha\beta} - \lambda^{\beta\gamma}) - \sum_{\chi \in S} \lambda^\chi h^\chi_{\alpha\beta\gamma}} \zeta_\gamma.
\]
Next, use that $h^\chi_{\alpha\beta\gamma}$ does not depend on $\alpha$, i.e., $h^\chi_{\beta\gamma} = h^\chi_{\alpha\beta\gamma}$. Therefore, summing over $\gamma$, we obtain
\[
1 = \sum_{\gamma\in\N_0^2} \frac{\nu_{\alpha\beta\gamma}}{(\pi_{1,2}\nu)_{\alpha\beta}} = \sum_{\gamma\in\N_0^2}
\mathrm{e}^{-\lambda-(\lambda^{\alpha\beta}-\lambda^{\beta\gamma})-\sum_{\chi\in S}\lambda^{\chi} h^{\chi}_{\beta\gamma}}\,\zeta_{\gamma} \qquad \forall\,\alpha,\beta \in \N_0^2.
\]
Hence $\lambda^{\alpha\beta}$ does not depend on $\alpha$. Put $\lambda^{\alpha\beta} = \lambda^\beta$. Then, the last equation reads
\[ 
(\square) \qquad \lambda^\beta = - \lambda + \log \sum_{\gamma\in\N_0^2}
\mathrm{e}^{\lambda^\gamma - \sum_{\chi\in S} \lambda^{\chi} h^{\chi}_{\beta\gamma}}\,\zeta_\gamma \qquad \forall\,\beta \in \N_0^2.
\]
This equation determines $(\lambda^\beta)_{\beta\in\N_0^2}$ as a function of $\lambda$ and $(\lambda^\chi)_{\chi \in S}$. With this notation, summing out over $\alpha$, we obtain
\[
(\pi_{2,3}\nu)_{\beta\gamma} = (\pi_2\nu)_\beta\,P_{\beta\gamma}
\]
with
\[
(\blacksquare) \qquad P_{\beta\gamma} = \mathrm{e}^{-\lambda-(\lambda^{\beta}-\lambda^\gamma)
- \sum_{\chi\in S}\lambda^{\chi} h^{\chi}_{\beta\gamma}}\,\zeta_{\gamma}.
\]
Summing out also over $\gamma$, we see that 
\[
\sum_{\gamma\in \N_0^2} P_{\beta\gamma} = 1 \qquad \forall\,\beta \in \N_0^2,
\]
i.e., $P = (P_{\beta\gamma})_{\beta,\gamma\in\N_0^2}$ is a Markov transition kernel. Summing out over $\beta$ instead, we get $(\pi_3\nu)_\gamma = \sum_{\beta\in \N_0^2} (\pi_2\nu)_\beta\, P_{\beta\gamma}$. Since $\pi_{1,2}\nu = \pi_{2,3}\nu$ implies that $\pi_3\nu = \pi_2\nu$, the latter says that $\pi_2\nu = \rho$ with $\rho$ the invariant probability law of $P$, i.e.,  
\[
\rho_\gamma = \sum_{\beta\in\N_0^2} \rho_\beta P_{\beta\gamma} \qquad \forall\, \gamma \in \N_0^2. 
\]
Thus, we end up with the minimiser being
\[
\nu_{\alpha\beta\gamma} = \rho_\alpha P_{\alpha\beta} P_{\beta\gamma}
\] 
and the minimum being
\[
\begin{aligned}
J(x,s) &= \sum_{\alpha,\beta,\gamma \in \N_0^2} \nu_{\alpha\beta\gamma} 
\log\left(\frac{\nu_{\alpha\beta\gamma}}{(\pi_{1,2}\nu)_{\alpha\beta}\,\zeta_\gamma}\right)
= \sum_{\alpha,\beta,\gamma \in \N_0^2} \nu_{\alpha\beta\gamma} \log\left(\frac{P_{\beta\gamma}}{\zeta_\gamma}\right)\\
&= \sum_{\alpha,\beta,\gamma \in \N_0^2} \nu_{\alpha\beta\gamma} 
\left(- \lambda - (\lambda^\beta - \lambda^\gamma) - \sum_{\chi \in S} \lambda^\chi h^\chi_{\beta\gamma}\right) 
= - \lambda - \sum_{\chi \in S} \lambda^\chi xs^\chi,  
\end{aligned}
\]
where $(\lambda^\beta)_{\beta\in\N_0^2}$ drops out and the constraints are implemented. The Lagrange multipliers must be chosen such that
\[
(\ast) \qquad 1 = |\nu| = \sum_{\gamma \in \N_0^2} \rho_\gamma,
\qquad 
xs^\chi =  G^\chi(\nu) = \sum_{\beta,\gamma \in \N_0^2} \rho_\beta P_{\beta\gamma} h^\chi_{\beta\gamma}, \quad \chi \in S.
\]
These equations are \emph{coupled}, because $\rho_\gamma$ and $\rho_\beta P_{\beta\gamma}$ depend on $\lambda$ and $\lambda^\chi$, $\chi \in S$, which in turn depend on $(x,s)$. We think of $(\ast)$ as \emph{consistency equations}. Note that the constraint $\pi_{1,2}\nu = \pi_{2,3}\nu$ is already met because $P$ is Markov and $\rho$ is its invariant probability law.

\medskip\noindent
\emph{Step 2.}
Next, we reduce $(\ast)$ by using that not all variables are relevant. To that end, note that $h^\chi_{\beta,\gamma}$ mostly depends on $\gamma$, namely,
\[
h^\chi_{\beta\gamma} = \left\{\begin{array}{ll}
\hat{h}^\chi_\gamma, &\text{if } \beta \in \{0\} \times \N_0,\\
\bar{h}^\chi_\gamma, &\text{if } \beta \in \N \times \N_0,
\end{array}
\right. \chi \in \{-,0\},
\qquad 
h^+_{\beta\gamma} = \hat{h}^+_\gamma = \bar{h}^+_\gamma,
\]
with
\[
\begin{aligned}
\hat{h}^-_\gamma &= 2\,\mathbbm{1}_{\{c>0\}} + \mathbbm{1}_{\{c=0,c'>0\}},\\ 
\bar{h}^-_\gamma &= \mathbbm{1}_{\{c>0\}},\\[0.2cm]
\hat{h}^0_\gamma &= c-\mathbbm{1}_{\{c>0\}}-\mathbbm{1}_{\{c>1\}} + \mathbbm{1}_{\{c=0,\,c^\prime=0\}},\\ 
\bar{h}^0_\gamma &= c-\mathbbm{1}_{\{c>0\}}-\mathbbm{1}_{\{c>1\}},\\[0.2cm]
\hat{h}^+_\gamma &= \bar{h}^+_\gamma = \mathbbm{1}_{\{c>0\}} + \mathbbm{1}_{\{c>1\}}.
\end{aligned}
\]
Since
\[
\hat{h}^-_\gamma - \bar{h}^-_\gamma = \mathbbm{1}_{\{(c,c') \neq (0,0)\}}, \qquad
\hat{h}^0_\gamma - \bar{h}^0_\gamma = \mathbbm{1}_{\{(c,c') = (0,0)\}},
\]
the second part of $(\ast)$ reduces to
\[
xs^\chi = \sum_{\gamma\in\N_0^2} \rho_\gamma \bar{h}^\chi_\gamma 
+ \sum_{b'\in\N_0} \rho_{(0,b')} \times 
\left\{\begin{array}{ll} 
1 - P_{(0,b'),(0,0)}, &\text{if } \chi = -,\\
P_{(0,b'),(0,0)}, &\text{if } \chi = 0,\\
0, &\text{if } \chi = +.
\end{array}
\right.
\]
Since $h^-_{(0,b'),(0,0)} = 0$, $h^0_{(0,b'),(0,0)} = 1$, $h^+_{(0,b'),(0,0)} = 0$, we have from $(\blacksquare)$ that
\[
P_{(0,b'),(0,0)} = C_{b'} \,\ee^{-\lambda - \lambda^0} q^2, 
\qquad C_{b'} = \ee^{-(\lambda^{(0,b')} - \lambda^{(0,0)})},
\]
where we use that $\zeta(0) = 1-p = q$. It follows from $(\square)$ that
\[
\lambda^{(0,b')} = - \lambda + \log \sum_{\gamma \in \N_0^2} 
\ee^{\lambda^\gamma - \sum_{\chi \in S} \lambda^\chi \hat{h}^\chi_\gamma} \zeta_\gamma,
\]
which does not depend on $b'$. Hence, $C_{b'} = 1$ for all $b' \in \N_0$ and so the second sum in the above expression for $xs^\chi$ reduces to   
\[
\left\{
\begin{array}{ll}
\big(1 -\ee^{-\lambda-\lambda^0} q^2\big)\, \rho_0, &\text{if } \chi = -,\\
\big(\ee^{-\lambda-\lambda^0} q^2\big)\,\rho_0, &\text{if } \chi = 0,\\
0, &\text{if } \chi = +, 
\end{array}
\right.
\]
with $\rho_0 = \sum_{b' \in \N_0} \rho_{(0,b')}$. Moreover, since $\bar{h}^-_\gamma$, $\bar{h}^0_\gamma$, $\bar{h}^+_\gamma$ depend on $c$ only, we have 
\[
\sum_{\gamma\in\N_0^2} \rho_\gamma \bar{h}^\chi_\gamma = \sum_{c\in\N_0} \rho_c h^\chi_c, \quad \chi \in S,
\]
with $\rho_c = \sum_{c' \in \N_0} \rho_{(c,c')}$ and
\[
h^-_c = \mathbbm{1}_{\{c>0\}}, \qquad 
h^0_c = c-\mathbbm{1}_{\{c>0\}}-\mathbbm{1}_{\{c>1\}}, \qquad
h^+_c = \mathbbm{1}_{\{c>0\}} + \mathbbm{1}_{\{c>1\}}.
\]
Thus, the second part of $(\ast)$ further reduces to 
\[
xs^\chi = \rho_0 A^\chi_0 + \sum_{c\in\N} \rho_c h^\chi_c, \quad \chi \in S, \qquad 
A^\chi_0 = \left\{\begin{array}{ll}
1-\ee^{-\lambda-\lambda^0} q^2, &\text{if } \chi = -,\\
\ee^{-\lambda-\lambda^0} q^2, &\text{if } \chi = 0,\\
0, &\text{if } \chi = +, 
\end{array}
\right.
\]
which only involves the probability measure $\rho=(\rho_c)_{c \in \N_0}$ and the Lagrange multipliers $\lambda,\lambda^0$. Next, use $(\square)$ and $(\blacksquare)$ to see that $\lambda^{(b,b')}$ and $P_{(b,b'),(c,c')}$ do not depend on $b'$ because $h^\chi_{(b,b'),(c,c')}$, $\chi \in S$ does not. Therefore, summing the invariance relation over $c'$, we get
\[
\rho_c = \sum_{b\in\N_0} \rho_b P_{bc}
\] 
with 
\[
P_{bc} = \ee^{-\lambda-(\lambda_*^b-\lambda_*^c)} \zeta(c)
\sum_{c' \in \N_0} \ee^{- \sum_{\chi \in S} \lambda^\chi h^\chi_{(b,\cdot),(c,c')}}\,\zeta(c'),
\]
where $\lambda_*^b = \lambda^{(b,b')}$. Using the formulas for $h^\chi_{(b,\cdot),(c,c')}$, we get
\[
P_{bc} = \ee^{-\lambda-(\lambda_*^b-\lambda_*^c)} Q_c \times
\left\{\begin{array}{ll}
1, &\text{if } b>0,\\
R_c, &\text{if } b=0,\\ 
\end{array}
\right.
\qquad Q_c = \ee^{-\sum_{\chi \in S} \lambda^\chi h^\chi_c} \zeta(c), 
\]
with the abbreviation
\[
R_c = \left\{\begin{array}{ll}
\ee^{-\lambda^-}, &\text{if } c>0,\\
p\,\ee^{-\lambda^-} + q\,\ee^{-\lambda^0}, &\text{if } c=0.
\end{array}
\right.
\]
The latter arrises from the sum
\[
R_c = \sum_{c'\in\N_0} \zeta(c')\,\ee^{-\lambda^- \mathbbm{1}_{\{(c,c') \neq (0,0)\}}
-\lambda^0 \mathbbm{1}_{\{(c,c') = (0,0)\}}}.
\]

\medskip\noindent
\emph{Step 3.} 
Next, we do a reparametrisation. Put
\[
u = \ee^{-\lambda}, \qquad u^\chi = \ee^{-\lambda^\chi}, \qquad u_*^b = \ee^{-\lambda_*^b},
\]
which are elements of $(0,\infty)$, to write
\[
P_{bc} = \frac{u u_*^b}{u_*^c}\,Q_c \times
\left\{\begin{array}{ll}
1, &\text{if } b>0,\\
R_c, &\text{if } b=0,\\ 
\end{array}
\right.
\qquad Q_c = q (1-q)^c \prod_{\chi \in S} (u^\chi)^{h^\chi_c}, 
\]
where we use that $\zeta(c) = q(1-q)^c$, $c \in \N_0$. Using once more the formulas for $h^\chi_{(b,\cdot),(c,c')}$, in combination with $\sum_{c\in\N_0} P_{bc} =1$ for all $b\in\N_0$, we get 
\[
\frac{1}{u_*^b} = \left\{\begin{array}{ll}
u \sum_{c\in\N_0} \frac{1}{u_*^c}\,Q_c, &\text{if } b>0,\\[0.4cm]
u \sum_{c\in\N_0} \frac{1}{u_*^c}\, R_c\,Q_c , &\text{if } b=0.
\end{array}
\right. 
\]
It follows that $u_*^b$ does not depend on $b$ when $b>0$. Let $v$ denote the quotient of its value for $b=0$ and its value for $b>0$. Then $v$ solves the two equations
\[
(\triangle) \qquad \frac{1}{v}\,Q_0 + \sum_{c>0} Q_c = \frac{1}{u} = R_0\,Q_0 + v \sum_{c>0} R_c\,Q_c. 
\]
Moreover,
\[
P_{bc} = u \times \left\{\begin{array}{ll}
Q_c, &b,c>0,\\[0.2cm]
v\, R_c\,Q_c, &b=0,c>0,\\[0.2cm]
\frac{1}{v}\, Q_0, &b>0,c=0,\\[0.2cm]
R_0\,Q_0, &b=c=0.
\end{array}
\right.
\]
Inserting the formulas for $h^\chi_c$, $\chi \in S$, we have
\[
Q_0 = q, \qquad Q_1 = pq u^-u^+,
\qquad Q_c = q\, [p u^0]^c\, u^- \left(\frac{u^+}{u^0}\right)^2, \quad c > 1.
\]
Put $W = \sum_{c>0} Q_c$. Then
\[
(\blacktriangle_1) \qquad W = pq u^-u^+ \left[1 + \frac{pu^+}{1-p u^0}\right],
\]
and $(\triangle)$ becomes
\[
\frac{q}{v} + W = \frac{1}{u} = qR_0 + v u^-W 
\]
with $R_0 = pu^-+qu^0$. From this we find not only that
\[
\frac{1}{v} = \frac{1}{q}\left(\frac{1}{u}-W\right), \qquad v = \frac{1}{u^-\,W}\left(\frac{1}{u} - q R_0\right),
\]
but also that 
\[
(\blacktriangle_2) \qquad W = \frac{1- q R_0 u}{u[qu^-u + (1-q R_0 u)]}.
\]
Note that $(\blacktriangle_1) = (\blacktriangle_2)$ gives a relation between $u$ and $(u^\chi)_{\chi \in S}$.

\medskip\noindent
\emph{Step 4.}
The above allows us to make the consistency equations explicit. The invariance relation gives
\[
\begin{aligned}
\rho_0 &= \rho_0\,q R_0 u + (1-\rho_0)(1-Wu),\\
\rho_c &= Q_c\,\left[\rho_0\,\frac{1}{W}(1-q R_0 u) + (1-\rho_0) u\right], \quad c>0,
\end{aligned}
\] 
where we use that $\sum_{b\in\N_0} \rho_b =1$. The first equation gives a formula for $\rho_0$:
\[
\rho_0 = \frac{(1-Wu)}{(1-Wu) + (1-q R_0 u)}, \qquad \rho_c = \frac{Q_c}{W}\,(1-\rho_0), \quad c>0.
\]
These formulas allow us to express the constraints in $(\ast)$ in terms of $\rho_0$. Indeed, the second part of $(\ast)$ reads
\[
(\ast\ast) \qquad xs^\chi = \rho_0 A_0^\chi + (1-\rho_0) \Sigma^\chi, \qquad \chi \in S,
\]
with
\[
A_0^\chi = \left\{\begin{array}{ll}
1- u u^0 q^2, &\text{if } \chi = -,\\
u u^0 q^2, &\text{if } \chi = 0,\\
0, &\text{if } \chi = +, 
\end{array}
\right.
\]
where we require that $0 < u u^0 q^2 \leq 1$ (because $A^-_0$ and $A^0_0$ are probabilities), and
\[
\begin{aligned}
\Sigma^- &= (1-\rho_0)^{-1} \sum_{c>0} \rho_c = 1,\\ 
\Sigma^0 &= (1-\rho_0)^{-1} \sum_{c>1} \rho_c (c-2) =
\frac{pu^0}{1-pu^0}\,\frac{pu^+}{1-pu^0+pu^+},\\
\Sigma^+ &= (1-\rho_0)^{-1} \left[\sum_{c>0} \rho_c + \sum_{c>1} \rho_c\right] 
= 2-\frac{1-pu^0}{1-pu^0+pu^+},
\end{aligned}
\]
where in the last two lines we use $(\blacktriangle_1)$, and we require that $p u^0 < 1$ to ensure that the geometric sums used in the calculation converge. The three equations in $(\ast\ast)$ provide three constraints on the Lagrange multipliers in terms of $(xs^\chi)_{\chi \in S}$, where we note that $(\Sigma^\chi)_{\chi \in S}$ does not depend on $u,u^-$. The first part of $(\ast)$ has already been taken care of, but there is a fourth constraint coming from $(\blacktriangle_1) = (\blacktriangle_2)$. After inserting the formula for $\rho_0$, we obtain four explicit equations from which $u$ and $u^\chi$, $\chi \in S$, must be solved for given $(x,s)$. The solution determines the rate function as
\[
(\heartsuit) \qquad J(x,s) = \log u + x \sum_{\chi \in S} s^\chi \log u^\chi,
\]  
which is valid for $(x,s) \in \mathrm{int}(\cD)$.

\medskip\noindent
\emph{Step 5.}
We are finally ready to compute the rate function. Abbreviate
\[
A = pu^-+qu^0, \qquad B = q(u^--u^0), \qquad C = pu^-u^+ \left(1+\frac{pu^+}{1-pu^0}\right).
\]
The above expressions yield
\[
(\diamondsuit) \qquad \rho_0 = \frac{qu^-u}{qu^-u\,(2-quA)+(1-quA)^2}.
\] 
The relation $(\blacktriangle_1) = (\blacktriangle_2)$ leads to a rewrite of the fourth constraint as
\[
\frac{(1-quA)}{u[(1-quA)+qu^-u]} = qC.
\]
This gives the quadratic equation $0 = BC(qu)^2 + (A+C)qu-1$, whose solution is
\[
(\blacktriangle) \qquad  qu = \frac{-(A+C) + \sqrt{(A+C)^2+4BC}}{2BC}, \quad B \neq 0,  \qquad qu = \frac{1}{A+C}, \quad B=0.
\]
(We must take the positive root because $A,C \in (0,\infty)$.) The formula in $(\blacktriangle)$ expresses $u$ in terms of $(u^-,u^0,u^+)$, so that $(\ast\ast)$ provides three equations from which the three unknowns $(u^-,u^0,u^+)$ must be solved.

\medskip\noindent
(c) To show that $(x,s) \mapsto J(x,s)$ is analytic on $\mathrm{int}(\cD)$, we use the implicit function theorem \cite[Chapter II, Theorem 1.5]{B}. Note that $\mathrm{int}(\cD)$ is the set of $(x,s)$-values for which $(\ast\ast)$ has a solution in $(0,\infty)^3$. Since $(\ast\ast)$ is a set of three equations for three unknowns, it has a unique solution in the effective domain of $J$ as long as the three equations are non-redundant. But they clearly are, e.g.\ because $A_0^-,A_0^0$ depend on $uu^0$ while $A_0^+$ does not, $\Sigma^0,\Sigma^+$ depend on $u^0,u^+$ while $\Sigma^-$ does not, and $u$ depends on $u^-$. Since, in $\mathrm{int}(\cD)$, $(x,s)$ is analytic as a function of $(u^-,u^0,u^+)$, it follows that $(u^-,u^0,u^+)$ is analytic as a function of $(x,s)$. The claim is therefore immediate from $(\heartsuit)$.  

\medskip\noindent
By analyticity, $(x,s) \mapsto J(x,s)$ is strictly convex on $\mathrm{int}(\cD)$ unless it is everywhere linear on $\mathrm{int}(\cD)$. But this is excluded by the fact that it is non-negative and has a unique zero in the interior of its effective domain.     

\medskip\noindent
(d) Certain constraints on $(x,s)$ are needed to guarantee the existence of a solution of $(\ast\ast)$. Clearly, $1 < x < \infty$ (see Remark~\ref{Jx} below). As shown in Figs.~\ref{fig:plot1}--\ref{fig:plot2}, for fixed $x$ there are four linear constraints. On the interior of the simplex we have $s^-+s^0<1$, which is the first constraint. Moreover, $xs^- = 1-A^0_0\rho_0 \in (0,1)$ and $x(s^-+s^0) = 1+(1-\rho_0)\Sigma^0$, and so we also require that $s^- < 1/x$ and $s^-+s^0>1/x$, which are the second and the third constraint. Both constraints can be reached arbitrarily closely by letting $u^- \downarrow 0$ (so that $\rho_0 \downarrow 0$), respectively, $u^0 \downarrow 0$ (so that $\Sigma^0 \downarrow 0$). To get the fourth constraint we compute $3s^-+s^0$. Summing $(\ast\ast)$ over $\chi \in S$ and using that $A^-_0 + A^0_0 + A^+_0 = 1$, we get
\[
x = \rho_0 + (1-\rho_0)(\Sigma^- + \Sigma^0 + \Sigma^+).
\]
Using that $A^-_0 + A^0_0 = 1$ and $\Sigma^- = 1$ this gives
\[
3s^- + s^0 = \frac{\rho_0 (1+2A^-_0) + (1-\rho_0)(3+\Sigma^0)}{1+ (1-\rho_0)(\Sigma^0 + \Sigma^+)}.
\] 
In order for this to be $>1$, we require that 
\[
\rho_0 (1+2A^-_0) + (1-\rho_0) 3 > 1+ (1-\rho_0)\Sigma^+,
\]
which is the same as
\[
\rho_0 2A^-_0 + (1-\rho_0)(2-\Sigma^+) > 0.
\] 
But the latter is trivially satisfied, because $\Sigma^+ < 2$ and because $\rho_0 = 1$ implies $quA = 1$, i.e., $uu^0q^2 + uu^-pq=1$, and hence $A^-_0 = 1-uu^0q^2 = uu^-pq>0$. The constraint can be reached arbitrarily closely by letting $u^- \downarrow 0$ (so that $\rho_0 \downarrow 0$) and $u^+ \uparrow \infty$ (so that $\Sigma^+ \uparrow 2$). 
\end{proof}

\begin{remark}
\label{rem:lsc}
{\rm (1) We can use the above calculation to verify that the function $\theta \mapsto \bar{I}(\theta)$ introduced in the proof of Lemma~\ref{lem:LDP-type-fractions}(b) is lower semi-continuous. To see how, note that the Lagrange multiplier analysis provides a description of $\theta \mapsto \bar{I}(\theta)$ through $(x,s) \mapsto J(x,s)=\bar{I}(xs)$, and that $J$ is analytic on $\mathrm{int}(\cD)$, and hence continuous on $\mathrm{int}(\cD)$. Since $J$ is infinite on $\cD^c$, it suffices to verify that $J$ is lower semi-continuous at $\partial\cD$.\\ 
(2) The lower semi-continuity at the boundary can be verified by considering the different boundary regimes of the parameters $u,u^-,u^0,u^+$ case by case. Not all combinations are possible. In each case, we can check from $(\heartsuit)$ that the corresponding value of $J$ converges either to a finite limit or to infinity. For instance, if $u^-$ and $u^0$ are kept fixed while $u^+ \to \infty$, then, by $(\blacktriangle)$, $u \asymp (u^+)^{-2}$. Moreover, $\rho_0 \downarrow 0$, $\Sigma^- = 1$, while $\Sigma^0$ converges to a finite limit, and $\Sigma^+ \uparrow 2$. Hence, by $(**)$,
\[
xs^-\to 1, \qquad xs^0 \to \Sigma^0, \qquad xs^+\to 2.
\]
By $(\heartsuit)$, we have
\[
J(x,s) = \log u + xs^-\log u^- + xs^0\log u^0 + xs^+ \log u^+.
\]
Since $\log u+2\log u^+$ converges to a finite limit and $xs^+ -2 = O((u^+)^{-1})$, it follows that $\log u + xs^+\log u^+$ converges to a finite limit. Moreover, $u^-$ and $u^0$ are fixed, while $xs^- \to 1$ and $xs^0$ converges to a finite limit. Hence the remaining terms stay bounded, and therefore $J(x,s)$ converges to a finite limit.\\
(3) The other boundary regimes can be analysed similarly. Consequently, whenever $(x,s)$ approaches a boundary point of $\cD$ from within $\operatorname{int}(\cD)$, the value of $J(x,s)$ converges either to a finite limit or to infinity. Since $J$ is convex and finite on $\operatorname{int}(\cD)$, this boundary convergence implies that $J$ is lower semi-continuous at $\partial\cD$. Since the map $(x,s)\mapsto xs$ is a homeomorphism from $(0,\infty) \times \mathcal{S}_2$ onto $F_0$, the lower semi-continuity of $J$ on $(0,\infty) \times \mathcal{S}_2$ implies the lower semi-continuity of $\bar I$ on $F_0$. Since $\bar{I} = \infty$ on $\R^3\setminus F_0$, the above boundary analysis shows that $\bar{I}$ is lower semi-continuous on all of $\R^3$.}
\end{remark}

\begin{remark}
\label{rem:numcomp}
{\rm Even though it is difficult to invert $(\ast\ast)$ to compute $(u^\chi)_{\chi \in S}$ as a function of $(x,(s^\chi)_{\chi \in S})$ (recall that $\sum_{\chi \in S} s^\chi = 1$), it is easy to compute $(x,(s^\chi)_{\chi \in S})$ as a function on $(u^\chi)_{\chi \in S}$. Hence $(\ast\ast)$ provides a way to \emph{numerically} compute $J(x,s)$ for a range of values $(x,s)$, simply by letting $(u^\chi)_{\chi \in S}$ run through $(0,\infty)^3$, and for each choice computing $(x,s)$ via $(\ast\ast)$ and $J(x,s)$ via $(\heartsuit)$. This explains why the above computation is helpful to \emph{quantify} the rate function. See Figs.~\ref{fig:plot1}--\ref{fig:plot2} for an illustration.}
\end{remark}

\begin{remark}
{\rm Note that the choice $u=1$ and $u^\chi=1$, $\chi \in S$ (i.e., all Lagrange multipliers are zero) yields
\[
A = 1, \qquad B = 0, \qquad C = \frac{p}{q},
\]
and hence
\[
\Sigma^- = 1, \qquad \Sigma^0 = \frac{p^2}{q}, \qquad \Sigma^+ = 1+p, \qquad
A_0^\chi = \left\{\begin{array}{ll}
1- q^2, &\text{if } \chi = -,\\
q^2, &\text{if } \chi = 0,\\
0, &\text{if } \chi = +, 
\end{array}
\right.
\qquad \rho_0 = q.
\]
This corresponds to 
\[
xs^- = q(1-q^2) + p, \qquad xs^0 = q^3 + \frac{p^3}{q}, \qquad xs^+ = p(1+p),
\]
which is the unique minimiser $(x^*,s^*)$.}
\end{remark}  

\begin{remark}
\label{Jx}
{\rm It is easy to compute $J(x) = \inf_{s \in S} J(x,s)$, which is the rate function of the sequence $\{N_j/j\}_{j \in \N_4}$ (recall Section~\ref{sec:LDP}). Indeed, this amounts to choosing $\lambda^\chi = \bar\lambda$, $\chi \in S$, in the Lagrangian in Step 1, which means that the last term in the Lagrangian becomes $\bar\lambda(\|G(\nu)\|_1-x)$. Putting $\bar{u} = \ee^{-\bar\lambda}$, and summing $(\ast\ast)$ over $\chi \in S$, we find that $x = 1 + (1-\rho_0) \frac{1}{1-p\bar{u}}$, where we use that $\sum_{\chi \in S} A_0^\chi = 1$ and $\sum_{\chi \in S} \Sigma^\chi = 2 + \frac{p\bar{u}}{1-p\bar{u}}$. Since $A=\bar{u}$, $B=0$, $C = \bar{u}\,\frac{p\bar{u}}{1-p\bar{u}}$, we get from $(\diamondsuit)$ that $\rho_0 = qu\bar{u}$, and from $(\blacktriangle)$ that $qu\bar{u} = 1-p\bar{u}$. Hence $x = \frac{1}{1-p\bar{u}}$, and so $\bar{u} = \frac{x-1}{px}$. Moreover, $u = \frac{1-p\bar{u}}{q\bar{u}} = \frac{p}{q(x-1)}$. Consequently,
\[
J(x) = \log\left(\frac{p}{q(x-1)}\right) + x \log\left(\frac{x-1}{px}\right)
= (x-1) \log\left(\frac{x-1}{px}\right) + \log\left(\frac{1}{qx}\right), \qquad x \in (1,\infty).
\]   
Note that $J'(x) = \log(\frac{x-1}{px})$ and $J''(x) = \frac{1}{x(x-1)}$. Hence, $J$ has a unique zero at $x_* = \frac{1}{q}$, and is strictly convex. Moreover, $\lim_{x \downarrow 1} J(x) = \log(1/q)$.}
\end{remark}



\newpage

\begin{figure}[htbp]
\hspace{1cm}
\includegraphics[scale=0.25]{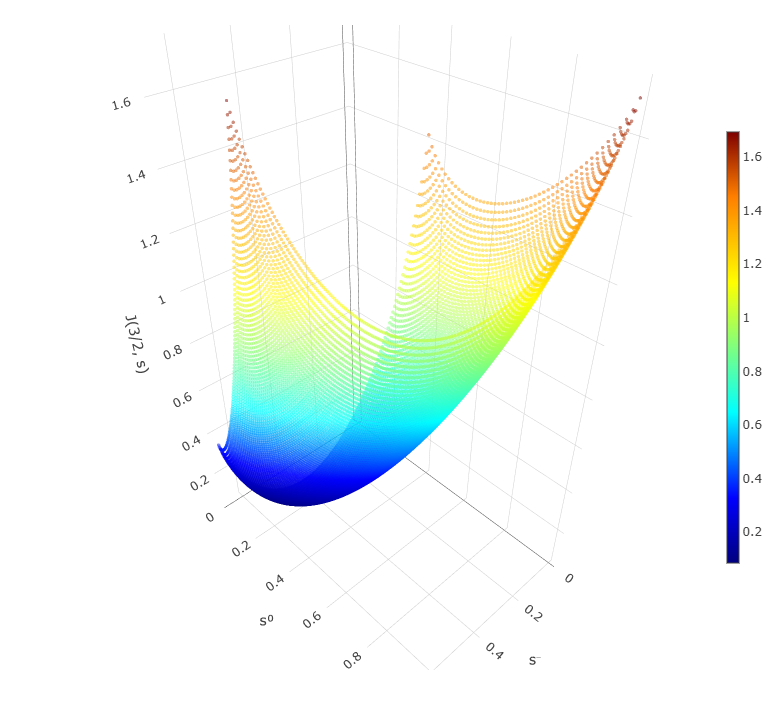}
\hspace{1cm}
\includegraphics[scale=0.4]{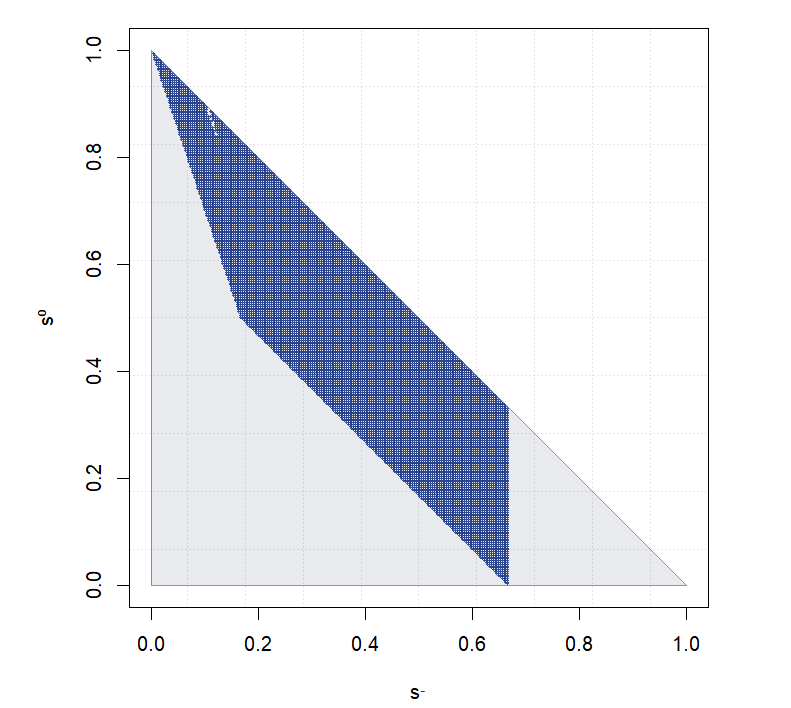}

\hspace{1cm}
\includegraphics[scale=0.25]{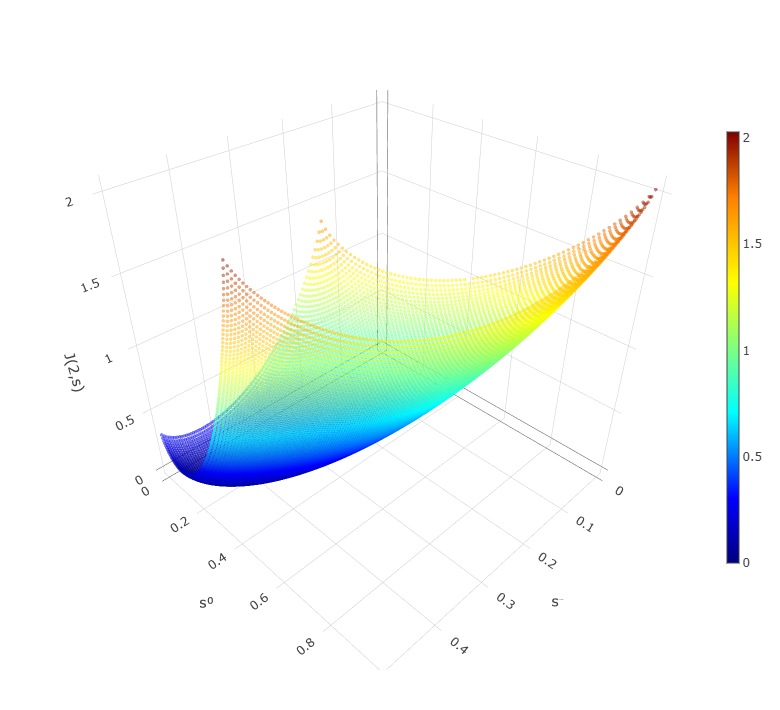}
\hspace{1cm}
\includegraphics[scale=0.4]{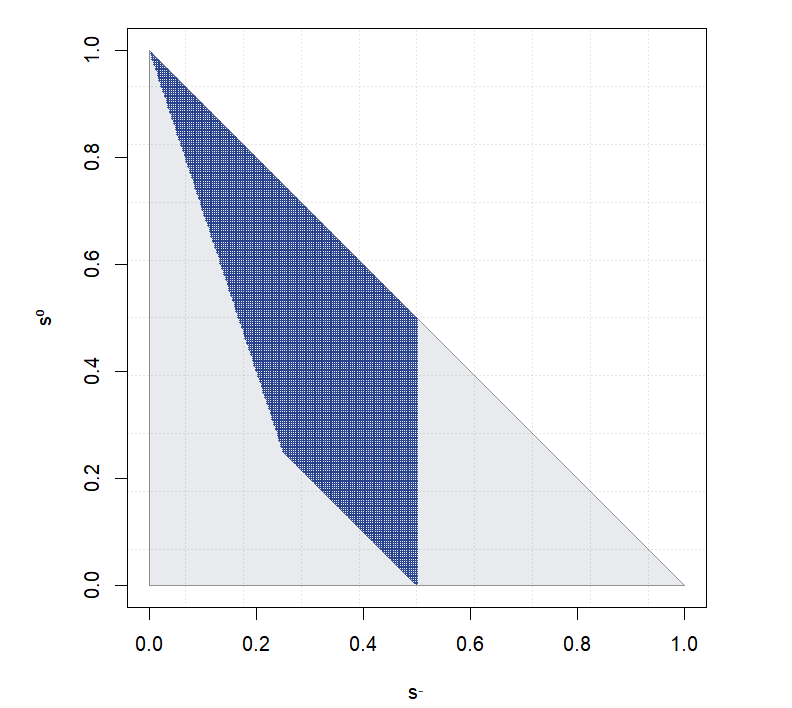}

\hspace{1cm}
\includegraphics[scale=0.25]{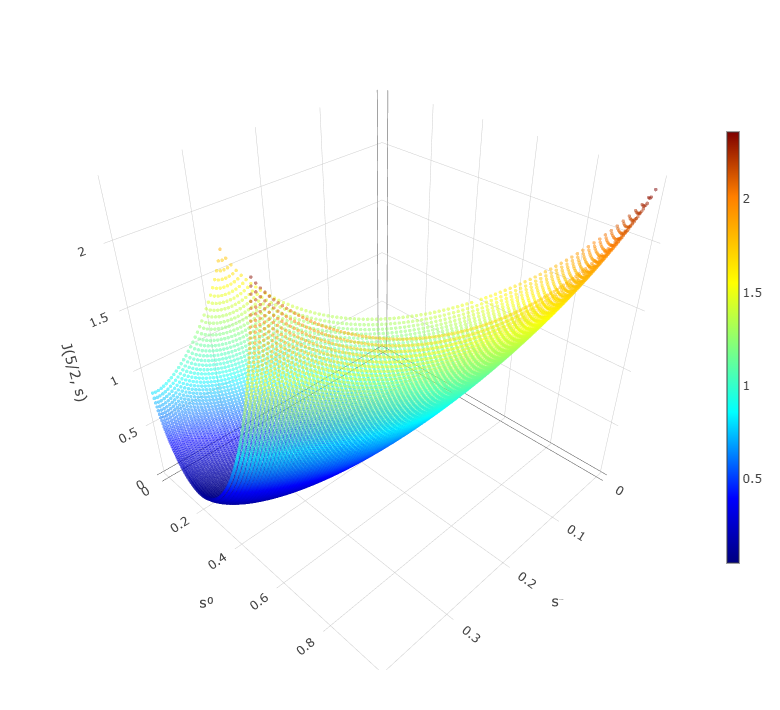}
\hspace{1cm}
\includegraphics[scale=0.4]{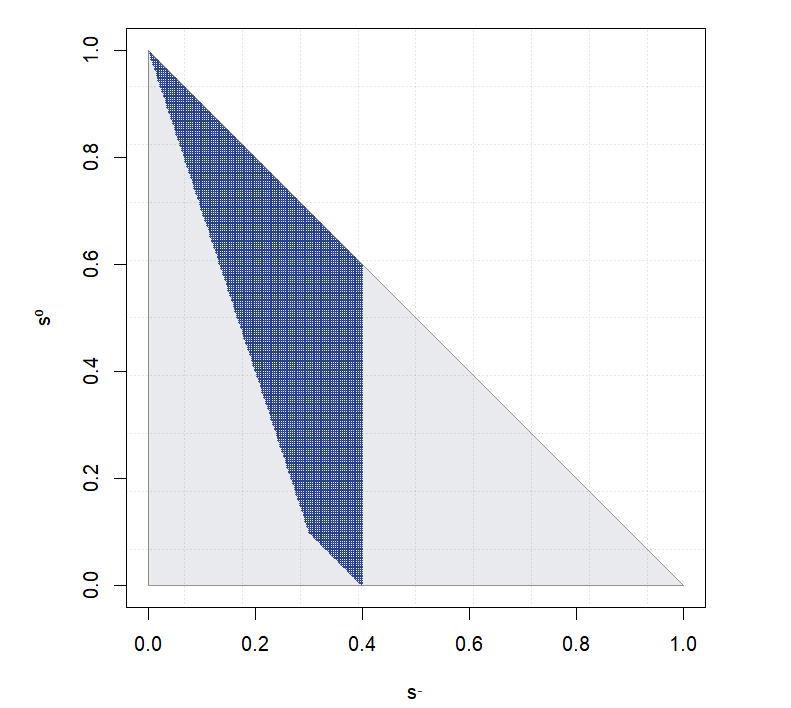}
\caption{\small Numerical plots for $p=\tfrac12$ and $x=\tfrac32$, $x=2$, $x=\tfrac52$, respectively. \emph{Left}: Plot of $s \mapsto J(x,s)$. The height represents the value of $J(x,s)$, the plane represents the simplex $s=(s^-,s^0,1-s^--s^0)$. \emph{Right}: The dotted region is the effective domain of $s \mapsto J(x,s)$ in the simplex. \emph{Comments}: (1) Since $x^*=2$ when $p=\tfrac12$, the middle left figure achieves a unique zero at $s=(s^-,s^0,s^+) = (\tfrac{7}{16},\tfrac{3}{16},\tfrac{6}{16})$; (2) The upper cut-offs $s^-+s^0 < 1$, $s^-<1/x$ and the lower cut-offs $s^-+s^0>1/x$, $3s^-+s^0 > 1$ are clearly visible in the right figures.}
\label{fig:plot1}
\end{figure}

\newpage

\begin{figure}[htbp]
\hspace{1cm}
\includegraphics[scale=0.25]{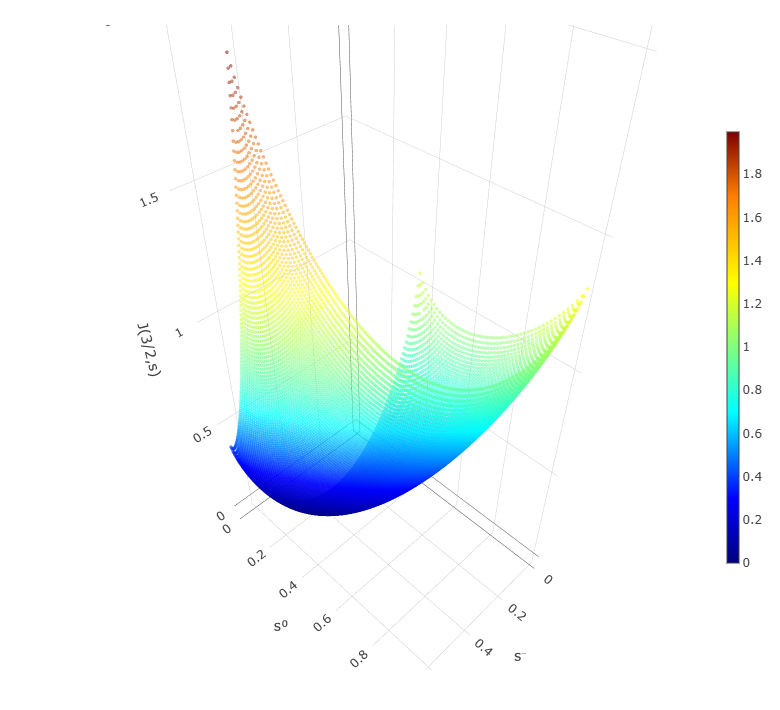}
\hspace{0.9cm}
\includegraphics[scale=0.42]{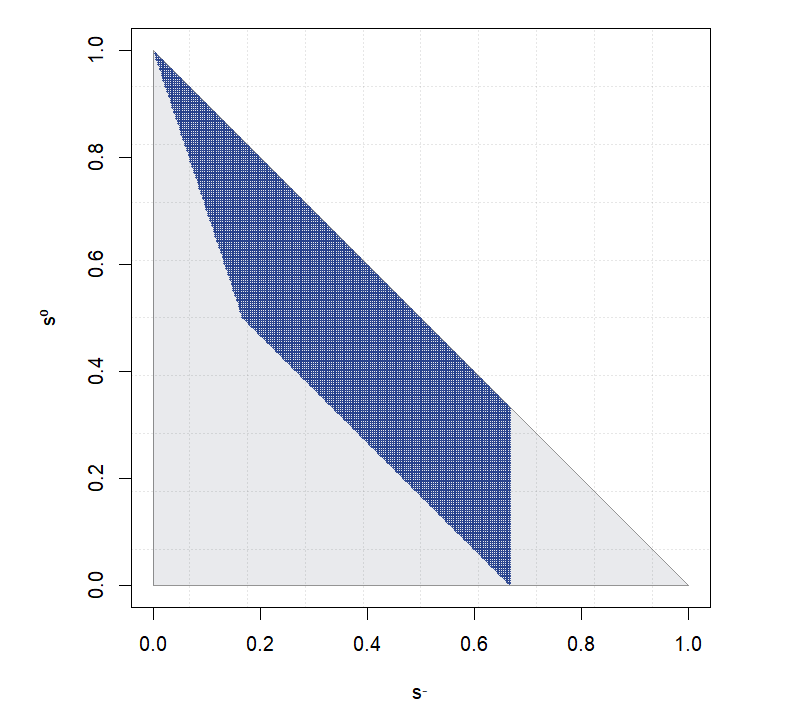}

\hspace{1cm}
\includegraphics[scale=0.25]{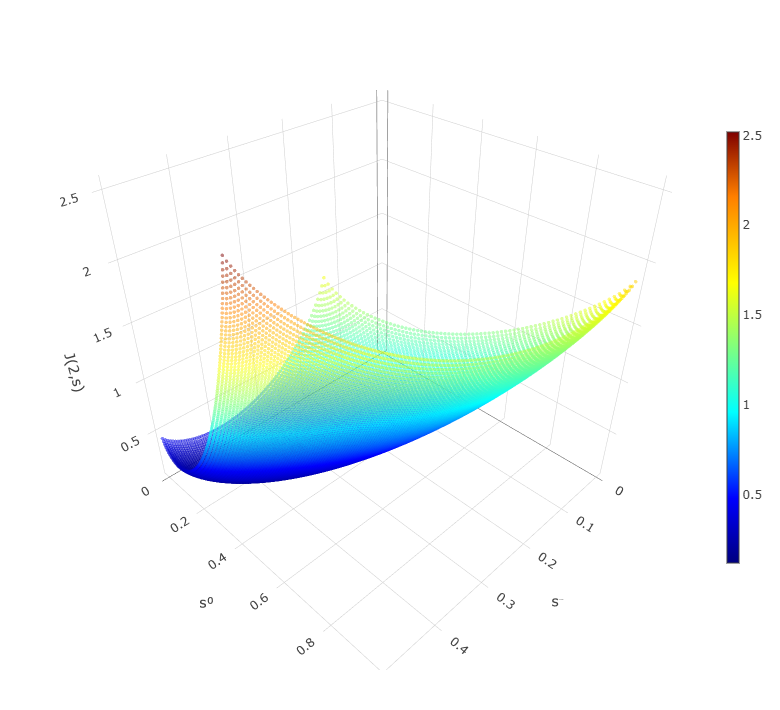}
\hspace{0.85cm}
\includegraphics[scale=0.42]{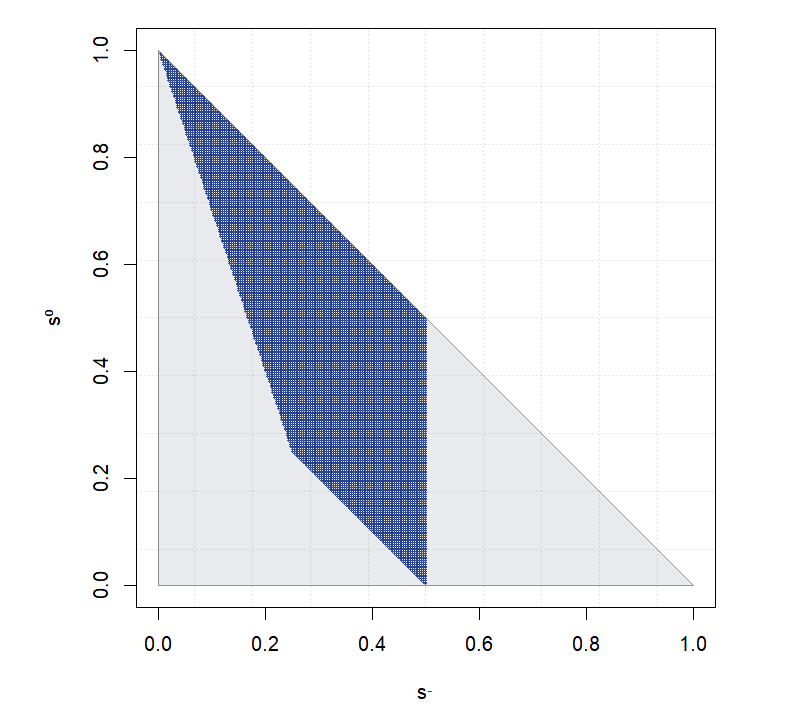}

\hspace{1cm}
\includegraphics[scale=0.25]{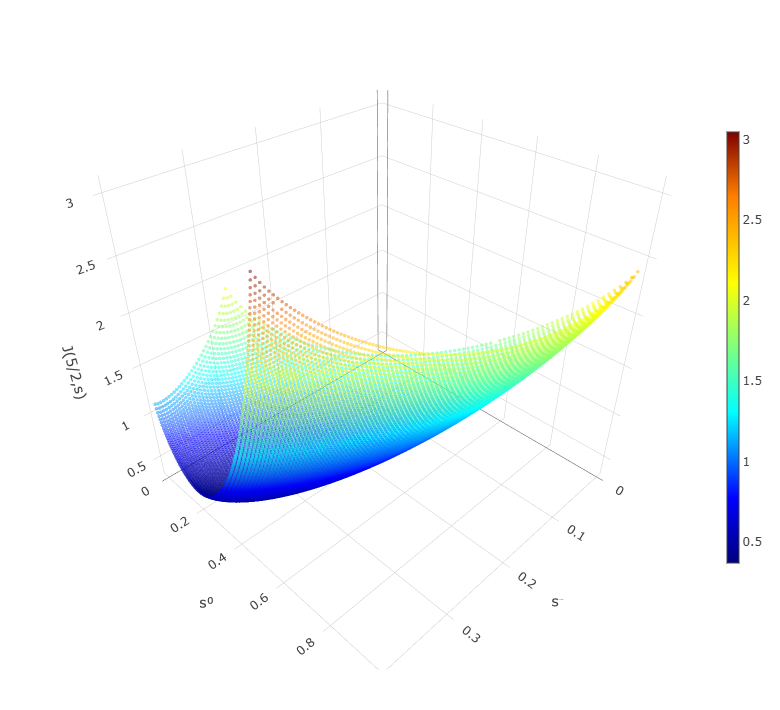}
\hspace{0.9cm}
\includegraphics[scale=0.42]{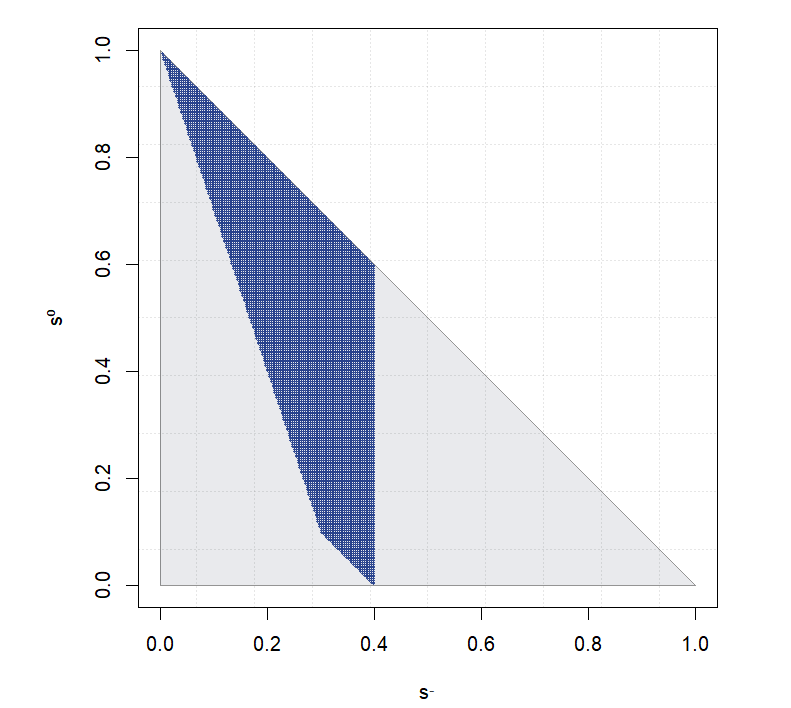}
\caption{\small Same as in Fig.~\ref{fig:plot1}, but with $p=\tfrac13$. Note that the right figures are the same as in Fig.~\ref{fig:plot1}, i.e., the support does not depend on $p$.}
\label{fig:plot2}
\end{figure}


\end{document}